\let\mathcal\mathscr
\def\bC{{\mathbb C}}
\def\bR{{\mathbb R}}
\def\bP{{\mathbb P}}
\def\bN{{\mathbb N}}
\def\bQ{{\mathbb Q}}
\newtheorem{thm}{Theorem}[section]
\def\Q{{\bf Q}}
\def\codim{\mathop{\rm codim}\nolimits}
\def\NAmp{\mathop{\rm NAmp}\nolimits}
\def\Supp{\mathop{\rm Supp}\nolimits}
\def\tilde{\widetilde}
\def\phi{\varphi}
\def\cF{{\mathcal F}}
\def\cG{{\mathcal G}}
\numberwithin{equation}{section}
\newtheorem{theorem}[thm]{Theorem}
\newtheorem*{thma}{Theorem A}
\newtheorem*{thmb}{Theorem B}
\newtheorem*{thmc}{Theorem C}
\newtheorem*{thmf}{Theorem F}
\newtheorem{claim}[thm]{Claim}
\newtheorem{conjecture}[thm]{Conjecture}
\newtheorem{corollary}[thm]{Corollary}
\newtheorem*{cord}{Corollary D}
\newtheorem*{core}{Corollary E}
\newtheorem{definition}[thm]{Definition}
\newtheorem{example}[thm]{Example}
\newtheorem{lemma}[thm]{Lemma}
\newtheorem{proposition}[thm]{Proposition}
\newtheorem{prop}[thm]{Proposition}
\newtheorem{question}[thm]{Question}
\newtheorem{remark}[thm]{Remark}
\begin{document}

\title {Higher dimensional tautological inequalities and applications}
\author{Carlo Gasbarri, Gianluca Pacienza and Erwan Rousseau}
\date{}
\thanks{G. P. is partially supported by the project CLASS (ANR-2010-JCJC-0111-01) of the Agence Nationale de la Recherche.}
\thanks{E. R. is partially supported by project COMPLEXE (ANR-08-JCJC-0130-01) of the Agence Nationale de la Recherche.}
\keywords{Foliations; canonical singularities; Ahlfors currents; tautological inequalities; algebraic degeneracy; Green-Griffiths-Lang conjecture}
\subjclass[2000]{32H25, 32H30, 32S65.}

\begin{abstract}
We study the degeneracy of holomorphic mappings tangent to holomorphic foliations on projective manifolds. Using Ahlfors currents in higher dimension, we obtain several strong degeneracy statements.
\end{abstract}

\maketitle

\tableofcontents

\section{Introduction}
In the last decades, many efforts have been done to understand the geometry of subvarieties of varieties of general type. One of the main motivation is the fascinating conjectural relation between analytic aspects and arithmetic ones. On the geometric side, the philosophy (Green-Griffiths, Lang, Vojta, Campana) is that positivity properties of the canonical bundle of a projective manifold should impose strong restrictions on its subvarieties.

One of the first striking results is the following theorem of Bogomolov \cite{Bo} for surfaces.
\begin{theorem}[Bogomolov]
There are only finitely many rational and elliptic curves on a surface of general type with $c_1^2 > c_2$.
\end{theorem}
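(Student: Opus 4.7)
The plan is to exploit the bigness of the cotangent sheaf $\Omega^1_S$ via a tautological-inequality argument on its projectivization.

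First, I would apply Hirzebruch-Riemann-Roch to obtain
$$\chi(S, \Sym^m \Omega^1_S) = \frac{m^3}{6}(c_1^2 - c_2) + O(m^2),$$
and combine it with the general-type hypothesis on $S$: Serre duality plus the bigness of $K_S$ force $h^2(S, \Sym^m \Omega^1_S) = O(m^2)$. The hypothesis $c_1^2 > c_2$ then yields $h^0(S, \Sym^m \Omega^1_S) \gtrsim m^3$; in particular, fixing an ample line bundle $A$ on $S$, for $m \gg 0$ one finds a non-zero section
$$\omega \in H^0(S, \Sym^m \Omega^1_S \otimes A^{-1}),$$
equivalently a non-zero section of $\cO_{\P(\Omega^1_S)}(m) \otimes \pi^*A^{-1}$ on the projectivized cotangent bundle $\pi \colon \P(\Omega^1_S) \to S$, whose zero locus I denote by $D$.

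Next comes the tautological step. For a smooth curve $C \subset S$, let $\tilde C \subset \P(\Omega^1_S)$ be its canonical lift obtained from the tangent directions along $C$; one has $\cO(1)|_{\tilde C} = \Omega^1_C$, hence
$$\deg\bigl((\cO(m) \otimes \pi^*A^{-1})|_{\tilde C}\bigr) = m(2g(C) - 2) - A \cdot C,$$
which is strictly negative as soon as $g(C) \le 1$. A line bundle of negative degree has no non-zero sections, so $\omega|_{\tilde C} \equiv 0$, meaning $\tilde C \subset D$. Every rational and every elliptic curve in $S$ therefore lifts into the fixed divisor $D$. (For singular $C$ one works with the normalization and its derivative, which only lowers the relevant degree.)

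Finally, I would decompose $D$ into its irreducible components. Components that project to proper curves of $S$ under $\pi$ account for only finitely many $C$'s. Each remaining component dominates $S$ and, after saturation, defines a (possibly singular) holomorphic foliation $\cF_i$ on $S$; any rational or elliptic curve whose lift lies in such a component must then be an algebraic leaf of $\cF_i$. The main obstacle is thus the final statement that on a general-type surface, each such foliation carries only finitely many algebraic leaves of geometric genus $\le 1$. This is the deep part, due to Bogomolov (and later sharpened by Miyaoka and McQuillan): one combines the general-type hypothesis on $K_S$ with Baum-Bott and Miyaoka-Yau-type inequalities for $\cF_i$, showing that an infinite family of non-hyperbolic invariant curves would produce a pseudo-effective class contradicting the positivity of $K_S$.
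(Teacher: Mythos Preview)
Your overall architecture matches the paper's one-sentence sketch: $c_1^2>c_2$ makes $\Omega^1_S$ big, every rational or elliptic curve lifts into a fixed divisor in $\bP(\Omega^1_S)$, and the horizontal components of that divisor define foliations whose low-genus algebraic leaves must then be controlled. The paper does not actually prove this theorem---it is quoted as background---but it does name the correct ingredient for the final step, and that is precisely where your proposal goes vague.

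The finiteness of algebraic leaves is not obtained via Baum--Bott or Miyaoka--Yau inequalities, and attributing this step to ``Bogomolov'' is essentially circular. The tool the paper points to is Jouanolou's theorem: a holomorphic foliation on a smooth projective surface either has only finitely many compact invariant curves, or it admits a rational first integral. In the first alternative each $\cF_i$ contributes only finitely many curves and you are done. In the second, $\cF_i$ is tangent to a fibration $S\dashrightarrow\bP^1$; since $S$ is of general type the generic fibre has genus $\geq 2$ (otherwise $S$ would be ruled or properly elliptic), so again only finitely many fibres can contribute rational or elliptic components. Your sentence about ``an infinite family of non-hyperbolic invariant curves producing a pseudo-effective class contradicting the positivity of $K_S$'' does not describe the actual mechanism and is not a proof as written.
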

In this theorem, the hypothesis $c_1^2 > c_2$ ensures that the cotangent bundle is big, so that rational and elliptic curves are shown to be leaves of a foliation and then, one can use results on algebraic leaves of foliations \cite{Jou}.

Two decades later, this result was extended to transcendental leaves of foliations by McQuillan \cite{McQ0}.
\begin{thm}[McQuillan] Let $X$ be a surface of general type and $\cF$ a holomorphic foliation on $X$. Then $\cF$ has no entire leaf which is Zariski dense. 
\end{thm}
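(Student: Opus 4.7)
The plan is to argue by contradiction: assume there exists an entire curve $f\colon\bC\to X$ tangent to $\cF$ with Zariski dense image. First I would attach to $f$ an Ahlfors--Nevanlinna closed positive $(1,1)$-current $T[f]$ on $X$, obtained as a weak limit, along a suitable sequence $r_n\uparrow\infty$ produced by Ahlfors's logarithmic derivative lemma, of the integration currents $[f(\bD_r)]$ renormalized by the Nevanlinna area $T_f(r)$. The Zariski density assumption guarantees that the cohomology class $\{T[f]\}$ is nef: for any irreducible curve $C\subset X$ the preimage $f^{-1}(C)\subsetneq\bC$, so its contribution to $\{T[f]\}\cdot C$ is nonnegative in the limit.

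The decisive ingredient is McQuillan's tautological inequality for entire curves tangent to foliations. After replacing $X$ by a resolution $\pi\colon\tilde X\to X$ of the singular locus of $\cF$ so that the lifted foliation $\tilde\cF$ has only Seidenberg-reduced singularities, and lifting $f$ to $\tilde f\colon\bC\to\tilde X$ (still Zariski dense by birationality of $\pi$), one would establish
\begin{equation*}
T[\tilde f]\cdot K_{\tilde\cF}\le 0,
\end{equation*}
where $K_{\tilde\cF}=\det T^\ast_{\tilde\cF}$ is the canonical class of the foliation. The heuristic is that since $\tilde f'$ factors through $T_{\tilde\cF}$, Bogomolov's classical tautological inequality against $T^\ast_X$ refines to one against $T^\ast_{\tilde\cF}$. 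The most delicate point, and the main obstacle, is the careful analysis at the reduced singularities of $\tilde\cF$: one must control the correction terms produced by the fact that $\tilde f$ can approach such singularities, and these error terms are precisely what makes McQuillan's refinement of Brunella's tautological inequality nontrivial.

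To conclude, I would invoke the Brunella--McQuillan--Miyaoka birational classification of foliations on surfaces. Since $X$ is of general type, foliations by rational or elliptic curves are excluded: by Jouanolou's theorem they would come from a fibration, but a rational pencil would force $X$ uniruled, while an elliptic fibration cannot admit a Zariski dense entire leaf inside its fibers. In the remaining cases the classification forces $K_{\tilde\cF}$ to be big. Writing $K_{\tilde\cF}=A+E$ with $A$ ample and $E$ effective, one has $\{T[\tilde f]\}\cdot A>0$ (as $\{T[\tilde f]\}$ is a nonzero nef class, being the limit of unit-mass positive currents) and $\{T[\tilde f]\}\cdot E\ge 0$ (by Zariski density of $\tilde f$), whence $T[\tilde f]\cdot K_{\tilde\cF}>0$. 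This contradicts the tautological inequality and proves the theorem.
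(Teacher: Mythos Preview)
The paper does not prove this theorem; it is quoted in the introduction as McQuillan's result \cite{McQ0} and serves purely as background motivation for the higher-dimensional Theorems~A--F. There is thus no proof in the paper against which to compare your proposal.

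That said, your sketch contains a genuine gap in its concluding step. You assert that once rational and elliptic fibrations are ruled out, the birational classification of foliated surfaces forces $K_{\tilde\cF}$ to be big. This is false: Hilbert modular foliations live on surfaces of general type yet have $K_{\tilde\cF}$ numerically trivial, and more generally foliations with $\kappa(K_{\tilde\cF})\in\{0,1\}$ occur on surfaces of general type. One could try to salvage the argument via Brunella's later uniformization theorem (pseudoeffective $K_\cF$ and non-fibration implies hyperbolic leaves), but that is a different and deeper input than the classification you invoke, and historically posterior to McQuillan's theorem.

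McQuillan's actual route does not pass through bigness of $K_{\tilde\cF}$. After the tautological inequality $T[\tilde f]\cdot K_{\tilde\cF}\le 0$, the technical heart of \cite{McQ0} is a second estimate on the \emph{conormal} bundle, namely $T[\tilde f]\cdot N^*_{\tilde\cF}\le 0$; this is where the delicate local analysis at reduced singularities really bites. Adding the two via $K_{\tilde X}=K_{\tilde\cF}\otimes N^*_{\tilde\cF}$ yields $T[\tilde f]\cdot K_{\tilde X}\le 0$, and \emph{now} bigness of $K_X$ (general type) gives the contradiction exactly as in your final paragraph. So the missing ingredient in your outline is the conormal inequality, and the contradiction is with $K_X$ big, not $K_{\tilde\cF}$ big.
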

As a consequence he obtains the following. 
\begin{corollary}[McQuillan]
On a surface $X$ of general type with $c_1^2 > c_2$, there is no entire curve $f: \bC \to X$ which is Zariski dense.
\end{corollary}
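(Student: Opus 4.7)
The plan is to derive the corollary by combining the theorem of Bogomolov (which under the hypothesis $c_1^2>c_2$ produces many symmetric differentials) with McQuillan's theorem on entire leaves of foliations on surfaces of general type.

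First I would recall that by Bogomolov's classical result, the hypothesis $c_1^2(X)>c_2(X)$ implies that $\Omega^1_X$ is big. Concretely, for some $m\ge 1$ and some ample line bundle $A$ on $X$, there exists a nonzero section
\[
\omega\in H^0\bigl(X,\Sym^m\Omega^1_X\otimes A^{-1}\bigr).
\]
Such an $\omega$ can be viewed fiberwise as a homogeneous polynomial of degree $m$ on $T_X$, and since $T_X$ has rank $2$, this polynomial factors into $m$ linear forms over the algebraic closure. Globalizing this factorization gives a multi-section $Z(\omega)\subset\bP(T_X)$ of the projection $\bP(T_X)\to X$, whose irreducible components (after passing to a finite cover if necessary) define one or several holomorphic foliations $\cF_i$ on $X$.

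Next, let $f:\bC\to X$ be an entire curve, which I may assume to be Zariski dense (otherwise the conclusion is trivial). The second ingredient is a tautological/vanishing statement of Bogomolov--McQuillan type: any symmetric differential of degree $m$ twisted by the inverse of an ample bundle vanishes along any entire curve, i.e.\ $f^*\omega\equiv 0$. Therefore the lift $f':\bC\to\bP(T_X)$ of $f$ factors through the hypersurface $Z(\omega)$ constructed above. Choosing an irreducible component of $Z(\omega)$ containing the image of $f'$ and descending to $X$ (possibly after a finite étale base change, which does not affect Zariski density of an entire curve), we conclude that $f$ is an entire leaf of one of the foliations~$\cF_i$.

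Finally, since $X$ is of general type, McQuillan's theorem applies to the pair $(X,\cF_i)$ and forces $f$ to have non-Zariski-dense image, contradicting our assumption. Hence no such Zariski-dense entire curve exists.

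The main obstacle in this plan is step two, namely the construction of the foliation(s) from the symmetric differential $\omega$ and the verification that the entire curve $f$ is actually tangent to one of them rather than merely satisfying $f^*\omega=0$ in some weak sense. This requires handling the branching of $Z(\omega)\to X$ and the possibility that the factorization is only defined on a cover, which is why the passage to an étale cover (harmless for the question of Zariski density) is the technical core of the argument.
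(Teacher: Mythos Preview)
The paper does not give its own proof of this corollary; it is quoted in the introduction as a known result of McQuillan, with only the one-line indication that ``$c_1^2>c_2$ ensures that the cotangent bundle is big, so that [curves] are shown to be leaves of a foliation.'' Your proposal correctly unpacks this indication into the standard argument and is essentially the intended derivation.

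One comment on what you flag as the ``main obstacle'': the passage to an \'etale cover and the fibrewise factorization of $\omega$ into linear forms can be avoided. The cleaner route is to take the Zariski closure $Y\subset\bP(T_X)$ of the image of the canonical lift $f'$; since $f$ is Zariski dense in $X$, the projection $Y\to X$ is generically finite, so $Y$ is a surface of general type. On a resolution $\tilde Y$ of $Y$ the restriction of the tautological inclusion $\cO_{\bP(T_X)}(-1)\hookrightarrow\pi^*T_X$ saturates to a rank-one subsheaf of $T_{\tilde Y}$, i.e.\ a foliation, to which the lift of $f$ is tangent by construction; McQuillan's theorem then applies on $\tilde Y$. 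This sidesteps the branching issues you mention.
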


It is of course of great interest to generalize these results, even partially, to higher dimension. On the algebraic side, this was investigated by Lu and Miyaoka in \cite{Lu-Mi}.
\begin{theorem}[Lu-Miyaoka]
Let $X$ be a nonsingular projective variety. If $X$ is of general type, then $X$ has only a finite number of nonsingular codimension-one subvarieties having pseudoeffective anticanonical divisor. In particular, $X$ has only a finite number of nonsingular codimension-one Fano, Abelian, and Calabi-Yau subvarieties.
\end{theorem}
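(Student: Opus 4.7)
The strategy is to combine adjunction with the bigness of $K_X$ to force numerical negativity of the normal bundle $N_{Y/X}$, deduce rigidity of $Y$ in its linear system, and conclude finiteness via the geometry of the augmented base locus of $K_X$.

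First, adjunction gives $K_Y = (K_X + Y)|_Y$, so the pseudoeffectivity of $-K_Y$ yields, for any ample class $H$ on $X$,
\[
(K_X + Y) \cdot Y \cdot H^{n-2} \leq 0.
\]
Since $K_X$ is big, I write $mK_X \equiv A + E$ with $A$ ample and $E$ an effective $\bQ$-divisor. For any prime $Y \not\subset \Supp(E)$, the effectivity of $E|_Y$ yields $mK_X \cdot Y \cdot H^{n-2} \geq A \cdot Y \cdot H^{n-2} > 0$, so combining the two inequalities I obtain
\[
Y^2 \cdot H^{n-2} < 0.
\]
Thus $N_{Y/X}$ has negative degree with respect to the ample class $H|_Y$, and in particular $h^0(Y, N_{Y/X}) = 0$. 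From the short exact sequence $0 \to \cO_X \to \cO_X(Y) \to \cO_Y(N_{Y/X}) \to 0$ it then follows that $h^0(X, \cO_X(Y)) = 1$: each admissible $Y$ is the unique effective representative of its linear equivalence class.

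To convert this rigidity into finiteness, I would show that every admissible $Y$ lies in the augmented base locus $\bB_+(K_X)$. Since $K_X$ is big, $\bB_+(K_X)$ is a proper Zariski-closed subvariety of $X$ with only finitely many divisorial components, so the desired conclusion follows (the finitely many components of $\Supp(E)$ absorb the remaining case $Y \subset \Supp(E)$). By the characterization of Boucksom--Broustet--Pacienza (a proper subvariety $V$ lies in $\bB_+(L)$ precisely when $L|_V$ fails to be big), this step reduces to showing that $K_X|_Y$ is not big; via the adjunction identity $K_X|_Y = K_Y - N_{Y/X}$, together with the pseudoeffectivity of $-K_Y$ and the numerical negativity of $N_{Y/X}$ established above, one expects to rule out bigness.

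The main obstacle is precisely this last step: converting the numerical inequality $Y^2 \cdot H^{n-2} < 0$ into genuine non-bigness of $K_X|_Y$. The preceding manipulations are formal intersection theory, but controlling the volume of $K_X|_Y$ simultaneously through $K_Y$ (anti-pseudoeffective) and $-N_{Y/X}$ (numerically positive in low intersection degrees) requires delicate reasoning, likely passing through the divisorial Zariski decomposition of $K_X$ in the sense of Boucksom--Nakayama or a Nakamaye-type volume estimate, and this is where the substantive geometric content of the theorem resides.
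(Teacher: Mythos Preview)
The paper does not prove this theorem; it is quoted in the introduction as a result of Lu and Miyaoka \cite{Lu-Mi}, to motivate the higher-dimensional problems treated later. So there is no proof in the paper to compare against.

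On your proposal itself: the rigidity part (through $h^0(X,\cO_X(Y))=1$) is correct, but the strategy of concluding finiteness via the inclusion $Y\subset\NAmp(K_X)$ cannot work. The obstacle you flag is not a technicality---the desired inclusion is simply false in general. Take $X\subset\bP^3$ a smooth quintic surface containing a line $L$. Then $K_X=\cO_X(1)$ is ample, so $\NAmp(K_X)=\emptyset$; yet $L\cong\bP^1$ has $-K_L$ ample (hence pseudoeffective), so $L$ is admissible but certainly not contained in $\NAmp(K_X)$. Equivalently, $K_X|_L=\cO_L(1)$ is big on $L$, directly contradicting the non-bigness you hoped to establish. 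The numerical negativity $Y^2\cdot H^{n-2}<0$ gives no control over the bigness of $-N_{Y/X}$ on $Y$, and hence none over $K_X|_Y=K_Y-N_{Y/X}$.

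Rigidity alone is not enough: one still needs to rule out infinitely many isolated points spread over infinitely many components of the Hilbert scheme. The actual argument in \cite{Lu-Mi} supplies the missing ingredient, a uniform \emph{degree bound} $Y\cdot H^{n-1}\le C(X,H)$, obtained from Miyaoka's generic semipositivity of $\Omega_X$ (valid since $K_X$ is pseudoeffective). Bounded degree forces the admissible $Y$ into finitely many components of the Hilbert scheme, and your rigidity step then shows each such $Y$ is an isolated point of its component, giving finiteness. Your outline contains the easier half of this argument but replaces the semipositivity input by a base-locus claim that fails.
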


This can be seen as a generalization to higher dimension of the aforementioned theorem of Bogomolov.

In this paper, we would like to study some generalizations of McQuillan's result. The study of several variables holomorphic maps into $X$ in relation with the hyperbolicity properties of $X$ has been considered in \cite{PaRou}, where the extension to this larger framework of the Demailly-Semple jet-spaces technology was given, together with several applications. 
Here, for $1\leq p\leq n-1$, we consider holomorphic mappings $f: \bC^{p} \to X$ of generic maximal rank into a projective manifold of dimension $n$, such that the image of $f$ is tangent to a holomorphic foliation $\mathcal{F}$ on $X$. We obtain several results of algebraic degeneracy in the strong sense (i.e. the existence of a proper closed subset of $X$ containing all such maps). In order to state our results we need to recall that if $L$ is a big line bundle on a projective variety $X$, the {\it non-ample locus} $\NAmp(L)$ is defined as follows 
$$
\NAmp(L):=\bigcap_{L\sim_\bQ A+E} \Supp(E)
$$
where the intersection runs over all the possible decompositions (up to $\bQ$-linear equivalence) of $L$ into the sum of an ample and an effective divisor. This locus (which is also called the {\it augmented base locus}) has the following property (cf. \cite{elmnp}):

\begin{equation}\label{eq:empty}
\NAmp(L)=\emptyset \Longleftrightarrow L\ \  {\textrm {is ample}} 
\end{equation}
The non-ample locus may be thought of as the locus outside which the line bundle $L$ is ample. 

If the foliation is smooth we obtain the following. 
\begin{thma}
Let $\mathcal{F}$ be a smooth foliation of dimension $p$ on a projective manifold $X$, with $p \leq \dim X=n$. If $X$ is of general type then the image of any holomorphic mapping $f:\bC^{p} \to X$ of generic maximal rank tangent to $\mathcal{F}$ is contained in $\NAmp(K_X)\subsetneq X$.
\end{thma}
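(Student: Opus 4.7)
The plan is to proceed by contradiction: assume there exists a point $x_{0}\in f(\bC^{p})$ lying outside $\NAmp(K_{X})$, produce an Ahlfors-type current $T$ out of $f$, and derive a contradiction between the local positivity of $K_{X}$ at $x_{0}$ and an inequality $\int_{X} T\wedge c_{1}(K_{X})\wedge \omega^{p-1}\leq 0$ obtained by combining a higher-dimensional tautological inequality with Bott's partial connection on $N_{\mathcal{F}}$.

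For the first step, following the construction of Ahlfors currents in the several variables setting, form the normalized push-forwards $T_{r}:=\sigma(r)^{-1}(f|_{B_{r}})_{*}[B_{r}]$, where $\sigma(r)$ is a Nevanlinna-type order function and $B_{r}\subset\bC^{p}$ is the ball of radius $r$. An integral averaging (exhaustion) argument produces a sequence $r_{k}\to\infty$ along which the $T_{r_{k}}$ converge weakly to a non-zero, closed, positive current $T$ of bidimension $(p,p)$ on $X$; non-triviality of $T$ follows from the generic maximal rank hypothesis, and the fact that $df$ takes values in $f^{*}T_{\mathcal{F}}$ makes $T$ directed by the tangent distribution $T_{\mathcal{F}}$.

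The second step combines two ingredients. First, the higher-dimensional tautological inequality established earlier in the paper, applied to $f$ tangent to $\mathcal{F}$, gives, for any K\"ahler form $\omega$ on $X$,
$$\int_{X} T\wedge c_{1}(K_{\mathcal{F}})\wedge\omega^{p-1}\leq 0.$$
Second, smoothness of $\mathcal{F}$ yields the exact sequence $0\to T_{\mathcal{F}}\to T_{X}\to N_{\mathcal{F}}\to 0$, hence $K_{X}=K_{\mathcal{F}}+\det N_{\mathcal{F}}^{*}$ in $\Pic(X)$. Bott's flat partial connection on $N_{\mathcal{F}}$, available precisely because $\mathcal{F}$ is smooth, produces a representative $\theta$ of $c_{1}(\det N_{\mathcal{F}}^{*})$ whose tangent-tangent component along $T_{\mathcal{F}}$ vanishes. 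Since $T$ is directed by $T_{\mathcal{F}}$, in local foliated coordinates $T$ carries a full wedge in the conormal differentials and thus $T\wedge\theta=0$ as a current. Integrating,
$$\int_{X} T\wedge c_{1}(K_{X})\wedge\omega^{p-1}=\int_{X} T\wedge c_{1}(K_{\mathcal{F}})\wedge\omega^{p-1}\leq 0.$$

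To conclude, invoke the hypothesis $x_{0}\notin\NAmp(K_{X})$: by definition one can pick a $\bQ$-linear equivalence $K_{X}\sim_{\bQ}A+E$ with $A$ ample, $E$ effective, and $x_{0}\notin\Supp(E)$. The current $T$ has non-zero mass near $x_{0}$ and is not supported on $E$, so $T\wedge [E]$ makes sense as a positive current via Siu's intersection theory, and $\int_{X} T\wedge c_{1}(A)\wedge\omega^{p-1}>0$ by ampleness. These combine to give $\int_{X}T\wedge c_{1}(K_{X})\wedge\omega^{p-1}>0$, contradicting the previous display. The most delicate point is the second step: the precise formulation of the higher-dimensional tautological inequality, together with guaranteeing that the Ahlfors limit $T$ is tangent to $T_{\mathcal{F}}$ in the strong sense needed to combine with Bott's vanishing; a secondary subtlety in the final step is the proper-intersection justification of $T\wedge [E]$, which may require perturbing the decomposition using the bigness of $K_{X}$.
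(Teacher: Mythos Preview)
Your overall architecture matches the paper's: contradiction via a decomposition $K_X\sim_{\bQ}A+E$, a tautological inequality yielding non-positivity of the pairing with $K_{\mathcal F}$, and Bott's flatness of $N_{\mathcal F}$ along leaves to pass from $K_{\mathcal F}$ to $K_X$. However, there is a genuine gap in your first step.

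You build $T$ as a weak limit of $\sigma(r)^{-1}(f|_{B_r})_*[B_r]$, a current of bidimension $(p,p)$, and assert it is \emph{closed}. This is precisely what fails for $p>1$: the boundary term coming from $\partial B_r$ is not controlled by the order function in general, and the paper explicitly warns about this (see the remark following the construction of $\Phi$, and the references to de~Th\'elin and Burns--Sibony). Without closedness, the pairing of $T$ with a cohomology class such as $c_1(K_X)$ is ill-defined---it depends on the chosen representative---so neither your tautological inequality nor the Bott-vanishing argument can be stated at the level of Chern classes. The paper circumvents this by associating to $f:\bC^p\to X$ a bidimension $(1,1)$ current $\Phi$, obtained from
\[
T_{f,r}(\eta)=\int_0^r\frac{dt}{t}\int_{B_t} f^*\eta\wedge\omega_0^{\,p-1},
\]
for which closedness \emph{does} follow from a Stokes/Cauchy--Schwarz estimate. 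The tautological inequality is then the statement $[\Phi_1]\cdot L\ge 0$ for the lift $f_1:\bC^p\to \bP(\bigwedge^p T_X)$, and since $f$ is tangent to $\mathcal F$ this lift lands in the section $F\subset X_1$ where $L|_F=\pi^*K_{\mathcal F}^{-1}$, giving $[\Phi]\cdot K_{\mathcal F}\le 0$ directly---no extra factor $\omega^{p-1}$ is needed. Likewise, the positivity $[\Phi]\cdot E\ge 0$ comes from the First Main Theorem (Lemma~\ref{pos}) rather than from Siu-type intersection theory, so the subtlety you flag in the final step does not arise.
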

Most of the examples of foliations are not smooth and the natural generalizations to the singular case may be highly non-trivial. For instance, in the 2-dimensional case treated by McQuillan, the technical core of the proof consists in the detailed study of the contribution of singularities of the foliations. 
On the other hand, in some cases, we have results reducing the study of non-smooth foliations to some special classes of singularities, see e.g. 
\cite{Sei, Can, McQPan}. In particular, thanks to the work of McQuillan, a class of singularities emerged as a very natural one: the class of canonical singularities (see Definition \ref{def:sing}). Notice that the more classical logarithmic simple singularities (see (\ref{eq:logsing})) fall in this class. 
In this framework we obtain several results. Supposing that the singularities of the foliation are of logarithmic simple type we show the following. 

\begin{thmb}
Let $\mathcal{F}$ be a holomorphic foliation of codimension one on a projective manifold $X$ of dimension $n$. Suppose $Sing \mathcal{F}$ consists only of logarithmic simple singularities. If the canonical line bundle $K_\cF$ of the foliation is big then the image of any holomorphic mapping $f: \bC^{n-1} \to X$ of generic maximal rank tangent to $\mathcal{F}$ is contained in $\NAmp(K_\cF)\subsetneq X$.
\end{thmb}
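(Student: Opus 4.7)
The plan is to extend the strategy of Theorem~A to accommodate the singularities of the foliation. The three main ingredients are (i) the construction of a higher-dimensional Ahlfors current associated to $f$, (ii) a tautological inequality $T\cdot K_\cF\le 0$ that controls the singular contribution via the logarithmic simple assumption, and (iii) a bigness argument based on the non-ample locus.

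First, I would associate to $f\colon\bC^{n-1}\to X$ an Ahlfors current $T$ of bidimension $(n-1,n-1)$ on $X$, obtained as a weak limit, along a suitable subsequence $r_k\to\infty$, of normalized integration currents of the form $v_k^{-1}\,f_*\bigl[B(0,r_k)\bigr]$, where $v_k$ is the volume of $f(B(0,r_k))$ with respect to a fixed K\"ahler metric. The generic maximal rank hypothesis guarantees that $T$ is a nonzero closed positive current supported in $\overline{f(\bC^{n-1})}$, and a standard Ahlfors-type extraction allows one to choose the subsequence so that the cohomology class $\{T\}$ is nef.

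Second, I would establish the inequality $T\cdot K_\cF\le 0$. Over $X\setminus \mathrm{Sing}\,\cF$ the foliation is smooth, and the tangency of $f$ to $\cF$ yields a tautological bound of the same type as in Theorem~A, up to a local correction supported near $\mathrm{Sing}\,\cF$. Here the logarithmic simple assumption is decisive: in local coordinates at such a singular point, $\cF$ is defined by a $1$-form $\omega=\sum_{i=1}^{k}\lambda_i\,dz_i/z_i$ with $\lambda_i\in\bC^\ast$, and $K_\cF$ admits an explicit local trivialization in terms of the log pair $(X,D)$, where $D$ is the simple normal crossings divisor cut out by $z_1\cdots z_k$. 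A Stokes-type computation on small polydisk neighborhoods of the singular strata, combined with the logarithmic nature of $\omega$ and the sub-averaging properties of the Ahlfors current, shows that the correction is non-positive, yielding the desired inequality.

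Third, assume by contradiction that $f(\bC^{n-1})\not\subset\NAmp(K_\cF)$. By the definition of the non-ample locus, there exist an ample $\bQ$-divisor $A$ and an effective $\bQ$-divisor $E$ with $K_\cF\sim_\bQ A+E$ and $f(\bC^{n-1})\not\subset \Supp(E)$. Positivity of $T$ together with $\Supp(T)\not\subset \Supp(E)$ gives $T\cdot E\ge 0$, while $T\cdot A>0$ because $A$ is ample and $T$ is a nonzero closed positive current. Hence $T\cdot K_\cF>0$, contradicting the tautological inequality; the strict inclusion $\NAmp(K_\cF)\subsetneq X$ is then a direct consequence of the bigness of $K_\cF$. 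The main obstacle is the second step: the precise local analysis near each logarithmic simple singular point, where one must rule out that $T$ charges $\mathrm{Sing}\,\cF$ in a way that would spoil the estimate. This is exactly what the logarithmic simple assumption makes tractable, by providing a clean local normal form amenable to an explicit computation of the local contribution.
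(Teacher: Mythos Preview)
Your overall architecture---Ahlfors current, tautological inequality $[\Phi]\cdot K_{\cF}\le 0$, then the decomposition $K_{\cF}\sim_{\bQ} A+E$---matches the paper, and your third step is essentially the paper's argument verbatim. But steps (i) and (ii) contain real gaps.

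\medskip
\textbf{Step (i).} The current you describe, a weak limit of $v_k^{-1}f_*[B(0,r_k)]$, has bidimension $(n-1,n-1)$; such higher-bidimension Ahlfors currents are in general \emph{not} closed (this is exactly the obstruction discussed by de Th\'elin and Burns--Sibony), and moreover they cannot be paired with the $(1,1)$-class $c_1(K_{\cF})$ to produce a number. The paper instead constructs a bidimension $(1,1)$ current by averaging $f^*\eta\wedge\omega_0^{n-2}$ against $dt/t$ over balls, and proves closedness by a Cauchy--Schwarz/Stokes estimate specific to this $(1,1)$ construction. Your nefness claim for $\{T\}$ is also not automatic; the paper proves the relevant positivity $[\Phi]\cdot[Z]\ge 0$ for $Z$ effective via the First Main Theorem.

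\medskip
\textbf{Step (ii).} This is where the proof actually happens, and the mechanism is not a local Stokes computation near the singular locus. The paper first adapts the foliation to a simple normal crossings invariant divisor $E$ and passes to the logarithmic Grassmannian $X_1=\bP(\bigwedge^{n-1}T_X(-\log E))$, where the foliation gives a section $F$ with $\mathcal O_{X_1}(-1)|_F=\pi^*K_{\cF}^{-1}$. The logarithmic tautological inequality then yields
\[
[\Phi]\cdot K_{\cF}^{-1}\ \ge\ -\limsup_r \frac{N^1_f(E,r)}{T_{f,r}(H)}.
\]
The heart of the matter is showing that this truncated counting term vanishes asymptotically. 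The paper does this \emph{globally and algebro-geometrically}: for each irreducible component $E_1$ of $E$, the intersection $C$ of $E_1$ with the formal leaf through $f$ is a codimension-two locus, and one constructs sections $s_m\in H^0(X,H^{\otimes m})$ whose restriction to the formal leaf vanishes along $C$ to order $q_m$ with $m/q_m\to 0$ (a Riemann--Roch/dimension count on the formal neighborhood). This forces $N^1_f(E_1,r)\le (m/q_m)T_{f,r}(H)+O(1)$, hence the limit is zero. Your proposed ``sub-averaging on polydisks'' does not supply this estimate; in particular, nothing in a purely local analysis prevents the leaf from meeting $E$ with large multiplicity contribution, which is exactly what the formal-leaf section argument rules out.
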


Theorem B may be seen as an illustration of the Green-Griffiths principle in the setting of holomorphic maps tangent to foliations for which we formulate the following generalized Green-Griffiths-Lang conjecture:
\begin{conjecture}[Generalized Green-Griffiths-Lang conjecture]
Let $(X, \mathcal{F})$ be a projective foliated manifold where $Sing \mathcal{F}$ consists only of canonical singularities and $K_\cF$ is big. Then there exists an algebraic subvariety $Y \subsetneq X$ such that any non-degenerate holomorphic map $f: \bC^p \to X$ tangent to $\mathcal{F}$ has image $f(\bC^p)$ contained in $Y$.
\end{conjecture}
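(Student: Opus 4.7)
The plan is to mimic the strategy developed to prove Theorems A and B, based on Ahlfors--Nevanlinna currents tangent to the foliation, and to confront the essential new obstacle: handling general canonical singularities rather than only smooth or logarithmic simple ones.

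First, given a non-degenerate holomorphic map $f:\bC^p\to X$ tangent to $\cF$, I would associate to $f$ an \emph{Ahlfors current} $T[f]$, obtained as a weak limit, along a well-chosen sequence of radii $r_k\to\infty$ (selected by a higher-dimensional Ahlfors lemma of Nevanlinna type), of normalized currents of integration of the form
\[
\frac{1}{T(r_k)}(f|_{B_{r_k}})_{*}[B_{r_k}],
\]
for a suitable order function $T(r)$. By construction $T[f]$ is a closed positive current of bidimension $(p,p)$ supported in $\overline{f(\bC^p)}$, and it is directed by $\cF$ in the sense that its tangent planes lie in $T\cF$ almost everywhere.

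The second step is to establish a \emph{tautological inequality}
\[
T[f]\cdot c_1(K_\cF)\cdot \omega^{p-1}\,\le\,0
\]
against a Kähler class $\omega$, exploiting the fact that tangency of $f$ to $\cF$ makes $f^{*}K_\cF$ comparable to the canonical bundle of $\bC^p$, which is trivial, modulo a boundary/error term which the Ahlfors averaging annihilates. Combined with bigness of $K_\cF$, writing $K_\cF\sim_{\bQ}A+E$ with $A$ ample and $E$ effective, the ampleness of $A$ would then force $T[f]$ to have non-trivial mass against $\Supp(E)$ for every such decomposition, hence to be supported in $\NAmp(K_\cF)$. This directly yields $f(\bC^p)\subset \NAmp(K_\cF)\subsetneq X$, giving the desired $Y$.

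The main obstacle, and the reason this is stated as a conjecture rather than a theorem, is the control of the contribution of $\mathop{\rm Sing}\cF$ to the tautological inequality. Away from $\mathop{\rm Sing}\cF$ the inequality is essentially formal, and for logarithmic simple singularities one can, as in Theorem B, compute the local contribution by hand thanks to the explicit normal form. For general canonical singularities the natural strategy is to pass to a foliated resolution $\pi:(\tilde X,\tilde\cF)\to(X,\cF)$, lift $f$ to $\tilde f:\bC^p\to\tilde X$, and exploit canonicity in the form $K_{\tilde\cF}=\pi^{*}K_\cF+E$ with $E$ effective and $\pi$-exceptional, so that the extra term has the right sign for the tautological inequality on $\tilde X$. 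The real difficulty---already the technical core of McQuillan's two-dimensional theorem---is to prove that $\tilde T[\tilde f]$ does not concentrate on the exceptional divisor $E$ in a way that overwhelms the inequality, which in dimension two required delicate Diophantine/Lelong--number estimates (the Vojta--McQuillan refinement) near each type of canonical singularity. Finding the right higher-dimensional analog of these local estimates, for each stratum of a canonical foliated singularity, is the step that presently seems to require genuinely new ideas, and explains the gap between the conjecture and the partial results (Theorems A and B) obtained in the paper.
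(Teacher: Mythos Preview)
This statement is a \emph{conjecture}, not a theorem: the paper does not prove it and offers no proof for you to compare against. You correctly recognize this in your final paragraph, so your proposal is not really a proof but an outline of the strategy behind the partial results (Theorems A, B, C, F) together with an honest identification of the missing ingredient. On that level your diagnosis is accurate: the obstruction is precisely the control of the Ahlfors current near the singular locus for general canonical singularities, and the paper's results in the smooth, logarithmic-simple, and first-integral cases are exactly the instances where this control is available.

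That said, two technical points in your outline deserve correction. First, you construct $T[f]$ as a closed positive current of bidimension $(p,p)$; the paper explicitly warns (see the remark following the construction in \S2) that for bidimension $(k,k)$ with $k\neq 1$ the closedness is lost, which is why the paper works throughout with bidimension $(1,1)$ currents and pairs them directly with $c_1(K_\cF)$ rather than with $c_1(K_\cF)\cdot\omega^{p-1}$. Your tautological inequality should therefore be reformulated accordingly. Second, your resolution step ``pass to a foliated resolution $\pi:(\tilde X,\tilde\cF)\to(X,\cF)$'' presupposes the existence of such a resolution with controlled singularities; as the paper notes, this is only known for codimension-one foliations in dimension $3$ (Cano) and foliations by curves in dimension $3$ (McQuillan--Panazzolo), so in the general setting of the conjecture this step is itself open.
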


In his recent work \cite{Dem10}, Demailly has also formulated a generalized Green-Griffiths-Lang conjecture but one should notice that the way he defines foliations of general type, using {\it admissible metrics}, is different. It seems an interesting question to compare the two definitions. One should insist here on the importance played by the singularities as stressed by the following example.
\begin{example}
Take a foliation $\mathcal{F}$ on $X=\bP^2$ with a Zariski dense entire curve $f: \bC \to (X, \mathcal{F})$ tangent to it. We have $K_\cF= \mathcal{O}(d_1-1)$ where $d_1$ is the degree of $\mathcal{F}$. Now, consider a birational map $g: \bP^2 \to \bP^2$ of degree $d_2$ and the foliation $\mathcal{G}:=g^*\mathcal{F}$. If $d_2$ is sufficiently large $K_\cG$ becomes positive. Nevertheless, the lifting of the entire curve shows that there exists a Zariski dense entire curve tangent to $\mathcal{G}$.
\end{example}

If the singularities are canonical and the foliation has local first integrals (see (\ref{eq:1int})) we get the following. 

\begin{thmc} 
Let $\mathcal{F}$ be a holomorphic foliation of codimension one on a projective manifold of general type $X$ of dimension $n$. Suppose $Sing \mathcal{F}$ consists only of canonical singularities with local first integrals. Then
the image of any holomorphic mapping $f:\bC^{n-1} \to X$ of generic maximal rank tangent to $\mathcal{F}$ is contained in $\NAmp(K_X)\subsetneq X$.
\end{thmc}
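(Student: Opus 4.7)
The plan is to reduce Theorem~C to the smooth-foliation case of Theorem~A by resolving the foliation~$\mathcal F$. The first step is to build a birational morphism $\pi\colon\tilde X\to X$ from a smooth projective $\tilde X$ such that $\tilde{\mathcal F}:=\pi^{*}\mathcal F$ is a \emph{smooth} codimension-one foliation on $\tilde X$. Near each $p\in\mathrm{Sing}\,\mathcal F$, the local first integral $g$ realises $\mathcal F$ as $dg=0$, and an appropriate sequence of admissible blow-ups over the critical locus of $g$---whose termination and interaction with the foliation discrepancies is controlled by the canonical hypothesis via the local structure theory of codimension-one foliation singularities (Seidenberg, Mattei--Moussu, Cano)---turns the lifted first integral into a submersion, so that $\tilde{\mathcal F}$ is smooth. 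Since $X$ is of general type and $\pi$ is birational between smooth projective varieties, $\tilde X$ is of general type as well.

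Next, I would lift the map. Because $f\colon\bC^{n-1}\to X$ has generic maximal rank and is tangent to $\mathcal F$, the Zariski closure $Z:=\overline{f(\bC^{n-1})}$ is an irreducible $(n-1)$-dimensional subvariety of $X$, not contained in the codimension-at-least-$2$ subset $\pi(\Exc(\pi))$. On the dense open $U:=f^{-1}(X\setminus\pi(\Exc(\pi)))$ the composition $\pi^{-1}\circ f$ is well-defined and holomorphic; using the smoothness of $\tilde{\mathcal F}$ (so that leaves of $\mathcal F$ lift to smooth leaves of $\tilde{\mathcal F}$) together with the compactness of $\tilde X$, one extends this to a holomorphic $\tilde f\colon\bC^{n-1}\to\tilde X$ tangent to $\tilde{\mathcal F}$ of generic maximal rank, with $\pi\circ\tilde f=f$. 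Since $\pi(\tilde Z)=Z$ has dimension $n-1$ while $\pi(\Exc(\pi))$ has codimension at least $2$, the lifted Zariski closure $\tilde Z:=\overline{\tilde f(\bC^{n-1})}$ is not contained in $\Exc(\pi)$.

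Third, Theorem~A applied to $(\tilde X,\tilde{\mathcal F})$ yields $\tilde Z\subset\NAmp(K_{\tilde X})$, and it remains to descend this inclusion to $X$. Write $K_{\tilde X}=\pi^{*}K_X+E$ with $E$ effective and $\pi$-exceptional. Given $\tilde x\in\tilde X\setminus\Exc(\pi)$ with $\pi(\tilde x)\notin\NAmp(K_X)$, choose $K_X\sim_{\bQ}A+F$ with $A$ ample, $F$ effective and $\pi(\tilde x)\notin\Supp(F)$, and fix an effective exceptional divisor $G$ on $\tilde X$ with $-G$ being $\pi$-ample. Then, for small $\varepsilon>0$,
$$
K_{\tilde X}\sim_{\bQ}(\pi^{*}A-\varepsilon G)+(\pi^{*}F+E+\varepsilon G)
$$
displays $K_{\tilde X}$ as the sum of an ample divisor and an effective divisor whose support avoids $\tilde x$; hence $\tilde x\notin\NAmp(K_{\tilde X})$. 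Contrapositively, $\NAmp(K_{\tilde X})\cap(\tilde X\setminus\Exc(\pi))\subset\pi^{-1}(\NAmp(K_X))$. Since $\tilde Z\not\subset\Exc(\pi)$, the dense open subset $\tilde Z\setminus\Exc(\pi)$ of the irreducible variety $\tilde Z$ maps under $\pi$ into the closed set $\NAmp(K_X)$, which forces $f(\bC^{n-1})\subset Z=\pi(\tilde Z)\subset\NAmp(K_X)$.

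The main obstacle is the first step: producing a resolution on which $\tilde{\mathcal F}$ is genuinely smooth, from the sole hypothesis of canonical singularities with local first integrals, is a nontrivial input from codimension-one foliation-singularity theory specific to this setting. The holomorphic lift of $f$ across the resulting exceptional locus is a further delicate analytic point, since the indeterminacy set $\bC^{n-1}\setminus U$ need not have codimension at least~$2$ in~$\bC^{n-1}$ and one must exploit the leafwise structure of $\tilde{\mathcal F}$. Once these two steps are granted, the descent in the third step is a routine consequence of the existence of a $\pi$-antiample exceptional divisor.
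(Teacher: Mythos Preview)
Your reduction strategy has a genuine gap in the very first step: in general one \emph{cannot} resolve a codimension-one foliation with local holomorphic first integrals to a \emph{smooth} foliation by blowing up. If $g$ is a local first integral and $\pi$ is any composition of blow-ups, then $\pi^{*}g$ has reducible (simple normal crossing) fibers over the critical values, and after clearing common factors the induced foliation is still singular along the crossings. Concretely, already for $g=z_1z_2$ on $\bC^2$ the saddle singularity $z_2\,dz_1+z_1\,dz_2=0$ persists under every sequence of point blow-ups; the same phenomenon occurs for the logarithmic simple singularities $\sum_i\lambda_i\,dz_i/z_i=0$. So the assertion that ``the lifted first integral becomes a submersion'' is false, and the references you invoke (Seidenberg, Cano) produce precisely logarithmic simple singularities, not smoothness. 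Consequently Theorem~A is not available on $\tilde X$ and the argument collapses.

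The paper's proof follows a different route and does \emph{not} attempt to reach a smooth foliation. It uses Hironaka to obtain $(\tilde X,\tilde{\mathcal F})$ with only \emph{logarithmic simple} singularities, applies Theorem~\ref{canineq} (the singular tautological inequality underlying Theorem~B) to get $[\tilde\Phi]\cdot K_{\tilde{\mathcal F}}^{-1}\ge 0$, and pushes this down via the canonical hypothesis to $[\Phi]\cdot K_{\mathcal F}^{-1}\ge 0$. The crucial extra ingredient---which your proposal never confronts---is a direct Bott-type computation on $X$: the local first integrals give $d\omega_j=\beta_j\wedge\omega_j$ with $\beta_j$ holomorphic, and one checks that a de~Rham representative of $c_1(N_{\mathcal F})$ vanishes along the leaves, so $[\Phi]\cdot N_{\mathcal F}=0$. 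Combining, $[\Phi]\cdot K_X=[\Phi]\cdot K_{\mathcal F}-[\Phi]\cdot N_{\mathcal F}\le 0$, which contradicts $f(\bC^{n-1})\not\subset\NAmp(K_X)$. In short, the role of ``local first integrals'' is not to enable a smooth resolution, but to kill the normal-bundle contribution in the current intersection; this is the step you are missing.
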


As a corollary of the classical result of Malgrange \cite{Mal} we deduce the following.
\begin{cord}
Let $\mathcal{F}$ be a holomorphic foliation of codimension one on a projective manifold $X$, $\dim X=n$. Suppose $X$ is of general type and $\codim Sing \mathcal{F} \geq 3$ where all singularities are canonical. Then
the image of any holomorphic mapping $f:\bC^{n-1} \to X$ of generic maximal rank tangent to $\mathcal{F}$ is contained in $\NAmp(K_X)\subsetneq X$.
\end{cord}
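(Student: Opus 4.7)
The plan is to deduce Corollary D from Theorem C by using Malgrange's theorem to produce local first integrals from the codimension hypothesis on $\Sing \mathcal{F}$, so that the hypotheses of Theorem C become satisfied automatically.

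First I recall Malgrange's theorem on integrable $1$-forms with singularities: if $\omega$ is an integrable holomorphic $1$-form on a complex manifold and the singular set $\{\omega=0\}$ has codimension $\geq 3$, then $\omega$ admits a non-constant holomorphic first integral in a neighborhood of every point of the singular set. Since a codimension one foliation $\mathcal{F}$ is locally defined by an integrable $1$-form (whose zero set is precisely $\Sing \mathcal{F}$), the hypothesis $\codim \Sing \mathcal{F}\geq 3$ implies that around every singular point of $\mathcal{F}$ there exists a holomorphic first integral in the sense of (\ref{eq:1int}). Around non-singular points, local first integrals exist trivially by the Frobenius theorem.

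Second, combining this with the hypothesis that every singularity of $\mathcal{F}$ is canonical, the pair $(X,\mathcal{F})$ satisfies precisely the assumptions of Theorem C: $X$ is of general type, $\codim \mathcal{F}=1$, $\Sing \mathcal{F}$ consists only of canonical singularities, and $\mathcal{F}$ admits local first integrals at every singular point. Applying Theorem C to the given map $f:\bC^{n-1}\to X$, which has generic maximal rank and is tangent to $\mathcal{F}$, we conclude that $f(\bC^{n-1})\subseteq \NAmp(K_X)$. Finally, since $X$ is of general type the canonical bundle $K_X$ is big, and a standard property of the augmented base locus (recalled in (\ref{eq:empty}), which characterizes when $\NAmp$ is empty, together with the fact that $\NAmp(L)\subsetneq X$ as soon as $L$ is big) yields $\NAmp(K_X)\subsetneq X$, completing the proof.

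There is essentially no technical obstacle: the argument is a two-line reduction and all the real work is hidden in Malgrange's theorem on one side and Theorem C on the other. The only point worth being careful about is to apply Malgrange's result in the correct analytic setting (integrable $1$-form, codimension $\geq 3$ singular set, existence of a local holomorphic first integral), which matches exactly the definition (\ref{eq:1int}) of foliations with local first integrals used in Theorem C.
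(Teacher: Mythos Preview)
Your proof is correct and follows exactly the same route as the paper: apply Malgrange's theorem (stated as Theorem~\ref{thm:Mal}) to obtain local holomorphic first integrals from the hypothesis $\codim \Sing\mathcal{F}\ge 3$, and then invoke Theorem~C. The paper's own proof is the one-line statement ``It follows from Theorems~C and~\ref{thm:Mal}'', of which your write-up is simply a careful expansion.
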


One should remark that, in the theorem above, we may suppose that $\codim Sing \mathcal{F} =2$ and that, locally there exists a formal first integral; in this case, by theorem 0.2 of \cite{Mal}, the formal integral will be convergent (holomorphic) and then apply Theorem C.

By a result due to Takayama \cite{Taka} on a variety $X$ of general type every irreducible component of $\NAmp(K_X)$ 
is uniruled. Therefore $\NAmp(K_X)$ is a very natural place for the images of the holomorphic maps to land in. Notice however that  in Theorems A and C we do not establish a direct link between the entire curves coming from the holomorphic maps and the rational curves given by Takayama's result. 
Notice also that when the relevant divisor ($K_X$ in Theorems A and C and $K_\cF$ in Theorem B) is ample, by (\ref{eq:empty}) the results above {\it exclude} the existence of maximal rank holomorphic mappings tangent to the foliations appearing in their statements.  An explicit and interesting illustration of such a situation is the following.

\begin{core}
\begin{enumerate}

\item Let $X_d\subset \bP^{n+1}$ a smooth hypersurface of degree $d>n+2$. 
Let $\mathcal{F}$ be a holomorphic foliation of codimension one on  $X_d$ such that $Sing \mathcal{F}$ consists only of canonical singularities with local first integrals. Then there is no
 holomorphic mapping $f:\bC^{n-1} \to X_d$ of generic maximal rank tangent to $\mathcal{F}$.
\item Let $\mathcal{F}_d$ be a holomorphic foliation of codimension one on $\bP^n$ of degree $d\geq n$. Suppose $Sing \mathcal{F}_d$ consists only of logarithmic simple singularities. Then there is no holomorphic mapping $f: \bC^{n-1} \to \bP^n$ of generic maximal rank tangent to $\mathcal{F}$. \end{enumerate}

\end{core}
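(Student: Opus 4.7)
The plan is to reduce each part to the corresponding theorem proved earlier in the paper, via the equivalence \eqref{eq:empty} between ampleness and emptiness of the non-ample locus. If the relevant canonical divisor can be shown to be ample, its non-ample locus is empty; since Theorems B and C both conclude that $f(\bC^{n-1})$ is contained in that locus, the nonexistence of $f$ follows immediately.

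For part (1), I would apply the adjunction formula to obtain $K_{X_d}\cong \mathcal{O}_{X_d}(d-n-2)$, which is ample exactly under our hypothesis $d>n+2$. In particular $X_d$ is of general type, and together with the assumption that $Sing\,\mathcal{F}$ consists only of canonical singularities with local first integrals, this places us precisely in the setting of Theorem C. The containment $f(\bC^{n-1})\subseteq \NAmp(K_{X_d})=\emptyset$ then yields the claim.

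For part (2), the key step is to compute $K_{\mathcal{F}_d}$. Writing $\mathcal{F}_d$ as the foliation defined by a twisted $1$-form $\omega\in H^0(\bP^n,\Omega^1_{\bP^n}(d+2))$ and taking determinants in the saturated sequence $0\to T_{\mathcal{F}_d}\to T_{\bP^n}\to \mathcal{O}_{\bP^n}(d+2)\otimes \mathcal{I}_Z\to 0$ (with $Z$ of codimension $\ge 2$, which does not affect first Chern classes), I would obtain $K_{\mathcal{F}_d}=K_{\bP^n}\otimes \mathcal{O}_{\bP^n}(d+2)=\mathcal{O}_{\bP^n}(d+1-n)$, which is ample as soon as $d\ge n$. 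Under the hypothesis of logarithmic simple singularities of $\mathcal{F}_d$, Theorem B then forces $f(\bC^{n-1})\subseteq \NAmp(K_{\mathcal{F}_d})=\emptyset$.

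I do not expect any substantive obstacle: both parts reduce to a numerical verification of ampleness followed by a direct invocation of the main theorems. The only point deserving explicit care is to fix the degree convention for codimension-one foliations on $\bP^n$, as this determines the precise shift in the computation of $K_{\mathcal{F}_d}$ and hence the threshold $d\ge n$.
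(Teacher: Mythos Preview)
Your proposal is correct and follows essentially the same route as the paper's own proof: compute $K_{X_d}=\mathcal{O}_{X_d}(d-n-2)$ and $K_{\mathcal{F}_d}=\mathcal{O}_{\bP^n}(d-n+1)$, observe these are ample under the stated degree hypotheses, and then invoke Theorem~C (resp.\ Theorem~B) together with \eqref{eq:empty}. Your remark about fixing the degree convention for foliations on $\bP^n$ is apt; the paper implicitly uses the convention under which $N_{\mathcal{F}_d}=\mathcal{O}_{\bP^n}(d+2)$, consistent with the $\bP^2$ example in the introduction where $K_{\mathcal{F}}=\mathcal{O}(d_1-1)$.
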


We construct in \S 6 explicit examples of varieties of general type with foliations satisfying the hypotheses of the preceding theorems.

In dimension $3$ we obtain the following result. 
\begin{thmf}
Let $\mathcal{F}$ be a holomorphic foliation of codimension one on a projective manifold $X$ of dimension $3$. Suppose $Sing \mathcal{F}$ consists only of canonical singularities. Consider a holomorphic mapping $f: \bC^{2} \to X$ of generic maximal rank tangent to $\mathcal{F}$. If the canonical line bundle $K_\cF$ of the foliation is big, then $f$ is not Zariski dense.
\end{thmf}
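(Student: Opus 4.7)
The plan is to argue by contradiction. Suppose $f:\bC^2\to X$ is Zariski dense. The strategy is to resolve the singularities of $\cF$ so that the lifted foliation is simple, exploit the canonicity of the singularities to transfer bigness of $K_\cF$ to the resolved model, build the Ahlfors current associated with the lifted map, and invoke the higher dimensional tautological inequality of this paper to contradict bigness of $K_\cF$.

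First, I would invoke Cano's reduction of singularities for codimension one holomorphic foliations on smooth threefolds, producing a projective birational morphism $\pi:\tilde X\to X$ with $\tilde X$ smooth such that $\tilde\cF:=\pi^*\cF$ has only simple (reduced) singularities. Since the singularities of $\cF$ are canonical, the discrepancy formula for foliation canonical bundles reads
$$K_{\tilde\cF}=\pi^*K_\cF+E,$$
with $E$ an effective $\pi$-exceptional $\bQ$-divisor; in particular $K_{\tilde\cF}$ remains big. The map $f$ lifts to a holomorphic map $\tilde f:\bC^2\to\tilde X$ of generic maximal rank tangent to $\tilde\cF$, and since $f$ is Zariski dense and $\pi$ is birational, $\tilde f$ is Zariski dense in $\tilde X$.

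Next, I would associate to $\tilde f$ an Ahlfors current $\tilde T$ on $\tilde X$ via the higher dimensional Nevanlinna machinery developed in the earlier sections of this paper: $\tilde T$ is closed, positive, directed by $\tilde\cF$, nonzero, and its cohomology class pairs strictly positively with every big class by Zariski density of $\tilde f$. Applying the higher dimensional tautological inequality for the simple foliation $\tilde\cF$ gives $\tilde T\cdot K_{\tilde\cF}\le 0$. Pushing forward under $\pi$ and using the discrepancy formula, together with effectivity of $E$ and positivity of $\tilde T$, one deduces $\pi_*\tilde T\cdot K_\cF\le 0$.

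Finally, bigness of $K_\cF$ provides a decomposition $K_\cF\sim_\bQ A+D$ with $A$ ample and $D$ effective. Since $\tilde f$, hence $f$, is Zariski dense, $\pi_*\tilde T$ is a nonzero positive current on $X$ whose support is not contained in $\Supp(D)$, so $\pi_*\tilde T\cdot K_\cF=\pi_*\tilde T\cdot A+\pi_*\tilde T\cdot D>0$, contradicting the inequality above. The main difficulty is the tautological inequality step: one must verify that at each simple singularity of $\tilde\cF$ in the threefold setting the local contribution of $\tilde T$ is controlled, so that $\tilde T\cdot K_{\tilde\cF}\le 0$ really does hold. The threefold hypothesis enters decisively here, both because Cano's reduction of singularities of codimension one foliations is only available in this dimension, and because the local analysis of simple singularities on threefolds is still amenable to the techniques developed in this paper.
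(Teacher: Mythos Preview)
Your overall architecture---reduce via Cano, use canonicity to compare canonical bundles, apply a tautological inequality, contradict bigness---matches the paper's. But the step you flag as ``the main difficulty'' is not a technical verification to be discharged afterward: it is the entire substance of the proof, and your proposal does not supply it. After Cano's reduction the simple singularities of $\tilde\cF$ include, beyond the logarithmic simple type handled by Theorem~\ref{canineq}, the \emph{saddle-node} types. Over such a singularity the graph of the foliation inside $\bP(T_{\tilde X}^*(\log E))$ is itself singular (it is locally the blow-up of $\tilde X$ in a non-reduced ideal such as $(z_1^{p_1},z_3)$ or $(z_1^{p_1}z_2^{p_2},z_3)$), so the logarithmic tautological inequality does not yield $[\Phi]\cdot K_{\tilde\cF}\le 0$ directly; one only obtains $[\Phi]\cdot K_{\tilde\cF}^{-1}\ge -[\Phi_1]\cdot E_0$ for an exceptional divisor $E_0$ coming from a resolution of that graph by successive blow-ups along curves.

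The paper's mechanism for killing this error term is an infinite iteration: repeating the construction on the blown-up model produces a tower $(X^{(n)},\cF^{(n)})$ with $[\Phi]^{(n)}\cdot K_{\cF^{(n)}}^{-1}\ge -[\Phi_1]^{(n)}\cdot E_n$ and, by canonicity, $[\Phi]^{(n)}\cdot K_{\cF^{(n)}}^{-1}\le [\Phi]\cdot K_\cF^{-1}$. One then needs $\lim_n[\Phi_1]^{(n)}\cdot E_n=0$, a threefold \emph{desingularization-of-currents} statement analogous to McQuillan's surface result. Its proof (Theorem~\ref{effectiveandblowups} and Corollary~\ref{limite}) shows that for a tower of blow-ups along curves contained in a leaf there is a constant $\alpha$ independent of $k$ with $m(\alpha k^{-1/3}H-E_k)$ effective for all $k$ and $m\gg 0$, which forces $[\Phi]^{(k)}\cdot E_k\to 0$. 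This effectivity requires a nontrivial intersection-theoretic computation (in particular the $E_i^3$ turn out to be periodic in $i$) together with a vanishing argument via Lemma~\ref{nefonrestriction}, and it occupies most of Section~7. This is the genuine new ingredient your outline is missing.
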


Let us indicate the methods of the proofs, which may be of independent interest. We recall that McQuillan (\cite{McQ0}) showed that one can associate to a transcendental entire curve $f$ a closed positive current of bidimension $(1,1)$  called Ahlfors current.

In the second section, we generalize the construction of such currents for arbitrary non-degenerate holomorphic mappings $f: \bC^p \to X$ in compact K\"ahler manifolds and show how classical Nevanlinna theory (see \cite{GK} or \cite{Sha}) translate into intersection theory for such currents. The problem of associating to several variables holomorphic maps currents with suitable properties has been recently considered by de Th\'elin and Burns-Sibony in \cite{DeT, BuSi}. We refer the reader to these interesting papers for more details on their motivations and for applications in other directions.

The most important part of this theory is developed in the third section which is devoted to the proof of several {\it tautological inequalities}. They are particularly useful when the holomorphic mappings we study are tangent to holomorphic foliations. This is the object of section $4$, where we prove that if the singularities of the foliation are mild, we can control the intersection of the Ahlfors current with the canonical bundle of the foliation. This leads to the theorems stated above proved in section $5$. We illustrate these results, in section $6$, with examples of foliated varieties having the appropriate singularities.

In the final section, we prove a desingularization statement for Ahlfors currents in dimension $3$ which is used to obtain the degeneracy of holomorphic mappings tangent to foliations of general type with canonical singularities.

The present work leads to several questions. 
Two of them seem particularly interesting. 
\begin{question}
Is it possible to remove the hypothesis of the 
existence of local first integrals from Theorem C?
\end{question}
Notice that a positive answer to the previous question when $\dim(X)=3$ would lead, thanks to the work of Cano \cite{Can}, to the non-existence of Zariski dense holomorphic maps from $\bC^2$ into a threefold of general type, which are tangent to a holomorphic foliation. 

\begin{question}
Is it possible to find (numerical/geometrical) conditions insuring the existence of (codimension one) holomorphic foliations on varieties of general type?
\end{question}

\bigskip
\textbf{Acknowledgements}. We would like to thank Michael McQuillan for many interesting discussions on the subject of this paper. We also thank Dominique Cerveau, Charles Favre, Daniel Panazzolo and Fr\'ed\'eric Touzet for useful conversations.

\section{Holomorphic mappings and closed positive currents}
Let a holomorphic mapping $f:\bC^m \to X$ of maximal rank be given, where $X$ is a complex projective manifold. We want to associate to $f$ a closed positive current of bidimension $(1,1)$ adapting in higher dimension the ideas of \cite{McQ0} (see also \cite{Br}) developed in the one-dimensional case.

We fix once and for all a K\"ahler form $\omega$ on $X$. On $\bC^m$ we take the homogeneous metric form
$$\omega_0:=dd^c \ln |z|^2,$$ and denote by 
$$\sigma=d^c \ln |z|^2 \wedge \omega_0^{m-1}$$
the Poincar\'e form.

Consider $\eta \in A^{1,1}(X)$ and for any $r>0$  define
$$T_{f,r}(\eta)=\int_0^r \frac{dt}{t} \int_{B_t} f^*\eta \wedge \omega_0^{m-1},$$
where $B_t \subset \bC^m$ is the ball of radius $t$. Then we consider the positive currents $\Phi_r \in A^{1,1}(X)'$ defined by
$$\Phi_r(\eta):=\frac{T_{f,r}(\eta)}{T_{f,r}(\omega)}.$$
This gives a family of positive currents of bounded mass from which we can extract a subsequence $\Phi_{r_n}$ which converges to $\Phi \in A^{1,1}(X)'.$

Let us prove that 
\begin{claim}
We can choose $\{r_n\}$ such that $\Phi$ is moreover closed.
\end{claim}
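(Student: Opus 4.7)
The plan is to adapt to higher dimension the classical one-dimensional Ahlfors argument used in \cite{McQ0,Br}. It suffices to produce, for each smooth real $1$-form $\alpha$ on $X$, a subsequence $r_n\to\infty$ along which $\Phi_{r_n}(d\alpha)\to 0$; a standard diagonal extraction over a countable dense family of test forms then yields a common subsequence for which $d\Phi=0$ as a current.

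The first step is a Stokes computation. Since $\omega_0 = dd^c\log|z|^2$ is closed on $\bC^m\setminus\{0\}$, so is $\omega_0^{m-1}$. Applying Stokes to the shell $B_t\setminus B_\varepsilon$ and letting $\varepsilon\to 0$ (the inner boundary contribution vanishes by a standard $|z|$-weight estimate) gives
\[
T_{f,r}(d\alpha) \;=\; \int_0^r \frac{dt}{t}\int_{\partial B_t} f^*\alpha \wedge \omega_0^{m-1}.
\]

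The second step is a length--area inequality. The Poincar\'e form $\sigma$ is a dilation-invariant unit-mass volume form on each sphere $\partial B_t$. Writing $f^*\alpha\wedge\omega_0^{m-1} = g_t\,\sigma$ for a scalar function $g_t$ on $\partial B_t$ and applying Cauchy--Schwarz with respect to the probability measure $\sigma$ yields
\[
\left|\int_{\partial B_t} f^*\alpha \wedge \omega_0^{m-1}\right|^2 \;\le\; C\,\|\alpha\|_\infty^2 \int_{\partial B_t} |df|^2\,\sigma,
\]
where the right-hand side is comparable to an appropriate derivative of the spherical area function $A(t)=\int_{B_t} f^*\omega\wedge\omega_0^{m-1}$, while $\tfrac{d}{dr}T_{f,r}(\omega)=A(r)/r$. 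Combining this with one further Cauchy--Schwarz applied to the outer integral $\int_0^r dt/t$ and invoking the classical Borel lemma on logarithmic derivatives of monotone functions (valid outside a set of $r$'s of finite logarithmic measure) produces a sequence $r_n\to\infty$ along which $T_{f,r_n}(d\alpha) = o(T_{f,r_n}(\omega))$, i.e.\ $\Phi_{r_n}(d\alpha)\to 0$.

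The main technical obstacle is the precise form of the length--area estimate in higher dimension: in contrast with the case $m=1$ where one literally bounds the length of $f(\partial B_t)$ by $\sqrt{t\,A'(t)}$, the $(2m-1)$-dimensional sphere requires a careful decomposition of $\omega_0^{m-1}|_{\partial B_t}$ into the Poincar\'e volume factor $d^c\log|z|^2\wedge\omega_0^{m-1}$ and a transverse piece, together with the identification of the right ``area derivative'' to feed into Borel's lemma. Once the estimate is in the form above, the Ahlfors--Borel extraction proceeds exactly as in the classical one-variable setting, so the bulk of the work lies in setting up the higher-dimensional spherical Cauchy--Schwarz correctly.
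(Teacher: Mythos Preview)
Your outline (Stokes, a Cauchy--Schwarz ``length--area'' bound, then an Ahlfors--Borel extraction) is the right skeleton and can be made to work, but the paper's execution is different in three places and this difference dissolves exactly the obstacle you single out.

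\textbf{Test forms.} Instead of real $1$-forms $\alpha$, the paper tests against $\overline\partial\beta$ for smooth $(1,0)$-forms $\beta$; since $\Phi$ is a real current of bidimension $(1,1)$, this suffices for closedness. The gain is that $if^*\beta\wedge\overline{f^*\beta}$ is a genuine positive $(1,1)$-form dominated by $Cf^*\omega$, so the Hermitian Cauchy--Schwarz with weight $\omega_0^{m-1}$ applies without any pointwise ``$|df|^2$'' discussion.

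\textbf{Fubini collapse and a single Cauchy--Schwarz.} Rather than estimating sphere by sphere, the paper rewrites
\[
\int_1^r\frac{dt}{t}\int_{S_t} f^*\beta\wedge\omega_0^{m-1}
=\tfrac12\int_{B_r\setminus B_1} d\ln|z|^2\wedge f^*\beta\wedge\omega_0^{m-1}
\]
and applies Cauchy--Schwarz \emph{once} on the annulus, pairing $f^*\beta$ against $\partial\ln|z|^2$ with respect to the positive $(m-1,m-1)$-form $\omega_0^{m-1}$. The two factors are computed directly: $\int_{B_r\setminus B_1} d\ln|z|^2\wedge d^c\ln|z|^2\wedge\omega_0^{m-1}\asymp\ln r$ and $\int_{B_r\setminus B_1} f^*\omega\wedge\omega_0^{m-1}\le r\,T'_{f,r}(\omega)$. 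So one obtains
\[
|T_{f,r}(\overline\partial\beta)|\le C_\beta\bigl(1+(\ln r)^{1/2}(r\,T'_{f,r}(\omega))^{1/2}\bigr),
\]
with no higher-dimensional length--area lemma needed.

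\textbf{Endgame.} No Borel lemma and no diagonal extraction over test forms: since $T_{f,r}(\omega)$ is increasing with at least logarithmic growth, $\liminf_{r\to\infty}\dfrac{r\ln r\,T'_{f,r}(\omega)}{T_{f,r}(\omega)^2}=0$, and the bound above is uniform in $\beta$ up to the constant $C_\beta$. Hence a single sequence $r_n\to\infty$ works for every $\beta$ simultaneously, and any weak limit along it is closed.

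In short, your plan is viable, but the step you flag as ``the bulk of the work'' disappears once you (i) switch to $(1,0)$ test forms and (ii) collapse the iterated integral to the annulus before applying Cauchy--Schwarz.
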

\begin{proof}
Take $\beta$ a smooth $(1,0)$ form.
$$T_{f,r}(\overline{\partial}\beta)=\int_0^r \frac{dt}{t} \int_{B_t} f^*\overline{\partial}\beta \wedge \omega_0^{m-1}.$$
$$T_{f,r}(\overline{\partial}\beta)-T_{f,1}(\overline{\partial}\beta)=\int_1^r \frac{dt}{t} \int_{B_t} f^*\overline{\partial}\beta \wedge \omega_0^{m-1},$$
which, by Stokes' formula and then by Fubini's theorem, is equal to
$$\int_1^r \frac{dt}{t} \int_{S_t} f^*\beta \wedge \omega_0^{m-1}=\frac{1}{2}\int_{B_r \setminus B_1} d\ln|z|^2 \wedge f^*\beta \wedge \omega_0^{m-1}.$$ Then Cauchy-Schwartz gives

\begin{eqnarray*}
&& |\frac{1}{2}\int_{B_r \setminus B_1} d\ln|z|^2 \wedge f^*\beta \wedge \omega_0^{m-1}|  \\
&&\leq 
\pi |\int_{B_r \setminus B_1} d\ln |z|^2 \wedge d^c\ln|z|^2\wedge \omega_0^{m-1}|^\frac{1}{2}.|\int_{B_r \setminus B_1} f^*\beta \wedge f^*\overline{\beta} \wedge \omega_0^{m-1}|^\frac{1}{2}\\
&& \leq C \ln(r)^\frac{1}{2}. \left(\int_{B_r \setminus B_1} f^*\omega \wedge \omega_0^{m-1}\right)^\frac{1}{2}\\
&& \leq C \ln(r)^\frac{1}{2}. \left(r \frac{d}{dr}T_{f,r}(\omega)\right)^\frac{1}{2}
\end{eqnarray*}

Since $f$ is of maximal rank, $T_{f,r}(\omega)$ is strictly increasing and has at least a logarithmic growth, therefore
$$\liminf_{r \to +\infty} \frac{r \ln r \frac{d}{dr}T_{f,r}(\omega)}{T_{f,r}(\omega)^2}=0,$$
which concludes the proof.
\end{proof}

\begin{remark}
We remark that all this extends to the setting of meromorphic maps $f: \bC^m \to X$. Indeed, in this case, $f^*\eta$ may have singularities but it has coefficients which are locally integrable.
\end{remark}

\begin{remark}
One remarks that the above definition can easily be generalized to associate currents of any bidimension $(k,k)$, $1 \leq k \leq m$ to $f: \bC^m \to X$. But doing this, one loses for $k \neq 1$ the closedness property as discussed in \cite{DeT} and recently in \cite{BuSi}. Moreover, for our problem, Green-Griffiths' philosophy suggests that the geometry of these holomorphic maps should be determined by a line bundle, $K_X$.
\end{remark}

We denote by $[\Phi] \in H^{n-1,n-1}(X,\bR)$ the cohomology class of $\Phi$.
A consequence of the \textit{First Main Theorem} of Nevanlinna theory (see \cite{Sha} p.63) is (see also \cite[Theorem 3.6]{BuSi} for the same observation).
\begin{lemma}\label{pos}
Let $Z \subset X$ an algebraic hypersurface. If $f(\bC^m) \not\subset Z$ then $[\Phi].[Z] \geq 0.$
\end{lemma}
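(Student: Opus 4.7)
The plan is to reduce the statement to the First Main Theorem of Nevanlinna theory, transposed to the several-variables setting for the currents $\Phi_r$ defined above. Concretely, I would compute the pairing $[\Phi]\cdot[Z]$ by representing the cohomology class $[Z]$ by the Chern form of a smooth Hermitian metric on $\mathcal{O}_X(Z)$, and then apply a Jensen-type formula to relate the resulting integral to a non-negative counting term.

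More precisely, I would proceed as follows. First, fix a smooth Hermitian metric $h$ on $\mathcal{O}_X(Z)$ and let $\eta_Z$ denote its (smooth, closed) Chern form, so that $[\eta_Z]=[Z]$ in $H^{1,1}(X,\bR)$. Let $s\in H^0(X,\mathcal{O}_X(Z))$ be a section with divisor $Z$, rescaled so that $\|s\|_h\le 1$ on $X$. Since $f(\bC^m)\not\subset Z$, the pullback $u:=\log\|s\circ f\|_h^2$ is a plurisubharmonic function on $\bC^m$ which is not identically $-\infty$, and the Poincar\'e--Lelong equation gives
$$
dd^c u \;=\; f^*[Z]-f^*\eta_Z.
$$
Next, I would apply the higher-dimensional Jensen--Lelong formula: for any $u$ as above with $u(0)\neq-\infty$ one has, up to constants independent of $r$,
$$
\int_0^r\frac{dt}{t}\int_{B_t}dd^c u\wedge\omega_0^{m-1} \;=\; \int_{S_r} u\,\sigma \;-\; u(0).
$$
Plugging in the Poincar\'e--Lelong identity yields the First Main Theorem in the form
$$
T_{f,r}(\eta_Z) \;=\; N_f(r,Z) \;-\; \int_{S_r} u\,\sigma \;+\; O(1),
$$
where $N_f(r,Z):=\int_0^r\frac{dt}{t}\int_{B_t}f^*[Z]\wedge\omega_0^{m-1}\ge 0$ because the current $f^*[Z]$ is positive (indeed, the hypothesis $f(\bC^m)\not\subset Z$ makes this pullback well defined as a positive closed current, with local multiplicities given by intersection multiplicities). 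The normalization $\|s\|_h\le 1$ makes $u\le 0$, so $-\int_{S_r}u\,\sigma\ge 0$, and we conclude $T_{f,r}(\eta_Z)\ge -C$ for a constant $C$ independent of $r$. Dividing by $T_{f,r}(\omega)$ (which tends to infinity by the growth bound on $T_{f,r}(\omega)$ established in the proof of the Claim) and passing to the limit along the subsequence $r_n$ used to define $\Phi$ gives
$$
[\Phi]\cdot[Z]\;=\;\Phi(\eta_Z)\;=\;\lim_{n\to\infty}\frac{T_{f,r_n}(\eta_Z)}{T_{f,r_n}(\omega)}\;\ge\;0.
$$

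The routine part of the argument is algebraic: once the FMT decomposition is in place, positivity follows immediately. The step I expect to require the most care is justifying the Jensen--Lelong formula in the present multi-variable setting, in particular handling the possibility that $f(0)\in Z$ (so that $u(0)=-\infty$) and verifying that the exceptional set of $f^*[Z]$ does not obstruct Stokes/Fubini manipulations. Both issues can be dispatched by the standard trick of shifting the base point of the exhaustion $B_t$ to a generic point at which $f$ avoids $Z$ (absorbed into the additive $O(1)$), or by approximating $u$ by $\max(u,-k)$ and letting $k\to\infty$; in either case only constants independent of $r$ are affected, which is all one needs after dividing by $T_{f,r}(\omega)\to+\infty$.
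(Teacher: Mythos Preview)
Your proposal is correct and follows essentially the same route as the paper: represent $[Z]$ by the Chern form of a metric on $\mathcal{O}_X(Z)$, apply Poincar\'e--Lelong and the Jensen/Stokes identity to obtain the First Main Theorem decomposition $T_{f,r}(\eta_Z)=N_f(r,Z)+m_f(r,Z)+O(1)$, normalize so that $\|s\|\le 1$ to make the proximity term nonnegative, and divide by $T_{f,r}(\omega)$ before passing to the limit along $r_n$. The paper simply assumes $f(0)\notin Z$ rather than discussing the base-point shift or the approximation $\max(u,-k)$, but otherwise the arguments coincide.
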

\begin{proof}
We follow the proof of the First Main theorem given in \cite{Sha} p.61-63. Let $s$ be a section of $\mathcal{O}(Z)$ such that $Z=\{s=0\}$ and fix a metric $h$ on the line bundle $\mathcal{O}(Z)$. Then by the Poincar\'e-Lelong formula
$$dd^c\ln||s||^2=Z-\Theta_h,$$
in the sense of currents, where $\Theta_h$ is the curvature associated to $h$. Taking pullbacks by $f$, assuming $f(0) \not\in Z$, and integrating, we obtain
$$\int_{B_t} dd^c\ln ||s \circ f||^2 \wedge \omega_0^{m-1}=-\int_{B_t} f^*(\Theta_h) \wedge \omega_0^{m-1}+ \int_{f^{-1}(Z)\cap B_t} \omega_0^{m-1}.$$
By Stokes formula, one has
$$\int_{B_t} dd^c\ln ||s \circ f||^2 \wedge \omega_0^{m-1}=\int_{S_t}d^c\ln ||s \circ f||^2 \wedge \omega_0^{m-1},$$
$$\int_0^r\frac{dt}{t} \int_{S_t}d^c\ln ||s \circ f||^2 \wedge \omega_0^{m-1}=\int_{S_r}\ln ||s\circ f||\sigma -\ln ||s\circ f (0)||,$$
where $\sigma$ is the Poincar\'e form.
Therefore
$$T_{f,r}(\Theta_h)=\int_0^r\frac{dt}{t}\int_{f^{-1}(Z)\cap B_t} \omega_0^{m-1}+\int_{S_r}\ln \frac{1}{||s\circ f ||} \sigma + \ln ||s \circ f(0)||.$$
This can be written using the usual notations of Nevanlinna theory (see e.g. \cite{Sha})
$$T_{f,r}(\Theta_h)=N_f(Z,r)+m_f(Z,r)+O(1).$$
We can suppose $||s|| \leq 1$ therefore $m_f(Z,r)\geq 0$ and
$$\Phi(\Theta_h)=\lim_{r_n \to +\infty} \frac{T_{f,r_n}(\Theta_h)}{T_{f,r_n}(\omega)}\geq 0.$$
\end{proof}

\section{Tautological inequalities}

\subsection{The compact case}
Let $X_1:=G(m,TX)$ be the Grassmannian bundle, $\pi: X_1 \to X$ the natural projection, $S_1$ the tautological bundle on $X_1$ and $L:=\overset{m} \bigwedge S_1$. 

For $f:\bC^m \to X$ a holomorphic mapping of maximal rank, we have a natural lifting $f_1: \bC^m \to X_1$ defined by $f_1=(f , [\frac {\partial f}{\partial t_{1}}\wedge \dots\wedge \frac {\partial f}{\partial t_{m}}])$. Then we can associate a closed positive current $\Phi_1$ to $f_1$. 
The tautological inequality becomes

\begin{theorem}\label{taut} With the notation above we have 
$$[\Phi_1].L\geq 0.$$
\end{theorem}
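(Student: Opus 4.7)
The plan is to mimic the one-dimensional tautological inequality of McQuillan--Brunella by exhibiting a canonical ``Jacobian'' section of $f_1^*L$ on $\bC^m$ and running a First Main Theorem argument against the divisor cut out by this section. The point is that the counting contribution will be automatically non-negative, so everything comes down to controlling a proximity term.

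\textbf{Step 1 (Canonical section).} By construction of the lift $f_1$, at every point $z\in\bC^m$ of maximal rank of $f$ the fibre $(f_1^*S_1)_z\subset T_{f(z)}X$ coincides with the image of $df_z$. Thus $df$ factors through $f_1^*S_1$, and taking $m$-th exterior powers yields a morphism $\bigwedge^m T\bC^m\to f_1^*L$, i.e.\ (after trivialising $\bigwedge^m T\bC^m$ by the standard coordinates) a global holomorphic section $J\in H^0(\bC^m, f_1^*L)$ whose zero divisor $R_f$ is the (effective) ramification divisor of $f$.

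\textbf{Step 2 (First Main Theorem decomposition).} Fix a smooth Hermitian metric on $L$ with curvature form $\Theta$. Poincar\'e--Lelong gives, on $\bC^m$,
$$ dd^c\log\|J\|^2 = [R_f] - f_1^*\Theta. $$
Integrating against $\omega_0^{m-1}/t$ over $B_t$, applying Stokes and then integrating $t$ from $0$ to $r$ exactly as in the proofs of Claim 2.1 and Lemma 2.3, one obtains
$$ T_{f_1,r}(\Theta) \;=\; N_f(R_f,r) \,+\, m_f(r) \,+\, O(1), $$
with $N_f(R_f,r)=\int_0^r \tfrac{dt}{t}\int_{R_f\cap B_t}\omega_0^{m-1}\ge 0$ and proximity term $m_f(r)=-\int_{S_r}\log\|J\|\,\sigma$.

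\textbf{Step 3 (Proximity estimate).} Since $N_f(R_f,r)\ge 0$, the claim reduces to showing that $m_f(r_n)$ is negligible compared to $T_{f_1,r_n}(\omega_1)$ along a suitable subsequence. By Jensen's inequality (concavity of $\log$) and Cauchy--Schwarz on $S_r$,
$$ -m_f(r) \;=\; \int_{S_r}\log\|J\|\,\sigma \;\le\; \tfrac12\log\int_{S_r}\|J\|^2\,\sigma + O(1). $$
Now $\|J\|^2$ is, up to a smooth bounded factor, the density of $f^*\omega^m$ with respect to a Euclidean volume form on $\bC^m$, so a computation entirely analogous to the Cauchy--Schwarz estimate used in the proof of Claim 2.1 bounds $\int_{S_r}\|J\|^2\sigma$ by a constant multiple of $r\tfrac{d}{dr}T_{f,r}(\omega^m)$, which in turn is controlled polynomially by $T_{f,r}(\omega)$ and its logarithmic derivative.

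\textbf{Step 4 (Selection of the subsequence).} A standard Nevanlinna-type calculus lemma then produces an exceptional set of finite logarithmic measure outside of which $\log\bigl(r\tfrac{d}{dr}T_{f,r}(\omega)\bigr)=o\bigl(T_{f,r}(\omega)\bigr)$. Since the construction of $\Phi_1$ already required extracting a subsequence, we may shrink this subsequence further so that the $r_n$ avoid such an exceptional set; combined with the trivial inequality $T_{f_1,r}(\omega_1)\ge c\,T_{f,r}(\omega)$ (valid because $\omega_1$ dominates $\pi^*\omega$), we deduce $m_f(r_n)/T_{f_1,r_n}(\omega_1)\to 0$, hence $[\Phi_1]\cdot L\ge 0$. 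The main obstacle is precisely Step 3 together with the adaptation of the logarithmic-derivative lemma to several variables: one has to keep track of the degenerate behaviour of $\omega_0$ at the origin and make sure the normalisation by $T_{f_1,r_n}(\omega_1)$ in the denominator absorbs the polynomial growth appearing in the Jensen bound.
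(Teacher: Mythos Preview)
Your overall architecture (canonical Jacobian section of $f_1^*L$, Poincar\'e--Lelong, discard the non-negative counting term, bound the proximity by a calculus-lemma argument) is exactly the paper's, and Steps 1, 2 and 4 are fine. The gap is in Step 3.

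After Jensen you arrive at $\frac{1}{2}\log\int_{S_r}\|J\|^2\,\sigma$, and then assert that $\int_{S_r}\|J\|^2\sigma$ is polynomially controlled by $T_{f,r}(\omega)$. But $\|J\|^2=\xi$ is the density of $f^*\omega^m$ against Euclidean volume, so after the two differentiations of the calculus lemma you are left comparing the \emph{volume} characteristic $\int_0^r t^{1-2m}\int_{B_t}f^*\omega^m$ with the \emph{area} characteristic $T_{f,r}(\omega)=\int_0^r t^{-1}\int_{B_t}f^*\omega\wedge\omega_0^{m-1}$. For $m\ge 2$ these are genuinely different order functions in several-variable Nevanlinna theory and there is no a priori polynomial bound of the first by the second; the ``Cauchy--Schwarz estimate from Claim 2.1'' does not provide one.

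The paper closes this gap by a small but decisive twist in the Jensen step: write $\log\xi=m\log\xi^{1/m}$ and apply concavity to $\xi^{1/m}$ rather than to $\xi$, obtaining
$$-T_{f_1,r}(\Theta)\le C+\tfrac{m}{2}\log\int_{S_r}\xi^{1/m}\sigma.$$
Now the pointwise Hadamard (arithmetic--geometric mean) inequality for the eigenvalues of the pullback metric gives
$$\xi^{1/m}\,\varphi_0^m\ \le\ f^*\omega\wedge\varphi_0^{m-1},$$
which after the two applications of the calculus lemma yields directly $\tilde T(r)\le T_{f,r}(\omega)$, hence $-T_{f_1,r}(\Theta)\le O(\log T_{f,r}(\omega))$. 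So the missing idea is precisely this $\xi^{1/m}$/Hadamard step; once you insert it, your outline becomes the paper's proof.
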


\begin{proof} The K\"ahler form
$\omega$ induces a metric on $L$ of curvature $\Theta$. The map
$\frac {\partial f}{\partial t_{1}}\wedge \dots\wedge \frac {\partial f}{\partial t_{m}}:\bC^m \to \overset{m} \bigwedge T_X$ defines a section $s\in H^0(\bC^m,f_1^*L)$. By the Poincar\'e-Lelong formula, we have:
$$dd^c\ln||s||^2=(s=0)-f_1^*\Theta.$$ Therefore, denoting $\xi:=\left\|\frac {\partial f}{\partial t_{1}}\wedge \dots\wedge \frac {\partial f}{\partial t_{m}}\right\|^2$, we have
$$-T_{f_1,r}(\Theta) \leq \int_0^r\frac{dt}{t} \int_{B_t} dd^c\ln \xi \wedge \omega_0^{m-1},$$ and argueing as in the proof of Lemma \ref{pos}, and using moreover the convexity of the logarithm, we obtain
$$-T_{f_1,r}(\Theta) \leq C+ \frac{m}{2}\ln \int_{S_r} \xi^{\frac{1}{m}}  \sigma.$$
Using the Euclidean metric form $\varphi_0$ which satisfies $\frac{\varphi_0^m}{m!}=r^{2m-1}\sigma \wedge dr$, we have
$$\int_{S_r} \xi^{\frac{1}{m}}  \sigma=\frac{1}{r^{2m-1}}\frac{d}{dr}\frac{1}{m!}\int_{B_r}  \xi^{\frac{1}{m}} \phi_0^m=\frac{1}{m!} \frac{1}{r^{2m-1}} \frac{d}{dr} \left(r^{2m-1} \frac{d\tilde{T}}{dr}\right),$$ where we denote $$\tilde{T}(r):=\int_0^r\frac{dt}{t^{2m-1}}\int_{B_t} \xi^{\frac{1}{m}} \phi_0^m.$$
Now, we use the following classical lemma in Nevanlinna theory (see e.g. \ref{GK}).
\begin{lemma}
Let $F: \bR^+ \to \bR^+$ be a postive, increasing, derivable function. For every $\epsilon > 0$, there exists $E \subset \bR$ satisfying $\int_E dr \leq \int_{1+\epsilon}^{+\infty}\frac{1}{x \ln^{1+\epsilon}(x)}dx<+\infty$ such that for every $x\not\in E,$
$$F'(x)\leq F(x)\ln^{1+\epsilon}(F(x)).$$
\end{lemma}
We apply this lemma to the function $r^{2m-1} \frac{d\tilde{T}}{dr},$ and we obtain that
$$\frac{d}{dr} \left(r^{2m-1} \frac{d\tilde{T}}{dr}\right) \leq r^{2m-1} \frac{d\tilde{T}}{dr}\ln^{1+\epsilon}(r^{2m-1} \frac{d\tilde{T}}{dr}),$$ outside $E$. The lemma applied to $\tilde{T}$ gives
$$\frac{d\tilde{T}}{dr}\leq \tilde{T}(r)\ln^{1+\epsilon}(\tilde{T}(r)),$$
outside $E$.
Now, a consequence of the classical Hadamard inequality is
$$\xi^{\frac{1}{m}} \phi_0^m \leq f^*\omega \wedge \phi_0^{m-1}.$$ Therefore we obtain
$$\tilde{T}(r)\leq \int_0^r\frac{dt}{t^{2m-1}}\int_{B_t} f^*\omega \wedge \phi_0^{m-1}=T_{f,r}(\omega).$$
Finally, these inequalities give
\begin{equation}\label{intaut}
-T_{f_1,r}(\Theta) \leq O(\ln(T_{f,r}(\omega)),
\end{equation}
outside $E$, which proves the theorem.
\end{proof}

\subsection{The logarithmic case}
Let $X$ be a smooth projective variety, $D \subset X$ a simple normal crossing divisor, $\omega$ a K\"ahler metric on $X$ and $f: \bC^m \to X$ a holomorphic mapping of maximal rank such that $f(\bC^m) \not \subset D$. We set  $X_1:=\bP(\overset{m} \bigwedge  T_X(-\log(D)))$. We have a natural map $f_1: \bC^m \to X_1$ and a tautological line bundle $L:=\mathcal{O}_{X_1}(1)$.

\begin{theorem}\label{logtaut}

$T_{f_1,r}(L) \leq N^1_{f}(D,r)+O(\ln T_{f,r}(\omega)) \|$. 

\end{theorem}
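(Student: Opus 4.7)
My plan is to adapt the proof of Theorem \ref{taut} to the logarithmic setting, the new feature being that the tangent $m$-vector must be interpreted as a \emph{meromorphic} section of the relevant tautological line bundle, and its pole divisor along $f^{-1}(D)$ will produce the truncated counting function on the right-hand side.

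First, I would examine the local behaviour of $s := \frac{\partial f}{\partial t_1}\wedge\cdots\wedge \frac{\partial f}{\partial t_m}$ in a logarithmic frame. Around a point of $D$ with local equation $z_1\cdots z_k=0$, the sheaf $T_X(-\log D)$ is freely generated by $z_i\partial_{z_i}$ for $i\le k$ and $\partial_{z_j}$ for $j>k$. Writing $g_i:=f^*z_i$, the vector $\partial f/\partial t_j$ expanded in this frame carries coefficients $\partial_{t_j}\log g_i$ in front of $z_i\partial_{z_i}$, producing a simple pole along each component $\{g_i=0\}$ of $f^{-1}(D)$. Since in an $m$-fold wedge any fixed logarithmic generator can appear at most once, $s$ naturally corresponds to a meromorphic section of the tautological sub $f_1^*\mathcal{O}_{X_1}(-1)\hookrightarrow f^*\bigwedge^m T_X(-\log D)$ whose divisor equals
\[
\div(s)=(s=0)-[f^{-1}(D)_{\mathrm{red}}],
\]
with $(s=0)$ an effective divisor of zeros of the logarithmic wedge.

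Next I would apply the Poincar\'e-Lelong formula. The K\"ahler form $\omega$ induces a Hermitian metric on $L=\mathcal{O}_{X_1}(1)$ with curvature $\Theta$ (so the dual metric on $\mathcal{O}_{X_1}(-1)$ has curvature $-\Theta$), giving
\[
dd^c\log\|s\|^2=(s=0)-[f^{-1}(D)_{\mathrm{red}}]+f_1^*\Theta.
\]
Integrating against $\omega_0^{m-1}$ on $B_t$, multiplying by $dt/t$, integrating from $0$ to $r$ and discarding the nonnegative contribution of $(s=0)$, I obtain
\[
T_{f_1,r}(\Theta)\le N^1_f(D,r)+\int_0^r\frac{dt}{t}\int_{B_t}dd^c\log\|s\|^2\wedge\omega_0^{m-1},
\]
where $N^1_f(D,r):=\int_0^r\frac{dt}{t}\int_{B_t}[f^{-1}(D)_{\mathrm{red}}]\wedge\omega_0^{m-1}$ is exactly the truncated counting function; truncation at multiplicity one reflects the fact that the simple-pole structure of $s$ sees only the reduced preimage.

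Finally I would bound the remaining integral by the same chain of estimates as in Theorem \ref{taut}: apply Stokes to reduce to a boundary integral on $S_r$, use the concavity of the logarithm and the Hadamard inequality $\xi^{1/m}\phi_0^m\le f^*\omega\wedge\phi_0^{m-1}$, and conclude with the Nevanlinna lemma on positive increasing differentiable functions. This gives an error term of $O(\log T_{f,r}(\omega))$ holding outside an exceptional set $E\subset\bR^+$ of finite Lebesgue measure, encoded by the symbol $\|$. I expect the main technical obstacle to be justifying Poincar\'e-Lelong and Stokes at the pole locus of $\log\|s\|^2$ along $f^{-1}(D)$ as well as at its zero locus (where $\log\|s\|^2\to-\infty$); both singularities are logarithmic and hence locally integrable, but a completely rigorous treatment may require first passing to a log resolution of $f^{-1}(D)$ to ensure SNC behaviour and checking that each of the estimates is stable under such modifications.
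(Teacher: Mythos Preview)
Your Poincar\'e--Lelong framing is sound: viewing $s=\partial_{t_1}f\wedge\cdots\wedge\partial_{t_m}f$ as a meromorphic section of $f_1^*\mathcal{O}_{X_1}(-1)$ with pole divisor bounded by $[f^{-1}(D)_{\rm red}]$ is a legitimate way to produce the truncated counting function, and is in fact a cleaner alternative to the paper's compactification $Y=\bP\bigl(\bigwedge^m T_X(-\log D)\oplus\mathcal{O}_X\bigr)$ followed by the First Main Theorem and a blow-up. The real obstacle is not the integrability issue you flag at the end, but the final estimate, which you dispatch too quickly.

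The Hadamard inequality you invoke, $\xi^{1/m}\phi_0^m\le f^*\omega\wedge\phi_0^{m-1}$, holds only when $\xi$ is the squared length of $s$ measured by the metric that $\omega$ induces on $\bigwedge^m T_X$. But for the Poincar\'e--Lelong identity to involve the curvature of $\mathcal{O}_{X_1}(1)$ on the \emph{logarithmic} bundle $X_1=\bP\bigl(\bigwedge^m T_X(-\log D)\bigr)$ together with the pole term, the norm $\|s\|$ must be taken in a metric on $T_X(-\log D)$, i.e.\ one in which the frame $z_i\partial_{z_i}$ has length of order $1$. With such a metric Hadamard yields $\xi_{\log}^{1/m}\phi_0^m\le f^*\omega^{sm}\wedge\phi_0^{m-1}$, where $\omega^{sm}$ is a smooth metric on $T_X(-\log D)$ regarded as a $(1,1)$-form on $X$; this form has poles of type $\|s_i\|^{-2}d\|s_i\|\wedge d^c\|s_i\|$ along $D$, and $\int_0^r\frac{dt}{t}\int_{B_t}f^*\omega^{sm}\wedge\phi_0^{m-1}$ is \emph{not} $O(T_{f,r}(\omega))$ in general. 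So the chain of estimates from Theorem~\ref{taut} does not close as written.

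This incompatibility is precisely the analytic content of the logarithmic case. The paper resolves it by introducing, besides a smooth metric on $T_X(-\log D)$, the Poincar\'e-type singular metric
\[
\tilde\omega=\omega+\sum_i\frac{d\|s_i\|\wedge d^c\|s_i\|}{\|s_i\|^2(\ln\|s_i\|)^2}.
\]
On the one hand $\tilde\omega$ differs from a smooth logarithmic metric only by factors whose logarithms contribute $O(\ln T_{f,r}(\omega))$ after integration; on the other hand it satisfies
\[
\tilde\omega\le C\omega-\sum_i dd^c\ln\bigl(\ln^2\|s_i\|\bigr),
\]
so that $\int_0^r\frac{dt}{t}\int_{B_t}f^*\tilde\omega\wedge\omega_0^{m-1}$ is bounded by $C\,T_{f,r}(\omega)$ plus a boundary term $-\int_{S_r}\ln(\ln^2\|s_i\circ f\|)\,\sigma$ which is bounded above once $\|s_i\|<\delta<1$. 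The extra $(\ln\|s_i\|)^2$ in the denominator is exactly what tames the would-be divergent contribution. Your outline becomes a correct proof once you insert this device (or an equivalent one) between the Poincar\'e--Lelong step and the calculus-lemma step; without it the argument only yields an error of size $O(T_{f,r}(\omega))$, which is vacuous.
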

(As usual the symbol $\|$ means that the inequality holds outside a set of finite Lesbesgue measure). 
\begin{proof}
We follow the approach given in \cite{Gas} for the $1$-dimensional case.
We write $D=\sum D_i$. Let $s_i$ be sections of the $D_i$ and we choose hermitian metrics on the associated line bundles. We consider a smooth metric on $T_X(-\log(D))$ induced by a singular $(1,1)$-form
$$\omega^{sm}:=A\omega+ \sum \frac{d ||s_i|| \wedge d^c ||s_i||}{||s_i||^2}.$$

We also consider a singular hermitian metric on $T_X(-\log(D))$ induced by

$$\tilde{\omega}:= \omega + \sum \frac{d ||s_i|| \wedge d^c ||s_i||}{||s_i||^2(\ln ||s_i||)^2}.$$

The map $f_1$ together with the map $\overset{m} \bigwedge T_{\bC^m} \to \mathcal{O}_X$ given by $\frac{\partial}{\partial t_1}\wedge \dots \wedge \frac{\partial}{\partial t_m} \to 1$ gives a map $g: \bC^m \to Y:=\bP(\overset{m} \bigwedge T_X(-\log(D)) \oplus \mathcal{O}_X).$ Denote by $M$ the tautological line bundle over $Y$.

We have an inclusion $X_1 \subset Y$ and $\mathcal{O}_{Y}(X_1)=M.$ The forms $\omega^{sm}$ and $\tilde{\omega}$ induce respectively  smooth and singular metrics on $M$ and $L$.

Now, we apply the First Main Theorem to $g$ and $M$ with respect to the singular metric:
$$T_{g,r}(c_1(M)^s)=N_{g}(X_1,r)+m_{g}(X_1,r)+O(1).$$ The image of 
$g$ meets $X_1$ only over $D$ with multiplicity at most $1$, therefore
$$N_{g}(X_1,r) \leq N^1_{f}(D,r).$$
We consider the blow-up $p: Z \to Y$ along the zero section of $\overset{m} \bigwedge T_X(-\log(D))$. Then we have a holomorphic map $q: Z \to X_1$ and
$$q^*L=p^*M(-E)$$ where $E$ is the exceptional divisor in $Z$. Therefore we obtain
$$T_{f_1,r}(c_1(L)^s)\leq N^1_{f}(D,r)+m_{g}(X_1,r)-m_{g}(E,r)+O(1).$$
Comparing the two metrics, we see that
$$T_{f_1,r}(c_1(L)^{sm})\leq T_{f_1,r}(c_1(L)^s) + O(\ln T_{f,r}(\omega)).$$
We can suppose $m_{g}(E,r)\geq 0$ so to finish the proof we just have to bound 
$$m_{g}(X_1,r)=\int_{S_r} \ln \frac{1}{||s \circ f ||} \sigma,$$
where $s$ is a holomorphic section of $\mathcal{O}_{Y}(X_1)$. Let 
$$\xi:=f^*(\tilde{\omega}^m)\left(\frac {\partial }{\partial t_{1}}\wedge \dots\wedge \frac {\partial }{\partial t_{m}}\right)$$
where $\tilde{\omega}^m$ is the metric induced on $\overset{m} \bigwedge T_X(-\log(D))$ by $\tilde{\omega}$. To conclude we need to find an upper bound for $$\int_{S_r} \ln{(\xi)} \sigma.$$ Following the end of the proof of Theorem \ref{taut}, we are reduced to find a bound for
$$\int_0^r\frac{dt}{t}\int_{B_t} f^*\tilde{\omega} \wedge \omega_0^{m-1}.$$
Recall that we have the following fomula
$$dd^c\ln(\ln||s_i||^2)=\frac{dd^c \ln ||s_i||^2}{\ln ||s_i||^2}-\frac{d \ln ||s_i||^2 \wedge d^c \ln ||s_i||^2}{(\ln ||s_i||^2)^2}.$$ Therefore 
$$\tilde{\omega} \leq C\omega-\sum dd^c\ln(\ln^2||s_i||)$$
for a constant $C$, and finally, by Stokes formula
$$\int_0^r\frac{dt}{t}\int_{B_t} f^*\tilde{\omega} \wedge \omega_0^{m-1} \leq O(\ln T_{f,r}(\omega))-\sum \int_{S_r} \ln(\ln^2||s_i||)  \sigma.$$

Since we may assume $||s_i||<\delta<1$, the theorem is proved.
\end{proof}

\begin{remark}
We remark that, in the particular case $m= \dim X$, we recover the second main theorems of Griffiths-King \cite{GK}.
\end{remark}

\subsection{Refined inequalities}
In the case $m=1$ of entire curves, McQuillan \cite{McQ0} shows that one can include in the tautological inequality the defect with respect to a finite number of reduced points. Following the approach of \cite{Siu04} to this question, we would like to show that in our situation the previous inequalities can be made more precise by including the defect with respect to submanifolds of codimension at least $2$.

\begin{lemma}\label{bup}
Let $\Delta \subset \bC^n$ be an $n$-dimensional disc with holomorphic coordinates $z_1,\dots, z_n$, $V \subset \Delta$ be the locus $z_{k+1}=\dots=z_n=0$ and $\pi: \tilde{\Delta} \to \Delta$ be the blow-up of $\tilde{\Delta}$ along $V$. Let $E=\pi^{-1}(V)$ be the exceptional divisor. Then $$\pi^*\left(\Omega^p_{\Delta}\right) \subset \Omega^p_{\tilde{\Delta}}(\log E) \otimes \mathcal{I}_E^{p-k},$$
for $p \geq k+1$.
\end{lemma}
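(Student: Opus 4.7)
My plan is to prove the statement by an explicit local computation on a standard affine chart of the blow-up, and then note that the charts covering $\tilde\Delta$ are all symmetric under a relabeling of the normal coordinates, so the same argument suffices.

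First, I would fix a chart. The blow-up $\pi:\tilde\Delta\to\Delta$ of $V=\{z_{k+1}=\dots=z_n=0\}$ is covered by $n-k$ affine charts. On one such chart, introduce coordinates $(z_1,\dots,z_k,w,u_{k+2},\dots,u_n)$ in which $\pi$ is given by
$$
z_j=z_j\ (j\le k),\qquad z_{k+1}=w,\qquad z_j=w\,u_j\ (j\ge k+2).
$$
The exceptional divisor is $E=\{w=0\}$, so $\cI_E=(w)$, and $\Omega^1_{\tilde\Delta}(\log E)$ is freely generated by $dz_1,\dots,dz_k,\,dw/w,\,du_{k+2},\dots,du_n$.

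Next I would compute the pullbacks of the coordinate $1$-forms. Clearly $\pi^\ast dz_i=dz_i$ for $i\le k$, while
$$
\pi^\ast dz_{k+1}=dw=w\cdot\frac{dw}{w},\qquad \pi^\ast dz_j=w\,du_j+u_j\,dw=w\left(du_j+u_j\,\frac{dw}{w}\right)\ (j\ge k+2).
$$
In particular, for every index $j\in\{k+1,\dots,n\}$ one has
$$
\pi^\ast dz_j\ \in\ \cI_E\cdot\Omega^1_{\tilde\Delta}(\log E),
$$
whereas for $i\le k$ the pullback $\pi^\ast dz_i$ lies in $\Omega^1_{\tilde\Delta}(\log E)$ without the extra vanishing factor.

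Now I would assemble the estimate for $p$-forms. Any local section of $\Omega^p_\Delta$ is an $\cO_\Delta$-linear combination of wedge products $dz_{i_1}\wedge\cdots\wedge dz_{i_p}$ with $i_1<\cdots<i_p$. Let $\alpha$ be the number of indices in $\{1,\dots,k\}$ and $\beta$ the number of indices in $\{k+1,\dots,n\}$, so $\alpha+\beta=p$. Since there are only $k$ tangential directions, $\alpha\le k$, hence
$$
\beta\ =\ p-\alpha\ \ge\ p-k\ \ge\ 1.
$$
Pulling back such a monomial and using the computation above, each of the $\beta$ normal factors contributes a factor of $\cI_E$ times an element of $\Omega^1_{\tilde\Delta}(\log E)$, while the $\alpha$ tangential factors contribute elements of $\Omega^1_{\tilde\Delta}(\log E)$. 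Therefore
$$
\pi^\ast\bigl(dz_{i_1}\wedge\cdots\wedge dz_{i_p}\bigr)\ \in\ \cI_E^{\beta}\cdot\Omega^p_{\tilde\Delta}(\log E)\ \subset\ \cI_E^{p-k}\cdot\Omega^p_{\tilde\Delta}(\log E),
$$
and the same inclusion holds after multiplication by the pulled-back holomorphic coefficient $f_I\circ\pi$.

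Finally, I would remark that the other standard charts of $\tilde\Delta$ (obtained by interchanging the role of $z_{k+1}$ with some other $z_j$, $j\ge k+2$) are treated identically by relabeling. As the inclusion is sheaf-theoretic, verifying it on an open cover of $\tilde\Delta$ is enough; and on the open set $\tilde\Delta\setminus E$, where $\pi$ is an isomorphism, there is nothing to prove. I do not anticipate a serious obstacle: the content is entirely the elementary observation that, for $p\ge k+1$, a $p$-form on $\Delta$ must involve at least $p-k$ of the normal differentials $dz_{k+1},\dots,dz_n$, each of which contributes one unit of vanishing along $E$ upon pulling back to the blow-up.
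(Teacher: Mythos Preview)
Your proof is correct and follows essentially the same approach as the paper: both work in a standard affine chart of the blow-up, compute the pullbacks of the coordinate $1$-forms to see that each normal differential $dz_j$ ($j\ge k+1$) pulls back to $w$ times a logarithmic $1$-form, and then observe that a basis $p$-form with $p\ge k+1$ must involve at least $p-k$ normal differentials. Your write-up is slightly more explicit about the counting argument and the symmetry among charts, but the substance is identical.
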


\begin{proof} Notice that 
$\tilde{\Delta} \subset \Delta \times \bP^{n-k-1}$ is defined by $\tilde{\Delta}=\{(z,l) : z_il_j=z_jl_i, k+1 \leq i,j \leq n\}$. In the affine chart defined by $l_j \neq 0$ we take the coordinates $z_i, z_j, \frac{l_q}{l_j}$ for $1\leq i \leq k$, $k+1 \leq q \leq n$ and $q\neq j$. The exceptional divisor $E$ is defined by $z_j=0$. We have $\pi^*(dz_i)=dz_i$ for  $1\leq i \leq k$,  $\pi^*(dz_j)=dz_j=z_j\left(\frac{dz_j}{z_j}\right)$ and $\pi^*(dz_q)=z_j d\left(\frac{l_q}{l_j}\right)+\frac{l_q}{l_j}dz_j=z_j\left(d\left(\frac{l_q}{l_j}\right)+\frac{l_q}{l_j}\frac{dz_j}{z_j}\right).$ Now take $\omega:=dz_{i_1}\wedge \dots \wedge dz_{i_p}$, where $p \geq k+1$. From the previous relations, it is obvious that $\pi^*\omega$ is a differential $p$-form with logarithmic poles on $E$ whose coefficients vanish on $E$ to order at least $p-k$.
\end{proof}

The next tool we need is the Lemma on logarithmic derivatives. For a meromorphic function on $\bC$ it  was done by Nevanlinna \cite{Nev39}, and then generalized on $\bC^m$ by Vitter \cite{Vi77} (see also \cite{No85} and \cite{Yam} for other generalizations).

\begin{theorem}\label{derlog}
Let $f$ be a meromorphic function on $\bC^m$. Then
$$\int_{S_r} \ln^+ \left |\frac{\frac{\partial}{\partial t_i}f}{f}\right | \sigma \leq O( \ln T_{f,r}+ \ln r).$$
\end{theorem}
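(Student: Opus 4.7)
The plan is to adapt the classical proof of the lemma on logarithmic derivatives, following Vitter's several-variable generalization of Nevanlinna's original argument. The strategy has three stages: (i) use the concavity of $\log$ to bring the spherical average inside the logarithm; (ii) identify the resulting integral with a differential expression in $r$ involving the Nevanlinna characteristic, via Jensen-type formulas in $\bC^m$; (iii) apply the calculus lemma stated in the proof of Theorem~\ref{taut} twice to conclude.

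First I would use the elementary inequality $\ln^{+}x \le \tfrac{1}{2}\ln(1+x^{2})$ together with the concavity of $\log$ to obtain
$$
\int_{S_r}\ln^{+}\!\left|\frac{\partial f/\partial t_{i}}{f}\right|\sigma
\le \frac{1}{2}\ln\!\int_{S_r}\!\left|\frac{\partial f/\partial t_{i}}{f}\right|^{2}\sigma + O(1),
$$
thereby reducing the problem to bounding $\int_{S_r}|\partial_{i}f/f|^{2}\sigma$ by a polynomial expression in $T_{f,r}$ and $r$. Viewing $f$ as a meromorphic map $\bC^{m}\to\bP^{1}$ with $z\mapsto[1:f(z)]$, I would decompose $|\partial_{i}f|^{2}/|f|^{2}$ as the product of $|\partial_{i}f|^{2}/(1+|f|^{2})^{2}$ (essentially a diagonal coefficient of the pullback $f^{*}\omega_{FS}$) and the positive factor $(1+|f|^{2})^{2}/|f|^{2}$, whose logarithmic spherical mean is controlled by the proximity functions $m_{f}(\{0\},r)+m_{f}(\{\infty\},r)\le T_{f,r}+O(1)$ via the First Main Theorem used in Lemma~\ref{pos}.

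Second, using the Poincar\'e--Lelong formula applied to $\ln|f|^{2}$ and Stokes' theorem on $B_{t}$, together with the relation $\phi_{0}^{m}/m!=r^{2m-1}\sigma\wedge dr$ exploited in the proof of Theorem~\ref{taut}, I would convert the remaining contribution into an expression of the shape
$$
\int_{S_r}\left|\frac{\partial f/\partial t_{i}}{f}\right|^{2}\sigma
\le \frac{C}{r^{2m-1}}\frac{d}{dr}\!\left(r^{2m-1}\frac{dU(r)}{dr}\right)+O(T_{f,r}+\log r),
$$
where $U(r)$ is, up to lower order terms, the Nevanlinna characteristic $T_{f,r}$ itself. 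Applying the calculus lemma of Theorem~\ref{taut} twice---once to $U$ and once to $r^{2m-1}U'$---then yields, outside a set of finite Lebesgue measure, a bound of the form $T_{f,r}^{O(1)}\cdot r^{O(1)}$ for the inner integral.

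Third, taking $\tfrac{1}{2}\log$ of this bound gives the announced estimate $O(\ln T_{f,r}+\ln r)$. The main obstacle is the second step: while the architecture mirrors Nevanlinna's one-variable argument, the bookkeeping on the sphere $S_{r}\subset\bC^{m}$ is heavier, as one must carefully juggle $\sigma$, $\omega_{0}$ and the Euclidean form $\phi_{0}$ so that the higher-dimensional Jensen formula produces precisely the differential expression in $r$ to which the calculus lemma applies cleanly. All finer details can then be extracted from Vitter's paper.
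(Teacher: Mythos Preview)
The paper does not give a proof of Theorem~\ref{derlog}; it simply states the result and attributes it to Nevanlinna in one variable and to Vitter in several variables, with further references to Noguchi and Yamanoi. There is therefore no paper proof to compare your proposal against.

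Your sketch follows the standard Nevanlinna--Vitter architecture (concavity of the logarithm, reduction to a second radial derivative of a characteristic-type function, and two applications of the calculus lemma from the proof of Theorem~\ref{taut}), and you explicitly defer the delicate bookkeeping to Vitter's paper, which is exactly what the authors themselves do. One small caution: in your second step the multiplicative decomposition of $|\partial_i f|^2/|f|^2$ is not by itself enough to bound the \emph{integral} of the product, since controlling the logarithmic spherical mean of the factor $(1+|f|^2)^2/|f|^2$ does not directly control that factor inside $\int_{S_r}(\cdot)\,\sigma$; the actual argument instead passes through a pointwise inequality relating $|\partial_i f/f|^2$ to a Laplacian of $\log(1+|f|^2)$ (plus lower-order terms) before integrating, which is how the second radial derivative of $U(r)$ genuinely appears. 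As you say, this is precisely the step where the several-variable details live, and Vitter handles it.
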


Now, we can prove the following

\begin{theorem}
Let $H$ be an ample line bundle on a projective manifold $X$ of dimension $n$. Let $Z \subset X$ be a submanifold of codimension $n-k \geq 2$. Let $f: \bC^m \to X$ be a holomorphic map of maximal rank. Let $\alpha$ be a positive rational number and $l$ be a positive integer such that $\alpha l$ is an integer. Let $\sigma \in H^0(X,S^l \Omega_X^m \otimes (\alpha l H))$ such that $f^*\sigma$ is not identically zero. Let $W$ be the zero divisor of $\sigma$ in $X_1:=\bP(\overset{m} \bigwedge  T_X)$. Then
\begin{equation}\label{reftaut}
\frac{1}{l}N_{f_1}(W,r)+(m-k)m_f(Z,r) \leq \alpha T_{f,r}(H) +O(\ln T_{f,r}(H)+ \ln r) ||.
\end{equation}
\end{theorem}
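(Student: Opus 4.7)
My plan is to follow the strategy of Siu~\cite{Siu04}: combine the First Main Theorem with the standard tautological inequality (Theorem~\ref{taut}), and refine the proximity term $m_{f_1}(W,r)$ using Lemma~\ref{bup} and the logarithmic derivative lemma (Theorem~\ref{derlog}).

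Starting from the First Main Theorem applied to the section $\tilde\sigma\in H^0(X_1,L^l\otimes\pi_1^*(\alpha l H))$ on $X_1=\bP(\wedge^m T_X)$ corresponding to $\sigma$, we have
\[
l\,T_{f_1,r}(L)+\alpha l\, T_{f,r}(H) = N_{f_1}(W,r)+m_{f_1}(W,r)+O(1).
\]
Theorem~\ref{taut} bounds $T_{f_1,r}(L)\le O(\ln T_{f,r}(H)+\ln r)$ outside an exceptional set. The refined inequality will follow once we establish
\[
m_{f_1}(W,r)\ge l(m-k)\,m_f(Z,r)-O(\ln T_{f,r}(H)+\ln r),
\]
since substituting this into the FMT identity and rearranging gives the desired bound.

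To prove this refined proximity estimate, I would work locally near each point $p\in Z$ in adapted coordinates $z_1,\dots,z_n$ with $Z=\{z_{k+1}=\dots=z_n=0\}$, and expand $\sigma=\sum_I \sigma_I\,dz_I\otimes h^{\alpha l}$ in the local basis of $m$-forms (with $h$ a local frame of $H$). The evaluation $\tilde\sigma\circ f_1$ is then, up to positive factors from $h$, the $l$-th symmetric product $\sum_I \sigma_I(f)\cdot\det(\partial f_I/\partial t)$. For each multi-index $I$ of size $m$, since at most $k$ entries of $I$ can lie in $\{1,\dots,k\}$, at least $m-k$ of them lie in $\{k+1,\dots,n\}$; using the identity $\partial f_j/\partial t_u=f_j\cdot\partial(\log f_j)/\partial t_u$ for normal directions $j>k$, one extracts from $\det(\partial f_I/\partial t)$ a factor $\prod_{j\in I,\,j>k}f_j$, leaving only logarithmic derivatives of the $f_j$. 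Lemma~\ref{bup}, translated to this coordinate setting, ensures that the contributions from the various $I$ assemble into the pointwise bound
\[
|\tilde\sigma\circ f_1(t)|\le C\cdot d(f(t),Z)^{l(m-k)}\cdot\Phi(t),
\]
where $\Phi$ is a product of absolute values of logarithmic derivatives of the $f_j$. Taking $-\log$, integrating over $S_r$, and applying Theorem~\ref{derlog} to obtain $\int_{S_r}\log^+\Phi\,\sigma=O(\ln T_{f,r}(H)+\ln r)$ outside a set of finite Lebesgue measure then yields the refined proximity bound; the $\|$ notation in the statement records this exceptional set.

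The main obstacle is converting the intrinsic vanishing statement of Lemma~\ref{bup} (phrased as the pullback inclusion $\pi^*\wedge^m\Omega_X\subset\wedge^m\Omega_{\tilde X}(\log E)\otimes\mathcal O(-(m-k)E)$ on the blow-up of $Z$) into the concrete coordinate-based pointwise bound on $|\tilde\sigma\circ f_1|$. This requires a careful partition of the determinants $\det(\partial f_I/\partial t)$ according to the number of normal-direction entries in $I$, together with Hadamard's inequality (as in the proof of Theorem~\ref{taut}) to control the growth of the log-derivative factors $\Phi$ at points where $f$ is close to $Z$ but has large derivative.
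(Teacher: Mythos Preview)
Your proposal is correct and follows essentially the same route as the paper: the same four ingredients (First Main Theorem for $\sigma$, Lemma~\ref{bup} for the order-$(m-k)$ vanishing along $Z$, the logarithmic derivative lemma Theorem~\ref{derlog}, and the tautological inequality~(\ref{intaut})) combined in the same way to compare $m_f(Z,r)$ with the proximity to $W$. The one packaging difference is that the paper resolves your stated globalization obstacle by working on the blow-up $\pi:\tilde X\to X$ of $Z$: Lemma~\ref{bup} then gives a global factorization $\pi^*\sigma=s_E^{l(m-k)}\tilde\tau$ with $\tilde\tau$ a holomorphic section of $S^l\Omega^m_{\tilde X}(\log E)\otimes\pi^*(\alpha lH)\otimes(-l(m-k)E)$, from which your refined proximity bound $l(m-k)\,m_f(Z,r)\le -\int_{S_r}\ln\|\tau\circ f\|\,\sigma+\int_{S_r}\ln^+\|\tau\circ f\|\,\sigma+\int_{S_r}\ln^+\|\tilde\tau\circ\tilde f\|\,\sigma+O(1)$ follows at once, the two $\ln^+$ integrals being handled by Theorem~\ref{derlog} exactly as you propose.
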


\begin{proof}
Let $\pi: \tilde{X} \to X$ be the blow-up of $Z$ and $E=\pi^{-1}(Z)$. Let $\tilde{f}: \bC^m \to \tilde{X}$ be the lifting of $f$ and let $\tau=\pi^*\sigma$. By lemma \ref{bup}, $\tau$ is a holomorphic section of $S^l\Omega^m_{\tilde{X}}(\log E)\otimes \pi^*(\alpha l H)$ which vanishes to order at least $l(m-k)$ on $E$. Let $s_E$ be the canonical section of $E$. Let $\tilde{\tau}=\frac{\tau}{s_E^{l(m-k)}}$ which is a holomoprhic section of $S^l\Omega^m_{\tilde{X}}(\log E)\otimes \pi^*(\alpha l H)\otimes (-l(m-k) E)$ over $\tilde{X}$. We consider $h_E$ a smooth hermitian metric on $E$ and $h_H$ a smooth hermitian metric on $H$.
\begin{eqnarray*}
2l(m-k) m_{\tilde{f}}(E,r)&=&\int_{S_r}\ln^+ \frac{1}{||s_E^{l(m-k)} \circ \tilde{f} ||_{h_E^{l(m-k)}}^2} \sigma  +O(1)\\
&\leq& \int_{S_r} \ln^+ \frac{1}{||\tau \circ f ||_{h_H^{\alpha l}}^2}\sigma + \int_{S_r}\ln^+ ||\tilde{\tau} \circ \tilde{f} ||_{h_H^{\alpha l}h_E^{-l}}^2\sigma +O(1)\\
&=&-\int_{S_r}\ln ||\tau \circ f ||_{h_H^{\alpha l}}^2 \sigma +\int_{S_r}\ln^+||\tau \circ f ||_{h_H^{\alpha l}}^2 \sigma + \int_{S_r}\ln^+ ||\tilde{\tau} \circ \tilde{f} ||_{h_H^{\alpha l}h_E^{-l}}^2\sigma +O(1).
\end{eqnarray*}
Taking the logarithms of global meromorphic functions as local coordinates, the lemma on the logarithmic derivative \ref{derlog} implies
$$\int_{S_r}\ln^+||\tau \circ f ||^2 \sigma = O( \ln T_{f,r}(H)+ \ln r),$$
$$\int_{S_r}\ln^+ ||\tilde{\tau} \circ \tilde{f} ||^2\sigma=O( \ln T_{f,r}(H)+ \ln r).$$
Therefore, by the First Main Theorem, we obtain
$$2l(m-k) m_{\tilde{f}}(E,r) \leq -2N_{f_1}(W,r)+2\alpha l T_{f,r}(H)+2lT_{f_1,r}(\mathcal{O}_{X_1}(1))+O( \ln T_{f,r}(H)+ \ln r).$$
The inequality \ref{intaut} implies that $T_{f_1,r}(\mathcal{O}_{X_1}(1))=O( \ln T_{f,r}(H)+ \ln r)$ and this concludes the proof.
\end{proof}

\begin{corollary}
Let $X$ be a projective manifold of dimension $n$, $H$ an ample line bundle on $X$, $Z \subset X$ a submanifold of codimension $n-k \geq 2$. Let $f: \bC^m \to X$ a holomorphic map of maximal rank. Then
$$T_{f_1,r}(\mathcal{O}_{X_1}(1)) +(m-k)m_f(Z,r) \leq O( \ln T_{f,r}(H)+ \ln r).$$
\end{corollary}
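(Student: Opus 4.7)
My plan is to deduce the corollary from the refined tautological inequality proved just above, combined with the First Main Theorem and the Lemma on Logarithmic Derivatives (Theorem \ref{derlog}).

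First, I would choose a positive integer $l$ and a rational $\alpha>0$ with $\alpha l\in\bN$ large enough that there exists a nonzero section $\sigma\in H^0(X,S^l\Omega^m_X\otimes (\alpha l H))$, viewed as a section of $\mathcal{O}_{X_1}(l)\otimes\pi^*(\alpha l H)$ on $X_1$ with zero divisor $W$, such that $f_1(\bC^m)\not\subset W$. Such sections exist as soon as $\alpha$ is sufficiently positive, since the line bundle $\mathcal{O}_{X_1}(1)+\pi^*(\alpha H)$ is then big on $X_1$. Applying the preceding refined inequality yields
\[
\tfrac{1}{l}N_{f_1}(W,r)+(m-k)m_f(Z,r)\leq \alpha T_{f,r}(H)+O(\ln T_{f,r}(H)+\ln r)\;\|.
\]
On the other hand, the First Main Theorem applied to $f_1$ and $\sigma$ gives
\[
N_{f_1}(W,r)+m_{f_1}(W,r)=l\,T_{f_1,r}(\mathcal{O}_{X_1}(1))+\alpha l\,T_{f,r}(H)+O(1).
\]
Substituting this identity in the previous estimate and cancelling the common $\alpha T_{f,r}(H)$ terms, I obtain
\[
T_{f_1,r}(\mathcal{O}_{X_1}(1))+(m-k)m_f(Z,r)\leq \tfrac{1}{l}m_{f_1}(W,r)+O(\ln T_{f,r}(H)+\ln r)\;\|.
\]

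The remaining task, which I expect to be the main obstacle, is to bound $\tfrac{1}{l}m_{f_1}(W,r)$ by $O(\ln T_{f,r}(H)+\ln r)$. A priori $m_{f_1}(W,r)$ could be of the same order of magnitude as $T_{f_1,r}(\mathcal{O}(W))$, which would leave an undesirable linear term $\alpha T_{f,r}(H)$ on the right-hand side. My plan is to choose $\sigma$ so that, in local holomorphic coordinates on $X$, the composition $\sigma\circ f_1$ factors as a product of logarithmic derivatives of meromorphic functions on $\bC^m$ of the form $\partial_{t_i}(h\circ f)/(h\circ f)$ for suitable local holomorphic functions $h$. Theorem \ref{derlog} then bounds the proximity-to-zero contribution of each such factor by $O(\ln T_{f,r}(H)+\ln r)$, and summing the local estimates yields the required logarithmic bound on $m_{f_1}(W,r)$. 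This strategy is analogous in spirit to the Wronskian-type constructions used by Siu and others to incorporate defect terms of codimension-$\geq 2$ subvarieties into tautological inequalities.
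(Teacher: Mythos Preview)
Your reduction up to the displayed inequality
\[
T_{f_1,r}(\mathcal{O}_{X_1}(1))+(m-k)m_f(Z,r)\leq \tfrac{1}{l}m_{f_1}(W,r)+O(\ln T_{f,r}(H)+\ln r)\;\|
\]
is correct and is exactly the implicit computation behind the paper's proof. The gap is in your final step.

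The lemma on logarithmic derivatives (Theorem~\ref{derlog}) bounds $\int_{S_r}\ln^{+}\bigl|\partial_{t_i}g/g\bigr|\,\sigma$, that is, the average of the \emph{positive} logarithm of a logarithmic derivative. This is precisely what is used in the proof of the preceding theorem to control $\int_{S_r}\ln^{+}\|\tau\circ f\|^{2}\,\sigma$: it tells you that such quotients cannot be too \emph{large} on average. But $m_{f_1}(W,r)=\int_{S_r}\ln^{+}\!\bigl(1/\|\sigma\circ f_1\|\bigr)\,\sigma$ measures how \emph{small} $\sigma\circ f_1$ is, i.e.\ the proximity of $f_1$ to the zero locus $W$. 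Expressing $\sigma\circ f_1$ as a polynomial in logarithmic derivatives yields a bound on $\int\ln^{+}\|\sigma\circ f_1\|$, not on $\int\ln^{+}(1/\|\sigma\circ f_1\|)$; the sign is wrong, and Theorem~\ref{derlog} says nothing about how often a logarithmic derivative is close to zero. So the log-derivative lemma cannot handle $m_{f_1}(W,r)$ in the way you propose.

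The paper disposes of $m_{f_1}(W,r)$ by a completely different and much simpler device: it takes $\alpha$ large enough that $\mathcal{O}_{X_1}(1)\otimes\pi^{*}(\alpha H)$ is ample on $X_1$, then $l$ large enough that $\mathcal{O}_{X_1}(l)\otimes\pi^{*}(\alpha l H)$ is very ample, and chooses $\sigma$ in this very ample system so that the defect
\[
\liminf_{r}\ \frac{m_{f_1}(W,r)}{T_{f_1,r}\bigl(\mathcal{O}_{X_1}(l)\otimes(\alpha l H)\bigr)}=0.
\]
Such a $\sigma$ always exists, since in a very ample linear system the defects are summable and hence most sections have defect zero. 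Feeding this back into your displayed inequality immediately gives the conclusion. You should replace your step~5 by this defect argument.
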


\begin{proof}
Choosing $\alpha$ sufficiently large, $\mathcal{O}_{X_1}(1)\otimes (\alpha H)$ is ample over $X_1$. Then if we choose $l$ sufficently large, $\mathcal{O}_{X_1}(l)\otimes (\alpha l H)$ is very ample. Then there exists $\sigma \in H^0(X, S^l \Omega_X^m \otimes (\alpha l H))$ such that the defect under the mapping $f_1$ $$\liminf_r  \frac{m_{f_1}(W,r)}{T_{f_1,r}(\mathcal{O}_{X_1}(l)\otimes (\alpha l H))}=0,$$ where $W \subset X_1$ is the zero divisor of $\sigma$. Therefore, from inequality \ref{reftaut}, we deduce
$$T_{f_1,r}(\mathcal{O}_{X_1}(1)) +(m-k)m_f(Z,r) \leq O( \ln T_{f,r}(H)+ \ln r).$$
\end{proof}

\section{Holomorphic mappings and foliations}
\subsection{The smooth case}
A smooth foliation of dimension $p$ on $X$ is given by an integrable subbundle  $\mathcal{F} \subset T_X$ of rank $p$. So we have an exact sequence
$$0 \to  \mathcal{F} \to T_X \to N_\mathcal{F} \to 0,$$ 
where $N_\mathcal{F}$ is called the normal bundle of the foliation. The line bundle $K_\mathcal{F}:=\det(\mathcal{F}^*)$ is the canonical bundle of the foliation. Notice that we have an isomorphism $K_X=K_\mathcal{F} \otimes \det(N_\mathcal{F}^*).$

\begin{proof}[Proof of Theorem A]
Let   $f: \bC^{p} \to X$ be a holomorphic map of generic maximal rank which is a leaf of the foliation  $\mathcal{F}$, i.e. it is tangent to  $\mathcal{F}$.
The foliation $\mathcal{F}$ defines a section $F \subset X_1:=\bP(\overset{p} \bigwedge  T_X)$ over $X$ and the tautological line bundle verifies $L|_{F}=\pi^*K_\mathcal{F}^{-1}$. From Theorem \ref{taut} we have
$$0\leq [\Phi_1].L=[\Phi].K_\mathcal{F}^{-1}=[\Phi].K_X^{-1}-[\Phi].\det(N_\mathcal{F}).$$
Suppose that the image of $f$ is not contained in $\NAmp(K_X)$. Then there exists a decomposition of the canonical divisor
$K_X\sim_\Q A + E$ into the sum of a $\bQ$-ample divisor $A$ and   $\bQ$-effective divisor $E$ such that the image of $f$ is not contained in $\Supp(E)$.
By the smoothness assumption the normal bundle $N_\mathcal{F}|_{\mathcal{F}}$ is flat, we obtain using Lemma \ref{pos}
$$[\Phi].K_X^{-1}-[\Phi].\det(N_\mathcal{F})=-[\Phi].K_X= -[\Phi].A - [\Phi].E<0,$$
which is a contradiction.
\end{proof}
Notice that when the foliation is not smooth the control of its normal bundle is a delicate point. 
\subsection{The singular case}
In general, there is no reason that the foliations we are working with is non-singular. Therefore, it is important to generalize the preceding result to singular foliations.
We consider  singular holomorphic foliations $\mathcal{F}$ of codimension one on a projective manifold $X$ of dimension $n$. It is locally given by a differential equation $\omega$ where
$$\omega=\sum_{i=1}^{n} a_i(z)dz_i$$
is an integrable $1-$ form, i.e. $\omega \wedge d\omega = 0$, and the coefficients $a_i$ have no common factor. The \textit{singular locus} $Sing \mathcal{F}$ is locally given by the common zeros of the coefficients $a_i$.

The foliation can be defined by a collection of $1$-forms $\omega_j \in \Omega_X^1(U_j)$ such that $\omega_i=f_{ij} \omega_j$ on $U_i\cap U_j$, $f_{ij} \in \mathcal{O}_X^*(U_i\cap U_j)$.

As in the smooth case, there are two holomorphic line bundles associated to $\mathcal{F}$, the normal bundle $N_\mathcal{F}$ and the canonical bundle $K_\mathcal{F}$ of the foliation $\mathcal{F}$. $N_\mathcal{F}$ is defined by the cocycle $f_{ij}$ and $K_\mathcal{F}=K_X\otimes N_\mathcal{F}$.

In the case of surfaces, one uses the theorem of resolution of singularities of Seidenberg \cite{Sei} to work only with reduced singularities. In general, one may expect theorems of resolution of singularities to reduce the problem to {\it canonical singularities} as introduced by McQuillan \cite{McQ08} following the approach of the Mori program.

\begin{definition}
Let $(X,\mathcal{F})$ be a pair where $X$ is a projective variety and $\mathcal{F}$ a foliation.
Let $p: (\tilde{X}, \tilde{\mathcal{F}}) \to (X,\mathcal{F})$ be a birational morphism of pairs. We can write
$$K_{\tilde{\mathcal{F}}}=p^*K_{\mathcal{F}}+ \sum a(E,X,\mathcal{F})E.$$
$a(E,X,\mathcal{F})$ is independent of the morphism $p$ and depends only on the discrete valuation that corresponds to $E$. It is called the discrepancy of $(X,\mathcal{F})$ at $E$.
We define
\begin{eqnarray*}
discrep(X,\mathcal{F})&=&\inf \{a(E,X,\mathcal{F}); E \text{ corresponds to a discrete valuation}\\
\text{such that } Center_X(E)\neq \emptyset &and& \codim(Center_X(E)) \geq 2\}.
\end{eqnarray*}

We say that $(X,\mathcal{F})$ has canonical singularities if $discrep(X,\mathcal{F}) \geq 0.$
\end{definition}

Following McQuillan \cite{McQ08}, we may generalize this definition to  singularities of foliated pairs: consider  a triple $(X, \mathcal{F}, B)$ where $X$ is a projective variety, $\mathcal{F}$ is a foliation on it and $B=\sum_i\left(1-\frac{1}{m_i}\right) B_i$  is a boundary $\Bbb Q$--Cartier divisor, where $m_i \in \bN \cup \{\infty\}$.  Consider the following function on Cartier divisors of $X$:

$$\epsilon(H):= \left\{
\begin{array}{l}
0 \text{ if } H \text{ is invariant by the foliation } \\
1 \text{ if } H \text{ is not invariant by the foliation}.
\end{array}
\right. $$

For every $p: (\tilde{X}, \tilde{\mathcal{F}}) \to (X,\mathcal{F})$ blow up with (reduced) exceptional divisor $E=\sum_jE_j$,  write $p^\ast(B_i)=\sum_j\nu_i(E_j)E_j
+B_i'$, where $B_i'$ is the strict transform of $B_i$.

\begin{definition}\label{def:sing}
$(X, B, \mathcal{F})$ is said to be
\begin{enumerate}
\item $\text{terminal if } a(E_j,X,\mathcal{F}) - \sum_i \left(1-\frac{1}{m_i}\right) \epsilon_i \nu_i(E_j) >0,$
\item $\text{canonical if } a(E_j,X,\mathcal{F}) - \sum_{ij} \left(1-\frac{1}{m_i}\right) \epsilon_i \nu_i(E_j) \geq 0,$
\item $\text{log terminal if } a(E_j,X,\mathcal{F}) - \sum_i \left(1-\frac{1}{m_i}\right) \epsilon_i \nu_i(E_j) >-\epsilon(E_j),$
\item $\text{log canonical if } a(E_j,X,\mathcal{F}) - \sum_i \left(1-\frac{1}{m_i}\right) \epsilon_i\nu_i(E_j) \geq-\epsilon(E_j),$
\end{enumerate}
for every blow up $p: (\tilde{X}, \tilde{\mathcal{F}}) \to (X,\mathcal{F})$, where  $\epsilon_i:=\epsilon(B_i)$ and $\nu_i(E_j)$ are as above.
\end{definition}

\begin{remark}
Up to now, there are few theorems of resolution of singularities: it is known for codimension $1$ foliations in dimension $3$ \cite{Can} and recently for foliations by curves in dimension $3$ \cite{McQPan}.
\end{remark}

First, we consider the situation where we have a singular codimension $1$ foliation with \textit{logarithmic simple singularities} i.e. we can write $\omega$ in local coordinates
\begin{equation}\label{eq:logsing}
\omega=\left(\prod_{i=1}^r z_i \right) \sum_{i=1}^r \lambda_i\frac{dz_i}{z_i},
\end{equation}
where $\sum_{i=1}^r m_i\lambda_i \neq 0$, for any non-zero vector $(m_i) \in \bN^r.$

In particular, the only integral hypersurfaces are the components of $z_1\dots z_r=0.$
We can suppose, up to doing some blow ups, that the foliation have simple singularities adapted to a normal crossing divisor \cite{Can}. This means that we have an invariant simple normal crossing divisor $E$ such that every $P \in Sing \mathcal{F}$ belongs to at least $r-1$ irreducible components of  $E$.

We consider $X_1:=\bP(\overset{m} \bigwedge  T_X(-\log(E)))$ with the natural projection $\pi: X_1 \to X$. 

The foliation $\mathcal{F}$ defines a section $F \subset X_1$ over $X$ and

$$\mathcal{O}_{X_1}(-1)|_F=\pi^*K_\mathcal{F}^{-1}.$$
Fix an ample divisor $H$ on $X$. 
\begin{prop}\label{Edoesntcount}
With the notation above we have 
$$\lim_{r \to +\infty} \frac{N^1_{f}(E,r)}{T_{f,r}(H)}=0.$$
\end{prop}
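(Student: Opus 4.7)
The plan is to prove the stronger statement that $f^{-1}(E)\subset\bC^{n-1}$ contains no irreducible hypersurface component, which immediately forces $N^1_f(E,r)=0$ for every $r$ and makes the limit trivially zero. I will assume for contradiction that some irreducible codimension-one $D\subset\bC^{n-1}$ lies in $f^{-1}(E)$, and hence (by irreducibility of $D$) in $f^{-1}(E_k)$ for some irreducible component $E_k$ of $E$. The argument then splits into a smooth-locus step and a log-simple singularity step.

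First, I rule out the case $f(D)\not\subset\operatorname{Sing}\mathcal{F}$. Pick $q\in D$ with $f(q)\in E_k\setminus\operatorname{Sing}\mathcal{F}$ and work in a local chart around $f(q)$ in which the (regular) foliation is defined by $\omega=dz_1$ with $E_k=\{z_1=0\}$. The tangency condition $f^\ast\omega=d(z_1\circ f)=0$ forces $z_1\circ f$ to be locally constant, hence equal to $z_1(f(q))=0$ on the connected component $U$ of $f^{-1}(\text{chart})$ containing $q$. Since $U$ is nonempty open and $U\subset f^{-1}(E_k)$, the closed analytic subset $f^{-1}(E_k)\subset\bC^{n-1}$ must equal the irreducible manifold $\bC^{n-1}$, giving $f(\bC^{n-1})\subset E$, contrary to the implicit hypothesis $f(\bC^{n-1})\not\subset E$. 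So $f(D)\subset\operatorname{Sing}\mathcal{F}$.

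Second, at a generic smooth point $q\in D$ I use the log-simple structure at $f(q)$: in suitable coordinates $\omega=\bigl(\prod_{i=1}^r z_i\bigr)\cdot\eta$ with $\eta=\sum_{i=1}^r\lambda_i\,dz_i/z_i$ satisfying the non-resonance condition $\sum m_i\lambda_i\neq 0$ for every $(m_i)\in\bN^r\setminus\{0\}$. Since $f(\bC^{n-1})\not\subset E$ gives $\prod_i(z_i\circ f)\not\equiv 0$, the identity $f^\ast\omega=0$ forces $f^\ast\eta\equiv 0$ as a meromorphic $1$-form near $D$. Setting $m_i:=\operatorname{ord}_D(z_i\circ f)\in\bN$, the residue of $f^\ast\eta=\sum_i\lambda_i\,d(z_i\circ f)/(z_i\circ f)$ along $D$ equals $\sum_{i=1}^r\lambda_i m_i$ and must vanish. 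But $D\subset f^{-1}(E_k)$ together with the fact that $E_k$ coincides locally with some branch $\{z_{i_k}=0\}$ forces $m_{i_k}\geq 1$, so $(m_i)\in\bN^r\setminus\{0\}$; non-resonance then makes the residue nonzero, the desired contradiction.

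I expect the most delicate step to be the residue computation at a generic smooth point of $D$: one must simultaneously arrange that $D$ is smooth at $q$, that $f(q)$ lies in a single log-simple chart, that the functions $z_i\circ f$ have unambiguous orders of vanishing along $D$, and that the global component $E_k$ hosting $f(D)$ corresponds locally to one of the branches $\{z_{i_k}=0\}$ of the log-simple model. These are all generic conditions, but their simultaneous realization and the local-to-global compatibility require some attention. Incidentally, applying the logarithmic tautological inequality of Theorem~\ref{logtaut} to the pair $(X,E)$ produces an estimate $T_{f,r}(K_\mathcal{F})\leq N^1_f(E,r)+O(\ln T_{f,r}(\omega))$ in the opposite direction from what we want, confirming that a direct residue-based argument as above is the natural route.
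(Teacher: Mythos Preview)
Your approach has a genuine gap, and in fact the stronger statement you aim for --- that $f^{-1}(E)$ contains no hypersurface component, hence $N^1_f(E,r)\equiv 0$ --- is \emph{false} in general.

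The error is the inference ``$f(\bC^{n-1})\not\subset E$ gives $\prod_i(z_i\circ f)\not\equiv 0$'' in your second step. The locus $\{\prod_{i=1}^r z_i=0\}$ is the union of the local separatrices $\{z_i=0\}$, but only $r-1$ of these are required to be components of $E$; the remaining one, say $\{z_{j_0}=0\}$, may well be absent from $E$. Now recall the paper's remark that for a logarithmic simple singularity ``the only integral hypersurfaces are the components of $z_1\cdots z_r=0$''. Since $f$ is tangent to $\mathcal F$ with generic rank $n-1$, the germ of the image of $f$ at $f(q)$ is an integral hypersurface through the singular point, hence must coincide with one of the separatrices. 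As $f(\bC^{n-1})\not\subset E$, this separatrix is precisely the extra branch $\{z_{j_0}=0\}$, so $z_{j_0}\circ f\equiv 0$ near $q$. The term $\lambda_{j_0}\,d(z_{j_0}\circ f)/(z_{j_0}\circ f)$ is then indeterminate, your residue computation collapses, and no contradiction arises. In fact, writing $V$ for the (formal) leaf carrying $f(\bC^{n-1})$, one has $f^{-1}(E_k)=f^{-1}(V\cap E_k)$ with $V\cap E_k$ of codimension one in $V$; since $f:\bC^{n-1}\to V$ has generic maximal rank, $f^{-1}(E_k)$ is genuinely a hypersurface and $N^1_f(E_k,r)>0$.

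Your first step is correct and recovers the reduction $N^1_f(E_k,r)=N^1_f(C,r)$ with $C=V\cap E_k\subset\operatorname{Sing}\mathcal F$ a codimension-two locus, which is also where the paper begins. But from there one needs a quantitative argument: the paper constructs, via a dimension count on the formal completion of $X$ along $C$ (Lemma~\ref{formallemma1}), global sections $s_m\in H^0(X,H^{\otimes m})$ whose restriction to $V$ vanishes along $C$ to order $q_m$ with $m/q_m\to 0$, yielding $N^1_f(C,r)\le (m/q_m)\,T_{f,r}(H)+O(1)$ and hence the desired limit. This step is not cosmetic; it is exactly what fills the gap left by the impossibility of a purely local residue argument.
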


In order to prove  Proposition \ref{Edoesntcount} we will prove the following Lemma:

\begin{lemma}\label{formallemma1} 
Let $(X, H)$ be a smooth polarized projective variety and $C$ a smooth closed subvariety of codimension two. Let $X_C$ be the  formal completion of $X$ around $C$ and $\iota:V\hookrightarrow X_C$ be a smooth formal subvariety containing $C$ of codimension one. Then we can find a sequence of global sections $s_m\in H^0(X,H^{\otimes m})$ such that $\iota^\ast(s_m)\geq q_mC$ with
$$\lim_{m\to\infty}{\frac{m}{q_m}}=0.$$
\end{lemma}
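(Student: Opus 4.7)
The plan is to turn this into a cohomological dimension count on $X$ by first observing that, although $V$ is only a formal subvariety of $X_C$, each finite-order infinitesimal thickening of $C$ inside $V$ is in fact an honest closed subscheme of $X$; once this algebraization is in place, producing sections of $H^{\otimes m}$ vanishing on such a thickening becomes a routine Riemann--Roch comparison.

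\smallskip

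\emph{Step 1 (algebraicity of the thickenings).} Let $\mathcal{I}_{V/X_C},\mathcal{I}_{C/X_C}\subset\mathcal{O}_{X_C}$ be the formal ideal sheaves of $V$ and $C$. Since $V\supset C$, one has $\mathcal{I}_{V/X_C}\subset\mathcal{I}_{C/X_C}$ and $\mathcal{I}_{C/V}=\mathcal{I}_{C/X_C}/\mathcal{I}_{V/X_C}$ as an ideal in $\mathcal{O}_V$. For $q\ge 1$, let $V_q\subset V$ be the closed subscheme defined by $\mathcal{I}_{C/V}^q$; its ideal in $\mathcal{O}_{X_C}$ is $\mathcal{I}_{V/X_C}+\mathcal{I}_{C/X_C}^q$, which contains $\mathcal{I}_{C/X_C}^q$. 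Since formal completion along $C$ does not affect finite-order quotients (that is, $\mathcal{O}_{X_C}/\mathcal{I}_{C/X_C}^q=\mathcal{O}_X/\mathcal{I}_{C/X}^q$), $V_q$ embeds in the algebraic $q$-th infinitesimal neighborhood of $C$ in $X$, and is therefore an honest closed subscheme of $X$.

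\smallskip

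\emph{Step 2 (dimension count).} The filtration $\mathcal{O}_{V_q}\supset\mathcal{I}_{C/V}/\mathcal{I}_{C/V}^q\supset\mathcal{I}_{C/V}^2/\mathcal{I}_{C/V}^q\supset\cdots\supset 0$ has graded pieces $(N_{C/V}^\ast)^{\otimes k}$ for $k=0,\dots,q-1$, where $N_{C/V}$ is the normal line bundle of the codimension-one subvariety $C$ in $V$. Tensoring with $H^{\otimes m}$ and bounding global sections in each graded piece yields
$$h^0(V_q, H^{\otimes m}|_{V_q})\ \le\ \sum_{k=0}^{q-1} h^0\bigl(C,\,H^{\otimes m}|_C\otimes (N_{C/V}^\ast)^{\otimes k}\bigr).$$
Since $H|_C$ is ample and $\dim C=n-2$, each summand is $O((m+k)^{n-2})$ uniformly in $m,k$ by Riemann--Roch together with standard bounds for linear series on projective varieties. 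Setting $q_m:=\lfloor m^{1+a}\rfloor$ with $0<a<1/(n-1)$ (so that $q_m>m$), summation gives
$$h^0(V_{q_m},H^{\otimes m}|_{V_{q_m}})\ =\ O(q_m^{n-1})\ =\ O\bigl(m^{(1+a)(n-1)}\bigr)\ =\ o(m^n).$$

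\smallskip

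\emph{Step 3 (conclusion).} By asymptotic Riemann--Roch applied to the ample line bundle $H$, $h^0(X,H^{\otimes m})=\tfrac{(H^n)}{n!}\,m^n+O(m^{n-1})$, which strictly dominates the bound above. Hence for $m$ large the restriction map $H^0(X,H^{\otimes m})\to H^0(V_{q_m},H^{\otimes m}|_{V_{q_m}})$ has (very large) nontrivial kernel, and any nonzero $s_m$ in it satisfies $\iota^\ast(s_m)\in H^0\bigl(V,H^{\otimes m}|_V\otimes\mathcal{I}_{C/V}^{q_m}\bigr)$, i.e., $\iota^\ast(s_m)\ge q_mC$ on $V$; moreover $m/q_m=m^{-a}\to 0$. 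The only truly technical point is Step 1: it is exactly what lets us leave the formal world of $V$ and compute on $X$, and after that, the codimension-two hypothesis is used precisely in the numerology $(1+a)(n-1)<n$, which encodes the fact that $V_q$ can only grow in one normal direction inside $V$.
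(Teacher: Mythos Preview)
Your proof is correct and follows essentially the same route as the paper's: filter the $q$-th infinitesimal neighborhood of $C$ in $V$ by the powers of the conormal line bundle, bound each graded piece by $O((m+k)^{n-2})$ via Riemann--Roch on $C$, and compare the resulting $O(m^{(1+a)(n-1)})$ against $h^0(X,H^{\otimes m})\sim (H^n)m^n/n!$ to force a nontrivial kernel when $0<a<1/(n-1)$. Your Step~1, which justifies why the finite-order thickenings $V_q$ are honest closed subschemes of $X$ (hence why the restriction map from $H^0(X,H^{\otimes m})$ makes sense), is a point the paper's proof passes over silently; and your exponent $(1+a)(n-1)$ is the correct one---the paper's displayed bound $A_2m^{(n-2)(1+\epsilon)}$ is a slip, since the integral $\int_1^{m^{1+\epsilon}}(m+t-1)^{n-2}\,dt$ actually has order $m^{(n-1)(1+\epsilon)}$.
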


\begin{proof} Observe that $C$ is a divisor in $V$. Denote by $N_C$ the normal line bundle of $C$ inside $V$and by $C_i$ the $i$--th formal neigborhood of $C$ inside $V$. For every positive integers $i\geq 1$ and $m\geq 1$, we have an exact sequence
$$0\to H^{\otimes m}\vert_C\otimes N_C^{\otimes -(i-1)}\longrightarrow H^{\otimes m}\vert_{C_i}\longrightarrow H^{\otimes m}\vert_{C_{i-1}}\to 0.$$
Denote by $E_m^i$ the kernel of the composite map 
$$H^0(X,H^{\otimes m})\longrightarrow H^0(V,\iota^\ast(H^{\otimes m}))\longrightarrow H^0(C_i,H^{\otimes m}).$$ 
Fix $\epsilon>0$ sufficiently small. We will prove that, for $m\gg 0$ we have that $E_m^{m^{1+\epsilon}}\neq \{0\}$, and this will be enough to conclude. 

The snake lemma applied to the exact sequence above gives rise to an inclusion
$$\gamma_m^i :E_m^{i-1}/{E_m^{i}}\hookrightarrow H^0(C, H\vert_C^{\otimes m}\otimes N_C^{-(i-1)}).$$
Consequently 
$$\dim(E_m^{i})\geq \dim(E_m^{i-1})-h^0(C, H\vert_C^{\otimes m}\otimes N_C^{-(i-1)}).$$
By Riemann--Roch Theorem (or Hilbert--Samuel Theorem) we can find constants $A$ ans $A_1$ independent of $m$ and $i$ such that $h^0(X, H^{\otimes m})\geq Am^n$ and $h^0(C, H^{\otimes m}\otimes N_C^{-i})\leq A_1(m+i)^{n-2}$. Thus we obtain
$$\dim(E_m^{m^{1+\epsilon}})\geq Am^n-\sum_{i=1}^{m^{1+\epsilon}}A_1(m+(i-1))^{n-2}.$$
Since, for a suitable $A_2$ independent on $m$ we have that, for $m\gg 0$, 
$$\sum_{i=1}^{m^{1+\epsilon}}A_1(m+(i-1))^{n-2}\leq\int_1^{m^{1+\epsilon}}(m+(t-1))^{n-2}dt\leq A_2m^{(n-2)(1+\epsilon)},$$ the conclusion follows.

\end{proof}

Let us show how lemma \ref{formallemma1} implies lemma Proposition \ref{Edoesntcount}.
\begin{proof}[Proof of Proposition \ref{Edoesntcount}]
Let $E_1$ be an irreducible component of $E$. The two subvarieties $f(\bC^{n-1})$ and $E_1$ are both invariant for the foliation. Let $V$ be the leaf containing $f(\bC^{n-1})$. We  may suppose that $E_1$ and $V$ intersect  properly on an irreducible $C$, component of $Sing\mathcal{F}$ which is  a smooth closed subvariety of codimension two of $X$. We apply Lemma \ref{formallemma1} and we obtain
$$N^1_{f}(E_1,r)=N^1_{f}(C,r) \leq \frac{1}{q_m}N_{f}(\{s_m=0\},r)\leq \frac{m}{q_m}T_{f,r}(H)+c_m(H)$$
where $c_m(H)$ is a constant depending on $f$, $H$ and $s_m$ but independent on $r$. Lemma \ref{Edoesntcount} follows once one divide the inequality above by $T_{f,r}(H)$ and let $m$ and $r$ tend to infinity. 
\end{proof}

Now, we use Theorem \ref{logtaut} to obtain as an immediate consequence the following.

\begin{theorem}\label{canineq}
Let $\mathcal{F}$ be a holomorphic foliation of codimension one on a projective manifold $X$, $\dim X=n$. Suppose $Sing \mathcal{F}$ consists only of logarithmic simple singularities. Consider a holomorphic mapping $f: \bC^{n-1} \to X$ of generic maximal rank tangent to $\mathcal{F}$ which is Zariski dense. Then
$$[\Phi].K_\mathcal{F}^{-1} \geq 0.$$
\end{theorem}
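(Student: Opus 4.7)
The strategy is to transport the logarithmic tautological inequality (Theorem \ref{logtaut}) to the setting of the foliation, and then to absorb the logarithmic error term thanks to Proposition \ref{Edoesntcount}.

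First, I would set up the geometry. After the preparatory blow-ups recalled above (so that $\mathcal{F}$ has simple logarithmic singularities adapted to a reduced SNC invariant divisor $E=\sum E_i$), the defining $1$--forms of $\mathcal{F}$ have the shape $\omega=\sum_i\lambda_i(\prod_{j\neq i}z_j)dz_i$, hence the dual foliation sits naturally as a subsheaf $\mathcal{F}\subset T_X(-\log E)$ of rank $n-1$. Let $X_1:=\bP(\bigwedge^{n-1}T_X(-\log E))$ with tautological line bundle $L=\mathcal{O}_{X_1}(1)$, and let $F\subset X_1$ denote the section determined by $\mathcal{F}$; the identification $\mathcal{O}_{X_1}(-1)\vert_{F}=\pi^\ast K_\mathcal{F}^{-1}$ recalled just above the statement then gives $L\vert_{F}=\pi^\ast K_\mathcal{F}$.

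Next, since $f$ is tangent to $\mathcal{F}$ and has generic maximal rank, the wedge $\frac{\partial f}{\partial t_1}\wedge\dots\wedge\frac{\partial f}{\partial t_{n-1}}$ generically spans $\mathcal{F}$, and therefore lives in $\bigwedge^{n-1}T_X(-\log E)$; this shows that the natural logarithmic lift $f_1:\bC^{n-1}\to X_1$ factors through $F$. Consequently
$$T_{f_1,r}(L)=T_{f,r}(K_\mathcal{F}).$$
Theorem \ref{logtaut} applied to $f$ and $E$ then yields
$$T_{f,r}(K_\mathcal{F})\;\le\;N^1_f(E,r)+O(\log T_{f,r}(\omega))\quad \|.$$

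Now I would divide this inequality by $T_{f,r}(\omega)$ and pass to the limit along the subsequence $\{r_n\}$ used to define $\Phi$. Fix any ample $H$ with $\omega\in c_1(H)$ (or simply comparable): the term $O(\log T_{f,r}(\omega))/T_{f,r}(\omega)$ tends to $0$ because $f$ has maximal rank, hence $T_{f,r}(\omega)$ has at least logarithmic growth. The Zariski density hypothesis ensures $f(\bC^{n-1})\not\subset E_i$ for any component $E_i$, so Proposition \ref{Edoesntcount} applies componentwise and gives
$$\lim_{r\to+\infty}\frac{N^1_f(E,r)}{T_{f,r}(\omega)}=0.$$
Combining these, $\limsup_n T_{f,r_n}(K_\mathcal{F})/T_{f,r_n}(\omega)\le 0$, i.e. $[\Phi].K_\mathcal{F}\le 0$, which is the claim.

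The main obstacle in carrying this out is the first step: checking cleanly that under the logarithmic simple assumption the lift $f_1$ really does factor through the logarithmic Grassmannian section $F$, and that on $F$ one has $L=\pi^\ast K_\mathcal{F}$. Everything else is the mechanical combination of Theorem \ref{logtaut} with Proposition \ref{Edoesntcount}, together with the standard growth estimate on $T_{f,r}(\omega)$.
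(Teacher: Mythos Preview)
Your proposal is correct and follows exactly the route the paper takes: the paper states the theorem as ``an immediate consequence'' of Theorem~\ref{logtaut} together with Proposition~\ref{Edoesntcount}, relying on the already-established identifications $\mathcal{O}_{X_1}(-1)|_F=\pi^*K_{\mathcal{F}}^{-1}$ and on the fact that the lift $f_1$ lands in the section $F$. You have simply unpacked what ``immediate'' means, including the division by $T_{f,r}(\omega)$ and the passage to the limit along the chosen subsequence; the obstacle you single out (that $f_1$ factors through $F$ in the logarithmic Grassmannian) is precisely the piece the paper leaves implicit in its setup preceding the theorem.
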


This result implies Theorem B of the introduction.
\begin{proof}[Proof of Theorem B]
Let   $f: \bC^{n-1} \to X$ be a holomorphic map of generic maximal rank which is a leaf of the foliation  $\mathcal{F}$, i.e. it is tangent to  $\mathcal{F}$.
Suppose that the image of $f$ is not contained in $\NAmp(K_{\mathcal{F}})$. Then there exists a decomposition of the canonical divisor of the foliation
$K_{\mathcal{F}}\sim_\Q A + E$ into the sum of a $\bQ$-ample divisor $A$ and   $\bQ$-effective divisor $E$ such that the image of $f$ is not contained in $\Supp(E)$.
Using Lemma \ref{pos} we get
$$[\Phi].K_{\mathcal{F}}^{-1} =-[\Phi].A - [\Phi].E<0,$$
which is in contradiction with Theorem \ref{canineq}.
\end{proof}

\section{Applications}

We deal with foliations with canonical singularities and local holomorphic first integrals i.e. the form $\omega$ can be written 
\begin{equation}\label{eq:1int}
\omega=g df
\end{equation}
 with $g$ and $f$ holomorphic and $g$ nonvanishing.

\begin{proof}[Proof of Theorem C]
Suppose we have such a map $f$.
Since we have local first integrals, by Hironaka's theorem \cite{Hir}, we have a resolution $\pi:(\tilde{X}, \tilde{\mathcal{F}}) \to (X,\mathcal{F})$ where $(\tilde{X}, \tilde{\mathcal{F}})$ has only logarithmic simple singularities. Consider the (meromorphic) lifting $\tilde{f}: \bC^{n-1} \to \tilde{X}.$ Then Theorem \ref{canineq} gives
$$[\tilde{\Phi}].K_{\tilde{\mathcal{F}}}^{-1} \geq 0.$$ 
Since we have canonical singularities we have
$$K_{\tilde{\mathcal{F}}} \geq \pi^*K_\mathcal{F}$$ and therefore
\begin{equation}\label{eq:1}
[\Phi].K_\mathcal{F}^{-1} \geq 0.
\end{equation}

The existence of local integrals $\omega=g df$ implies that $d\omega=\beta \wedge \omega$, where $\beta =\frac{dg}{g}$ is a holomorphic $1$-form.

Let us take an open covering $\{U_j\}_{j\in I}$ of $X$ and holomorphic one-forms $\omega_j \in \Omega_X^1(U_j)$ generating $\mathcal{F}$ such that 
$$d\omega_j=\beta_j \wedge \omega_j.$$ 
On each $U_i\cap U_j$ we also have 
$$\omega_i= g_{ij}\omega_j$$
where the cocycle $\{g_{ij}\}$ defines $N_\mathcal{F}$. So, we have
$$\beta_i \wedge \omega_i=d\omega_i=dg_{ij}\wedge \omega_j +g_{ij}d\omega_j=\left(\frac{dg_{ij}}{g_{ij}}+\beta_j\right) \wedge \omega_i,$$
and therefore
$$\left(\frac{dg_{ij}}{g_{ij}}+\beta_j - \beta_i\right) \wedge \omega_i=0.$$
We can find smooth $(1,0)$-forms $\gamma_j \in A^{1,0}(U_j)$ such that
$$\gamma_j \wedge \omega_j = 0,$$
on $U_j,$ and 
$$\frac{dg_{ij}}{g_{ij}}= \beta_i - \beta_j+\gamma_i-\gamma_j,$$
on $U_i\cap U_j$.
The $2$-form defined by
$$\frac{1}{2i\pi}d(\beta_j + \gamma_j)$$
on $U_j$ represents the first Chern class of $N_\mathcal{F}$.

The relation $d\omega_j=\beta_j \wedge \omega_j$ implies that $d\beta_j|_{\mathcal{F}}\equiv 0.$
Moreover $d\gamma_j|_{\mathcal{F}}\equiv 0.$
Therefore we obtain
\begin{equation}\label{eq:2}
[\Phi].N_\mathcal{F}=0.
\end{equation}

Suppose that the image of $f$ is not contained in $\NAmp(K_X)$. Then there exists a decomposition of the canonical divisor
$K_X\sim_\Q A + E$ into the sum of a $\bQ$-ample divisor $A$ and   $\bQ$-effective divisor $E$ such that the image of $f$ is not contained in $\Supp(E)$.
Using Lemma \ref{pos}, (\ref{eq:1}) and (\ref{eq:1})
 we conclude thanks to the following  contradiction
$$0<[\Phi].A + [\Phi].E=[\Phi].K_X=[\Phi].K_\mathcal{F}+[\Phi].N_\mathcal{F}^*=[\Phi].K_\mathcal{F}\leq 0.$$
\end{proof}

When $Sing \mathcal{F}$ has codimension $\geq 3$ we can use the following theorem due to Malgrange \cite{Mal}.
\begin{theorem}\label{thm:Mal}
Let $\mathcal{F}$ be a germ of foliation at $(\bC^n,0)$.
If $\codim Sing \mathcal{F} \geq 3$ then $\mathcal{F}$ has a holomorphic first integral.
\end{theorem}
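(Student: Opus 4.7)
The plan is to prove Malgrange's theorem in two stages: first produce a formal first integral, then show it converges.

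\emph{Stage 1 (formal integrability).} I would first construct $\hat f \in \widehat{\mathcal{O}}_{\bC^n,0}$ with $d\hat f\wedge\omega=0$ and $\hat f$ non-constant. Away from $Sing\,\mathcal{F}$ the foliation is smooth, so Frobenius (using $\omega\wedge d\omega=0$) produces local holomorphic first integrals, hence a well-defined sheaf of local integrals on the punctured germ $U := (\bC^n,0) \setminus Sing\,\mathcal{F}$. The obstruction to gluing these into a formal integral defined across $Sing\,\mathcal{F}$ is cohomological; the hypothesis $\codim Sing\,\mathcal{F}\geq 3$ translates into depth $\geq 3$ of $\mathcal{O}_{\bC^n,0}$ along $Sing\,\mathcal{F}$, and hence into vanishing of the relevant $H^1$ on infinitesimal neighborhoods. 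Concretely I would build $\hat f$ order by order on $\mathcal{O}/\mathfrak{m}^{k+1}$, checking that at each step the obstruction class lives in a cohomology group killed by the depth estimate. This is essentially a formal-category analogue of Hartogs extension across a codimension $\geq 3$ singular set.

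\emph{Stage 2 (convergence).} Once $\hat f$ is in hand, the task is to promote it to a convergent first integral, which is the analytic content of Theorem~0.2 of \cite{Mal}. A first attempt is Artin's approximation theorem applied to the analytic system $df\wedge\omega=0$: formal solvability yields convergent approximate solutions, and by taking the approximation order higher than the vanishing order of $\hat f$ one would obtain an honest convergent non-constant $f$ with $df\wedge\omega=0$. Malgrange's own approach is more refined, combining Gevrey-type estimates on $\hat f$, coherence of direct images along a resolution of $\mathcal{F}$, and Hartogs extension in the resolved model; but conceptually both routes aim at the same output.

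\emph{Main obstacle.} The convergence step is the real difficulty. Formal integrability is a reasonably clean consequence of the codimension hypothesis via a depth/cohomology argument, and one can plausibly write it out in a few pages. Going from formal to convergent is genuinely deep: naive term-by-term majorant estimates fail (the series defining a first integral of a generic integrable $\omega$ need not be of bounded type), and one must rely either on Artin-style approximation (itself a highly non-trivial result) or the specific analytic machinery in \cite{Mal}. This is precisely why the authors of the present paper invoke the theorem as a black box, using it only to reduce Corollary~D to Theorem~C.
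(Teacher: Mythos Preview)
The paper does not prove this statement: Theorem~\ref{thm:Mal} is quoted from Malgrange~\cite{Mal} and used only as a black box to deduce Corollary~D from Theorem~C, exactly as you yourself observe in your final paragraph. There is therefore no proof in the paper to compare your proposal against.

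As to the sketch itself, the two-stage architecture (formal first integral via a cohomological/depth argument exploiting $\codim Sing\,\mathcal{F}\geq 3$, then convergence) is indeed the shape of Malgrange's argument in~\cite{Mal}. One caution on your proposed shortcut in Stage~2: the system $df\wedge\omega=0$ involves the partial derivatives of the unknown $f$, and Artin approximation for systems with derivatives does not hold in general (there are well-known counterexamples). Making this route work would require reformulating the problem as an honest analytic system in the sense of Artin---for instance by treating the integrating factor $g$ as the primary unknown---or invoking a strengthened approximation theorem; this is not insurmountable but is more than a one-line application. Your Stage~1 outline is also thin: the order-by-order construction of $\hat f$ is not literally a Hartogs extension and needs a genuine identification of the obstruction spaces, not just the depth heuristic you give. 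These are matters of missing detail rather than a wrong overall approach.
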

We can now give the (immediate) proof of the corollaries stated in the introduction. 
\begin{proof}[Proof of Corollary D]
It follows from Theorems C and \ref{thm:Mal}.
\end{proof}

\begin{proof}[Proof of Corollary E] Let $X_d$ (respectively ${\mathcal{F}}_d$)
be as in the statement. 
We have $K_{X_d}=\mathcal{O}_{X_d}(d-n-2)$ (respectively $K_{\mathcal{F}_d}=\mathcal{O}_{\bP^{n}}(d-n+1)$). Therefore
item (1) follows from Theorem C and (\ref{eq:empty}) and
item (2) follows from Theorem B and (\ref{eq:empty}).
\end{proof}

\section{Examples}

When a foliation has a holomorphic first integral, it is canonical if and only if the level sets are log--canonical:

\begin{proposition}\label{lc}
Let $(X,{\mathcal{F}})$ be a codimension one foliation. Suppose that, locally on $X$,  the conormal bundle of ${\mathcal{F}}$ is generated by $\omega=d(f)$; suppose that the divisor $D:=\{ f=0\}$ is log--canonical. Then the foliation ${\mathcal{F}}$ is canonical.
\end{proposition}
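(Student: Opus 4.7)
My plan is to reduce the canonicity of $\mathcal{F}$ to the log canonicity of $(X,D)$ via an explicit identification of discrepancies. The key computation I will carry out is: for any proper birational $\pi: \tilde{X} \to X$ and any prime exceptional divisor $E \subset \tilde{X}$ with center $Z = \pi(E)$ of codimension at least $2$, one has
$$
a(E,X,\mathcal{F}) = a(E,X) + 1 - \ord_E(\pi^*f),
$$
which is precisely the log discrepancy of the pair $(X,D)$ at the valuation $E$. Once this identity is established, the log canonicity hypothesis on $(X,D)$ directly gives $a(E,X,\mathcal{F}) \geq 0$, hence $\mathcal{F}$ is canonical.

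The computation is local around a generic point of $Z$. Since $df$ is invariant under adding a constant to $f$, I can always arrange (by replacing $f$ by $f-c$) that $Z \subset \{f=0\}$ whenever $Z$ is contained in some leaf of $\mathcal{F}$; hence $m := \ord_E(\pi^*f) \geq 1$ in the relevant case. Letting $s$ be a local equation of $E$ on $\tilde{X}$ and writing $\pi^*f = s^m u$ with $u$ not divisible by $s$, a direct Leibniz computation gives
$$
\pi^*\omega = d(\pi^*f) = s^{m-1}\bigl(m u\, ds + s\, du\bigr).
$$
I will then argue that $s^{m-1}$ is the maximal divisorial factor of $\pi^*\omega$ along $E$: at a generic point of $E$ the function $u$ is a unit (since $E$ is not contained in the strict transform of $\{f=0\}$), so the $ds$-coefficient $m u$ of the residual form $\tilde{\omega} := m u\, ds + s\, du$ is a unit and $\tilde{\omega}$ does not vanish identically on $E$. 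The form $\tilde{\omega}$ therefore generates $N_{\tilde{\mathcal{F}}}^*$ near $E$, and extracting the common factor yields the divisorial relation $N_{\tilde{\mathcal{F}}} = \pi^*N_{\mathcal{F}} - (m-1)E$ along $E$. Combining this with $K_{\mathcal{F}} = K_X + N_{\mathcal{F}}$ and $K_{\tilde{X}} = \pi^*K_X + a(E,X)E$ (other exceptional components suppressed) immediately yields the key identity $a(E,X,\mathcal{F}) = a(E,X) + 1 - m$. For exceptional $E$ whose center $Z$ is not contained in any leaf (i.e.\ $f|_Z$ non-constant after any shift), the analogous analysis gives $\pi^*\omega = du$ with no divisorial factor along $E$, whence $a(E,X,\mathcal{F}) = a(E,X) \geq 0$ automatically from the smoothness of $X$.

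The main technical obstacle I anticipate is the precise identification of the maximal common factor of $\pi^*\omega$ along $E$, i.e.\ verifying that $mu\,ds + s\,du$ contains no further power of $s$ as a common factor of its coefficients. This reduces to checking that $u$ is a unit at a generic point of $E$, which uses that $E$ is not contained in the strict transform $\tilde{D}$; the argument becomes transparent once local coordinates adapted to $E$ are chosen. A related subtlety is that the hypothesis ``$\{f=0\}$ is log canonical'' must be read locally on $X$: for a center $Z$ lying in a leaf $\{f=c\}$ one exploits the log canonicity of the corresponding level set by shifting the first integral, so the hypothesis is implicitly understood to hold for every leaf of $\mathcal{F}$. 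Modulo these local points, the chain of equalities between discrepancies is algebraically routine.
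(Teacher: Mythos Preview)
Your argument is correct and follows essentially the same route as the paper's: both compute the vanishing order of $\pi^*(df)$ along an exceptional divisor via the Leibniz identity $d(s^m u)=s^{m-1}(mu\,ds+s\,du)$, deduce the normal-bundle relation $\pi^*N_{\mathcal{F}}=N_{\tilde{\mathcal{F}}}+(m-1)E$, and then combine this with $K_{\mathcal{F}}=K_X+N_{\mathcal{F}}$ to identify the foliation discrepancy $a(E,X,\mathcal{F})$ with the log discrepancy $a(E,X)+1-m$ of the pair $(X,D)$. Your presentation is in fact slightly more careful than the paper's: you work with an arbitrary divisorial valuation rather than a fixed log resolution, you treat separately the (easy) case where the center lies in no single leaf, and you make explicit the point---left implicit in the paper, though visible in its lead-in sentence ``the level sets are log--canonical''---that the hypothesis must apply to every level set of $f$, not only $\{f=0\}$.
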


\begin{proof} Let $\pi:(\tilde{X}, \tilde{D})\to(X,D)$ be a Hironaka resolution of the pair $(X,D)$ and let $E_1,\dots, E_r$ be the divisors contracted by $\pi$. By construction $\tilde{D}$ is the strict transform of $D$. Denote by $K_X$, resp. $K_{\tilde {X}}$, resp. $K_D$, resp. $K_{\tilde {D}}$ the canonical sheaf of $X$, resp. $\tilde{X}$, resp. $D$, resp $\tilde{D}$. By construction, there are positive constants $b_i$ and $r_i$ such that $K_{\tilde{X}}=\pi^\ast(K_X)+\sum_ib_iE_i$ and $\pi^\ast(D)=\tilde D+\sum_i r_iE_i$.

By adjunction formula we get $K_{\tilde{D}}=\pi^\ast(K_D)+\sum_i(b_i-r_i)E_i\vert_{\tilde{D}}$. Since $D$ is log--canonical, we have that $b_i-r_i\geq -1$.

Let $k(X)$ be the function field of $X$ and $R\subset k(X)$ be a discrete valuation ring with fraction field $k(X)$. Let $p$ be a uniformizer of $R$. Suppose that $D$ is locally given by $p^N u$ with $u\in R^\ast$. Thus the restriction of $\omega$ to $\rm{Spec}(R)$ is $(N-1)p^{N-1}ud(p)+p^Nd(u)$. This implies the following: Denote by $N_{\tilde{\mathcal{F}}}$ the normal line bundle of the foliation induced by $\mathcal{F}$ on $\tilde{X}$; then
$$\pi^\ast(N_{\mathcal{F}})=N_{\tilde{\mathcal{F}}}+\sum_i(r_i-1)E_i.$$
Consequently, since $K_X=K_{\mathcal{F}}-N_{\mathcal{F}}$ and $K_{\tilde{X}}=K_{\tilde{\mathcal{F}}}-N_{\tilde{\mathcal{F}}}$, a straightforward calculation gives
$$K_{\mathcal{F}}-\pi^\ast(K_{\mathcal{F}})=\sum_i(b_i-r_i)E_i+\sum_iE_i.$$
The conclusion follows.

\end{proof}

Now, let us consider a ramified cover $\pi: Y \to X$ and the induced foliation on $Y$, $\mathcal{G}:=\pi^*\mathcal{F}$. We can write
$$K_Y:=\pi^*(K_X+\Delta),$$
where $\Delta=\sum_i\left(1-\frac{1}{m_i}\right) Z_i$.

Then we have
\begin{lemma}\label{ramcan}
If $(X,\Delta, \mathcal{F})$ has canonical singularities then $(Y,\mathcal{G})$ has canonical singularities.
\end{lemma}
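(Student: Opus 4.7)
The goal is to show $a(E', Y, \mathcal{G}) \geq 0$ for every exceptional prime divisor $E'$ on every birational morphism $q : \tilde{Y} \to Y$. My plan is to express this discrepancy in terms of data on $X$ via a compatible birational model and then invoke the canonicity of $(X, \Delta, \mathcal{F})$.

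The key starting identity is a Riemann--Hurwitz formula for the foliated canonical bundle: a local computation (in coordinates where $Z_i = \{x_n=0\}$ and the cover is $y^{m_i} = x_n$) shows that the saturation of $\pi^*N_{\mathcal{F}}^*$ inside $\Omega^1_Y$ picks up an extra factor $(m_i - 1)\pi^{-1}(Z_i)$ exactly when $Z_i$ is $\mathcal{F}$-invariant (that is, when $\epsilon_i=0$, since in that case $\pi^*\omega$ vanishes to order $m_i-1$ along $\pi^{-1}(Z_i)$). Combined with $K_Y = \pi^*(K_X + \Delta)$ this yields the clean identity
$$K_{\mathcal{G}} = \pi^*\bigl(K_{\mathcal{F}} + B_{\mathrm{fol}}\bigr), \qquad B_{\mathrm{fol}} := \sum_i \epsilon_i\bigl(1 - \tfrac{1}{m_i}\bigr) Z_i.$$

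Given $q : \tilde{Y} \to Y$ and $E' \subset \tilde{Y}$, I would choose a birational model $p: \tilde{X} \to X$ realizing the valuation $\pi_*\mathrm{ord}_{E'}$ as an exceptional prime $F$, and (possibly after further blowing up and normalizing) arrange a commutative diagram with finite map $\tilde{\pi} : \tilde{Y} \to \tilde{X}$ satisfying $\tilde{\pi}(E') = F$, $\pi \circ q = p \circ \tilde{\pi}$, and ramification index $e$ along $E'$. Applying the identity above to $\tilde{\pi}$ yields $K_{\tilde{\mathcal{G}}} = \tilde{\pi}^*(K_{\tilde{\mathcal{F}}} + \tilde{B}_{\mathrm{fol}})$, where $\tilde{B}_{\mathrm{fol}}$ is built from the strict transforms $\tilde{Z}_i$ (with the same coefficients $\epsilon_i(1-1/m_i)$) plus contributions $\epsilon(E_j)(1-1/\tilde{m}_j)E_j$ whenever a $p$-exceptional divisor $E_j$ lies in the branch locus of $\tilde{\pi}$ with some ramification index $\tilde{m}_j \geq 1$. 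Subtracting the pulled-back identity on $X$ and using the expansions
$$K_{\tilde{\mathcal{F}}} - p^*K_{\mathcal{F}} = \sum_j a(E_j, X, \mathcal{F})\,E_j, \qquad p^*B_{\mathrm{fol}} - \tilde{B}_{\mathrm{fol}}^{\mathrm{str}} = \sum_j\Bigl(\sum_i \epsilon_i(1-\tfrac{1}{m_i})\nu_i(E_j)\Bigr)E_j$$
(with $\tilde{B}_{\mathrm{fol}}^{\mathrm{str}}$ the strict-transform part) produces
$$K_{\tilde{\mathcal{G}}} - q^*K_{\mathcal{G}} = \tilde{\pi}^*\sum_j\Bigl[a(E_j, X, \mathcal{F}) - \sum_i\bigl(1-\tfrac{1}{m_i}\bigr)\epsilon_i\,\nu_i(E_j) + \epsilon(E_j)\bigl(1-\tfrac{1}{\tilde{m}_j}\bigr)\Bigr]E_j.$$
Reading off the coefficient of $E'$, I obtain $a(E', Y, \mathcal{G}) = e\bigl(\bar{a}_F + \epsilon(F)(1 - 1/\tilde{m}_F)\bigr)$, where $\bar{a}_F := a(F, X, \mathcal{F}) - \sum_i(1-1/m_i)\epsilon_i\,\nu_i(F)$ is exactly the foliated log-discrepancy of $(X,\Delta,\mathcal{F})$ at $F$. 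The first summand is non-negative by the assumed canonicity of $(X, \Delta, \mathcal{F})$, and the second is non-negative since $\tilde{m}_F \geq 1$, yielding the desired inequality.

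The main obstacle is the clean derivation of the Riemann--Hurwitz identity $K_{\mathcal{G}} = \pi^*(K_{\mathcal{F}} + B_{\mathrm{fol}})$ in the presence of invariant branch components, and the fact that the base-changed cover $\tilde{\pi}: \tilde{Y} \to \tilde{X}$ may acquire unexpected ramification along the $p$-exceptional divisors and worse singularities under normalization; controlling this requires a careful choice of $p$ (for instance as a log-resolution of the triple $(X, \Delta, \mathrm{Sing}\,\mathcal{F})$) together with standard but tedious bookkeeping of log pullback under finite abelian covers.
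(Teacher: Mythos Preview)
Your approach is essentially the same as the paper's: both derive the Riemann--Hurwitz identity $K_{\mathcal{G}}=\pi^*\bigl(K_{\mathcal{F}}+\sum_i\epsilon_i(1-1/m_i)Z_i\bigr)$ and then read off the discrepancy formula $a(E_Y)=r\bigl(a(E_X)-\sum_i(1-1/m_i)\epsilon_i\nu_i(E_X)\bigr)+\epsilon(E_X)(r-1)$ for an exceptional divisor $E_Y$ over $Y$ dominating $E_X$ with ramification $r$. The paper simply asserts this formula in two lines, whereas you spell out (and worry about) the construction of the compatible birational model $\tilde\pi:\tilde Y\to\tilde X$; your concerns are legitimate but the core argument is identical.
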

\begin{proof}
We have
$$K_\mathcal{G}=\pi^*K_\mathcal{F}+\sum_i\epsilon(Z_i)\pi^*\left(1-\frac{1}{m_i}\right)Z_i.$$

Therefore, if $E_Y$ is an exceptional divisor over $Y$ dominating $E_X$ with multiplicity $r$, we have for the corresponding discrepancies
$$a(E_Y)=r(a(E_X)-\sum_i \left(1-\frac{1}{m_i}\right)\epsilon_i\nu_i(E_X))+\epsilon(E_X)(r-1).$$
\end{proof}

The following result describes locally how one can produce foliations with canonical singularities.
\begin{proposition}
Let $(\mathcal{U},\mathcal{F})$ be a germ of a smooth foliation on a complex manifold of dimension $n \geq 3$ and $D \subset U$ a smooth divisor. Let $\pi: (\mathcal{V},\mathcal{G}) \to (\mathcal{U},\mathcal{F})$ be a covering ramified along $D$ where $\mathcal{G}:=\pi^*\mathcal{F}$. Then
\begin{enumerate}
\item If $\mathcal{F}$ is transverse to $D$ then $\mathcal{G}$ is smooth.
\item If $D$ has only isolated non-degenerate quadratic-type tangencies with $\mathcal{F}$ (i.e. if $D=(h=0)$ then the restriction of $h$ to a leaf has only isolated non-degenerate critical points), then $\mathcal{G}$ has isolated canonical singularities.
\item If the degree of the covering $\pi$ is  $2$ and $D$ has only quadratic-type tangencies with $\mathcal{F}$ (i.e. if $D=(h=0)$ then the restriction of $h$ to a leaf has multiplicity less or equal to $2$ at any point) then $\mathcal{G}$ has canonical singularities.
\end{enumerate}
\end{proposition}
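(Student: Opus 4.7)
My plan is to work in local coordinates on $\mathcal{U}$, using the smoothness of $\mathcal{F}$ to write it as $\{dz_1=0\}$, and to apply Proposition \ref{lc} in cases (2) and (3) after writing an explicit local first integral for $\mathcal{G}$ on $\mathcal{V}$.

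For case (1), I would invoke the simultaneous straightening of a smooth foliation and a transverse smooth divisor to pick coordinates $(z_1,\dots,z_n)$ on $\mathcal{U}$ with $\mathcal{F}=\{dz_1=0\}$ and $D=\{z_n=0\}$. The cover is then $\pi(w_1,\dots,w_n)=(w_1,\dots,w_{n-1},w_n^m)$, and the pullback form $\pi^\ast dz_1=dw_1$ is nowhere vanishing, hence $\mathcal{G}$ is smooth.

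For case (2), at each isolated tangency $p$ the Morse lemma with parameter $z_1$ lets me write $h = z_1+z_2^2+\cdots+z_n^2$ while preserving $\mathcal{F}=\{dz_1=0\}$. Setting $u:=h$ as a new coordinate replacing $z_1$, one has $D=\{u=0\}$ and the cover reads $u=v^m$. On $\mathcal{V}$ the pullback first integral of $\mathcal{G}$ is $f=\pi^\ast z_1 = v^m - z_2^2 - \cdots - z_n^2$, whose differential vanishes only at the origin; hence $\mathcal{G}$ has an isolated singularity there. The divisor $\{f=0\}$ is the standard $A_{m-1}$-type hypersurface singularity, canonical (hence log-canonical) for $n\geq 3$, and Proposition \ref{lc} yields the claim.

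Case (3) proceeds similarly without Morse lemma: via the implicit function theorem I would write $D=\{z_1=\tilde\beta(z_2,\dots,z_n)\}$, the multiplicity-$\leq 2$ hypothesis translating to $\mult_0(\tilde\beta)=2$. The double cover $u=w^2$ (with $u=z_1-\tilde\beta$) gives the local first integral $f=w^2+\tilde\beta$ on $\mathcal{V}$, with singular set $\{w=0\}\cap\{d\tilde\beta=0\}$. At a singular point $q$, shifting $f$ so that $f(q)=0$ identifies the local zero divisor with $\{w^2=g(z')\}$, a double cover of $\bC^{n-1}$ branched along $\{g=0\}$ with $\mult_0 g=2$. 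By the cyclic cover formula this divisor is log-canonical iff $(\bC^{n-1},\tfrac{1}{2}\{g=0\})$ is, iff $LCT(g)\geq\tfrac{1}{2}$; the splitting lemma writes $g\sim y_1^2+\cdots+y_r^2+\Psi$ with $\Psi$ of order $\geq 3$, and Thom--Sebastiani then gives $LCT(g)\geq\min(1,r/2)\geq\tfrac{1}{2}$, so Proposition \ref{lc} applies. The main obstacle is precisely this log-canonicity check in case (3) when the Hessian of $\tilde\beta$ is allowed to be degenerate; the restriction to degree-2 covers is essential because the cyclic cover coefficient $1-\tfrac{1}{m}=\tfrac{1}{2}$ exactly matches the tight LCT bound $1/\mult(\tilde\beta)=\tfrac{1}{2}$, with no slack for higher $m$ unless one assumes the non-degeneracy of case (2).
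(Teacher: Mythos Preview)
Your arguments for (1) and (2) coincide with the paper's: local straightening in (1), and in (2) the holomorphic Morse lemma produces the first integral $v^m-\sum_{i\ge 2}z_i^2$, whose zero locus is the $A_{m-1}$ hypersurface singularity (canonical for $n\ge 3$ by Reid's criterion $\tfrac{1}{m}+\tfrac{n-1}{2}>1$), so Proposition~\ref{lc} applies. One small remark: you do not actually need a \emph{parametrized} Morse lemma in (2); once the implicit function theorem gives $D=\{z_1=\tilde\beta(z')\}$, the ordinary Morse lemma applied to $\tilde\beta$ in the variables $z_2,\dots,z_n$ already preserves the foliation $\{dz_1=0\}$, exactly as in your treatment of (3).

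For (3) your route is genuinely different from the paper's. The paper does not pass to $\mathcal{V}$ or write down any first integral there; instead it stays on $\mathcal{U}$ and invokes Lemma~\ref{ramcan} on ramified covers of foliated pairs: since $\mathcal{F}$ is smooth, the canonicity of the foliated triple $(\mathcal{U},\tfrac{1}{2}D,\mathcal{F})$ reduces to the canonicity of the ordinary pair $(S,\tfrac{1}{2}D|_S)$ on each leaf $S$, which follows from the multiplicity-$\le 2$ hypothesis; Lemma~\ref{ramcan} then transports canonicity up to $(\mathcal{V},\mathcal{G})$. Your approach bypasses the foliated-boundary formalism of Definition~\ref{def:sing} and Lemma~\ref{ramcan} altogether: you compute the explicit first integral $f=w^2+\tilde\beta$ on $\mathcal{V}$, reduce via Proposition~\ref{lc} to checking that $\{w^2+g=0\}$ is log-canonical at each singular point, and verify this by the splitting lemma plus Thom--Sebastiani for log canonical thresholds. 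Both arguments are valid. The paper's is more structural and makes transparent, through the boundary coefficient $1-\tfrac{1}{m}$, why the degree of the cover must match the tangency order; yours is more self-contained (using only Proposition~\ref{lc} from the paper) and exhibits the same tightness at the level of thresholds, namely $\mathrm{lct}(g)\ge 1/\mathrm{mult}(g)=\tfrac{1}{2}$ with no room for $m>2$ without the non-degeneracy assumed in (2).
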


\begin{proof}
\begin{enumerate}
\item If $\mathcal{F}$ is transverse to $D$, we can choose local coordinates $(z_1,\dots,z_n)$ on $\mathcal{U}$ such that $\mathcal{F}$ is given by $dz_1=0$  and $D=(z_2=0).$ The covering $\pi$ is given by $(z_1,t,\dots,z_n) \to (z_1,t^m,\dots,z_n)$ and therefore $\mathcal{G}$ is given by $dz_1=0.$

\item If $D$ has only isolated quadratic-type tangencies with $\mathcal{F}$, locally by the holomorphic Morse lemma, we can find local coordinates such that $h$ the local equation defining $D$ is written,
$$h(z)=z_1+\sum_{i=2}^nz_i^2.$$
So, in local coordinates $(t,z_2,\dots,z_n)$, $\mathcal{G}$ is given by $d(t^m-\sum_{i \geq 2} z_i^2)=0.$ 
Now, from \cite{Reid}, we know that hypersurfaces given by $t^m-\sum_{i \geq 2} z_i^2=0$ have at most canonical singularities if $\frac{1}{m}+\frac{n-1}{2}>1$. Then we conclude by proposition \ref{lc}

\item If $S$ is a germ of leaf, then by the hypotheses the pair $(S,\left(1-\frac{1}{2}\right)D|_S)$ is canonical and we conclude by Lemma \ref{ramcan}.
\end{enumerate}
\end{proof}

Now, let us see how we can ensure globally the conditions of the previous proposition for a smooth divisor $D$ in a complex projective manifold $X$.

Let $X_1:=G(n-1,T_X)$ be the Grassmannian bundle. Then the inclusion $i : D \hookrightarrow X$ induces a lift $i_1: D \to X_1$.
A smooth codimension-one foliation $\mathcal{F}$ on $X$ gives a section $F \subset X_1$.

The condition for $\mathcal{F}$ to be transverse to $D$ is equivalent to $$i_1(D)\cap F = \emptyset.$$

The condition for $D$ to have only isolated non-degenerate quadratic-type tangencies with $\mathcal{F}$ translates into the fact that $i_1(D)\cap F$ is finite and $i_1$ has some non-degeneracy property over that set.

Let us illustrate these situations in the case where $X$ is an abelian variety.

\begin{prop}\label{prop:pullback}
Let $A$ be an $n$-dimensional complex abelian variety and $L$ a line bundle on $A$. Let $D$ be a smooth divisor in the linear system $|mL|$, where $m>1$. Let $\pi:X\to A$ be the degree $m$ cyclic cover of $A$ ramified along $D$. If $\cF$ is a generic linear codimension one foliation on $A$, the induced foliation $\cG:=\pi^*\cF$ on $X$ has at most isolated canonical singularities.
\end{prop}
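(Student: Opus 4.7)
The plan is to apply the preceding proposition fiber-by-fiber over $D$, after a genericity argument on the parameter space of linear foliations. A linear codimension-one foliation on $A$ is defined by a nonzero translation-invariant $1$-form $\omega \in H^0(A, \Omega^1_A) \cong T_0^*A$, up to scalar; such an $\omega$ is nowhere vanishing and closed, so $\cF_\omega$ is smooth. The parameter space of such foliations is $\bP^{n-1} := \bP(T_0^*A)$. Since $D$ is smooth, the cyclic cover $X$ is smooth, so the local analysis of the previous proposition applies at every point of $\pi^{-1}(D)$. The tangency locus of $\cF_\omega$ with $D$ is the set of $x \in D$ at which $\omega|_{T_xD}=0$; using $\Omega^1_A \cong \cO_A^{\oplus n}$ this is exactly $\gamma^{-1}([\omega])$, where
\[
\gamma : D \longrightarrow \bP^{n-1}, \qquad x \longmapsto [N^*_{D/A,x}]
\]
is the Gauss map sending $x$ to its conormal line.

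I would then split according to whether $\gamma$ is dominant. If $\gamma$ is not dominant, its image is a proper subvariety of $\bP^{n-1}$ and for generic $[\omega]$ the tangency locus is empty, i.e.\ $\cF_\omega$ is transverse to $D$; item (1) of the preceding proposition gives that $\cG$ is then smooth (a fortiori with isolated canonical singularities). If $\gamma$ is dominant, then since $\dim D = \dim \bP^{n-1} = n-1$, generic smoothness in characteristic zero produces a nonempty Zariski open $U \subset \bP^{n-1}$ over which $\gamma$ is étale: for $[\omega] \in U$, $\gamma^{-1}([\omega])$ is a finite set of reduced points at each of which $d\gamma$ is an isomorphism.

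To conclude via item (2) of the preceding proposition it then suffices to check that, at such a tangency point $x_0$, the isomorphism property of $d\gamma_{x_0}$ is equivalent to the non-degenerate quadratic tangency condition of the local statement. Pick linear coordinates $z_1,\ldots,z_n$ on $A$ such that $\omega = dz_1$ (so $\cF_\omega$ is $\{dz_1=0\}$), and let $h$ be a local defining function of $D$ at $x_0$. Tangency means $\partial h/\partial z_i(x_0)=0$ for $i\geq 2$, while $\partial h/\partial z_1(x_0)\neq 0$ by smoothness of $D$; in the affine chart around $[dz_1]\in\bP^{n-1}$ the map $\gamma$ reads $x\mapsto \bigl(\partial h/\partial z_i(x)\big/\partial h/\partial z_1(x)\bigr)_{i=2}^n$, whose Jacobian at $x_0$, restricted to $T_{x_0}D=\{v_1=0\}$, is, up to the nonzero scalar $\partial h/\partial z_1(x_0)$, the Hessian matrix $\bigl(\partial^2 h/\partial z_i\partial z_j(x_0)\bigr)_{i,j\geq 2}$. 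Its invertibility is precisely the non-degenerate quadratic tangency of $D$ with $\cF_\omega$ at $x_0$, so item (2) of the preceding proposition yields that $\cG$ has at most isolated canonical singularities.

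The only delicate step is the dichotomy on $\gamma$; the remaining ingredients are the Hessian identification (a direct computation) together with generic smoothness. I do not anticipate a serious obstacle, the main point being simply to justify that "generic linear foliation" corresponds to a generic point $[\omega]\in\bP^{n-1}$, which is the natural moduli space used here.
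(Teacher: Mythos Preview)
Your proposal is correct and follows essentially the same approach as the paper: the Gauss map $\gamma$ is precisely the map $\phi$ the authors introduce, the dominant/non-dominant dichotomy is identical, and your Hessian identification is a more explicit version of the paper's assertion that smoothness of the fiber is equivalent to non-degeneracy of $(\partial^2 Q/\partial z_i\partial z_j)$. Your write-up is in fact a bit more careful in spelling out why the differential of $\gamma$ on $T_{x_0}D$ coincides (up to scalar) with the restricted Hessian.
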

\begin{proof}
By the triviality of the tangent bundle $T_A$, the lifting $i_1: D \to X_1$ described above yields
a map
$$
\phi : D\to \bP^{n-1},
$$
which is locally defined by $[\partial h/\partial z_1:\ldots:\partial h/\partial z_n]$, where $\{h=0\}$ is a local equation for $D$.
If we choose a base $dz_1,\ldots,dz_n$ of invariant differentials, the foliation  $\cF$ corresponds to a point $p\in\bP^{n-1}$.
We have two cases.
If $\phi$ is not dominant, then for a generic choice of $\cF$ the preimage $\phi^{-1}(p)$ will be empty, that is, $D$ is transverse to $\cF$. Therefore the pullback $\pi^*\cF$ is smooth, by the local result above, item (1).

If $\phi$ is dominant, for a  generic choice of $\cF$,
we have that $\phi^{-1}(p)$ is smooth. Notice that we may assume that $D$ is given locally by the equation $\{z_1 + Q(z_1,\ldots,z_n)=0\}$, where $\deg(Q)\geq 2$ (if not, $D$ is transverse to $\cF$ and we are done as before). Then, the smoothness of $\phi^{-1}(p)$ is equivalent to the fact that $(\partial^2Q/\partial z_i\partial z_j)$ is non-degenerate. Therefore we are done by the local result above, item (2).
\end{proof}
\begin{example}
Let $L$ be an ample line bundle  on an abelian variety $A$. Let $D\in|mL|$ be a smooth divisor and
$\pi:X\to A$ be the degree $m$ cyclic cover ramified along $D$. Observe that $X$ is of general type. Take a generic linear codimension one foliation $\cF$ on $A$.
If $n:=\dim(A)\geq 3$, by Proposition \ref{prop:pullback} the foliated pair $(X,\pi^*\cF)$ satisfies the hypotheses of Corollary D.  In particular $X$ contains no maximal rank holomorphic mapping $f:\bC^{n-1}\to X$ tangent to $\pi^*\cF$.
\end{example}

\begin{remark}
Using \cite[Main Theorem]{NWY} one may obtain that $f:\bC^{n-1}\to X$ is not Zariski dense in the previous example.
\end{remark}

Now we turn back to the condition $(3)$ in the local result above i.e. when $D$ has only quadratic-type tangencies with $\mathcal{F}$. This condition can be verified considering the second jet-bundle $X_2$ as defined in \cite{PaRou} which we refer to for details. $X_2$ is a Grassmannian bundle over $X_1$. We have a lifting $i_2: D \to X_2$ and the foliation defines a section $s_2: X \to X_2$. The previous condition translates as
$$i_2(D)\cap s_2(X) = \emptyset.$$

\section{The dimension $3$ case}

\subsection{Desingularization of currents}
In the case of surfaces, the following property of the current is quite useful and often used in \cite{McQ0} and \cite{Br}. Consider a tower of blow-up mappings $\pi_k: X_k \to X_{k-1}$, with the convention that $X_0=X$ the original surface and $E_k$ denotes the corresponding exceptional divisor. Then
$$\lim_{k \to \infty} [\Phi_k].[E_k]=0,$$
where $[\Phi_k]$ is the current associated to the lifting of $f: \bC \to X$.

In fact, McQuillan \cite{Bloch} proves the following stronger result.
\begin{theorem}\label{blo}
Let $(\mathcal{Y},\mathcal{F})$ be a foliated non-singular surface with canonical singularities and $b$ a formal branch of $\mathcal{F}$ through a canonical singularity $z$. Let $\mathcal{Y}_1$ be obtained by blowing up $\mathcal{Y}_0=\mathcal{Y}$ in $z$, with $b_1$ the proper transform of the branch $b_0=b$, $\mathcal{Y}_2$ from $\mathcal{Y}_1$  by blowing up the crossing point of $b_1$ with the exceptional divisor etc., and $\mathcal{E}_k$ the exceptional curve on $\mathcal{Y}_k$ blown down by $\rho_{k,k-1}$ ($\rho_{km}$ from $\mathcal{Y}_k$ to $\mathcal{Y}_m$, $m <k$ being the projection) then for $H$ ample there is a positive rational constant $\alpha$ such that for all $k$, every sufficiently large and divisible multiple of $\alpha (\sqrt(k)^{-1}\rho_{k0}^*H-\mathcal{E}_k$ is generated by its global sections outside the exceptional curves other than $\mathcal{E}_k$.
\end{theorem}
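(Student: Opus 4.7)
The plan is to prove this base-point-freeness statement for $D_k:=\alpha(k^{-1/2}\rho_{k0}^{*}H - \mathcal{E}_k)$ via an explicit Zariski decomposition followed by a Grauert contraction. First I would work in the natural basis $\{\rho_{k0}^{*}H,F_1,\ldots,F_k\}$ of $\mathrm{Pic}(\mathcal{Y}_k)\otimes\bQ$, where $F_j:=\rho_{kj}^{*}\mathcal{E}_j$ is the total transform of the $j$-th exceptional divisor; these satisfy $F_i\cdot F_j=-\delta_{ij}$ and $F_j\cdot\rho_{k0}^{*}H=0$. Writing the strict transform $E_j$ of $\mathcal{E}_j$ in $\mathcal{Y}_k$ in this basis (in the prototypical tangential chain, $E_j=F_j-F_{j+1}$ for $j<k$ and $E_k=\mathcal{E}_k=F_k$) a short computation gives
\[D_k\cdot E_k=\alpha, \qquad D_k\cdot E_{k-1}=-\alpha, \qquad D_k\cdot E_j=0 \ \ (j<k-1),\]
so, using that the formal branch $b$ is transcendental, $E_{k-1}$ is the only irreducible curve on $\mathcal{Y}_k$ on which $D_k$ is negative.

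Next I would compute the Zariski decomposition $D_k=P_k+N_k$ explicitly. Since the intersection form on $E_1,\ldots,E_{k-1}$ is the negative definite $A_{k-1}$-type tridiagonal matrix (diagonal $-2$, off-diagonal $+1$), solving the linear system $P_k\cdot E_j=0$ produces the arithmetic progression
\[N_k\ =\ \sum_{j=1}^{k-1}\frac{j\alpha}{k}\,E_j.\]
One then checks $P_k\cdot E_k=\alpha/k>0$ and $P_k^{2}=\alpha^{2}(H^{2}-1)/k$, which we can arrange to be strictly positive by passing to a large enough multiple of $H$. Thus $P_k$ is a big nef $\bQ$-divisor whose null locus consists exactly of the chain $E_1\cup\ldots\cup E_{k-1}$.

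The third step is to contract and use ampleness. Since the null locus is negative definite, Grauert--Artin's theorem produces a birational morphism $\sigma:\mathcal{Y}_k\to\mathcal{Y}_k'$ onto a normal projective surface contracting exactly the chain to an $A_{k-1}$ cyclic quotient singularity; $\sigma$-triviality of $P_k$ gives $P_k=\sigma^{*}P_k'$ for a $\bQ$-Cartier divisor $P_k'$ on $\mathcal{Y}_k'$ with $(P_k')^{2}=P_k^{2}>0$. For any irreducible $C'\subset\mathcal{Y}_k'$, $P_k'\cdot C'=P_k\cdot\widetilde{C}>0$, where $\widetilde{C}$ is the strict transform: either $\widetilde{C}=\mathcal{E}_k$ (giving $\alpha/k$) or $\widetilde{C}$ comes from an algebraic curve $C_0$ on $\mathcal{Y}$, in which case $P_k\cdot\widetilde{C}=\alpha(k^{-1/2}H\cdot C_0-k^{-1}\sum_jm_j)>0$ because the multiplicities $m_j$ of $C_0$ at the iterated centres are uniformly bounded on any bounded family of curves (since $b$ is transcendental). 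By Nakai--Moishezon $P_k'$ is ample, so $mP_k'$ is very ample for every sufficiently large and divisible $m$, hence $mP_k=\sigma^{*}(mP_k')$ is globally generated on $\mathcal{Y}_k$.

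Finally, tensoring the canonical section $s_{mN_k}\in H^{0}(\mathcal{O}_{\mathcal{Y}_k}(mN_k))$ with the globally generating sections of $\mathcal{O}(mP_k)$ yields sections of $\mathcal{O}(mD_k)=\mathcal{O}(mP_k+mN_k)$ generating this sheaf at every point outside $\mathrm{Supp}(N_k)=E_1\cup\ldots\cup E_{k-1}$, which is exactly the statement. I expect the main technical obstacle to lie in verifying that $P_k'$ is ample on the singular surface $\mathcal{Y}_k'$, equivalently that no algebraic curve on $\mathcal{Y}_k$ outside the chain makes $D_k$ negative: if $b$ were an \emph{algebraic} branch, its strict transform $\widetilde b$ would satisfy $D_k\cdot\widetilde b=\alpha(k^{-1/2}H\cdot b-1)<0$ for $k\gg0$, forcing it into $N_k$ and breaking the conclusion; hence the centrality of the formal/transcendental nature of $b$.
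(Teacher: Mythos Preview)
The paper does not prove this theorem; it is quoted from McQuillan's preprint \cite{Bloch} and used as a black box (in the third step leading to Proposition~\ref{effective1}). So there is no proof in the paper to compare against, and I will assess your argument on its own merits. Your Zariski--decomposition computation is correct: with $E_j=F_j-F_{j+1}$ one indeed gets the $A_{k-1}$ chain, $N_k=\sum_{j=1}^{k-1}\tfrac{j\alpha}{k}E_j$, $P_k=\alpha k^{-1/2}\rho_{k0}^{*}H-\tfrac{\alpha}{k}\sum_{j=1}^{k}F_j$, $P_k\cdot E_k=\alpha/k$ and $P_k^{2}=\alpha^{2}(H^{2}-1)/k$. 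The reduction ``$P_k$ nef and big $\Rightarrow$ $|mP_k|$ free off the null locus $\Rightarrow$ $|mD_k|$ free off $E_1\cup\dots\cup E_{k-1}$'' is also fine.

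The genuine gap is exactly where you place it, and your proposed justification does not close it. You must show $P_k\cdot\widetilde{C_0}\ge 0$, i.e.\ $\alpha k^{1/2}\,H\!\cdot\!C_0\ge\sum_{j=1}^{k}m_j$, for \emph{every} irreducible curve $C_0$ on $\mathcal{Y}$, not for a bounded family; the phrase ``uniformly bounded on any bounded family (since $b$ is transcendental)'' is not an argument. Nothing in your write-up uses the foliation $\mathcal{F}$ beyond the existence of the formal arc $b$, yet this is where the content lies in McQuillan's proof. For $C_0$ not $\mathcal{F}$--invariant, the order of contact of $C_0$ with the leaf $b$ at $z$ is controlled by the global tangency number $\mathrm{tang}(\mathcal{F},C_0)$, which is quadratic in $H\!\cdot\!C_0$; combining this with Cauchy--Schwarz $\sum_{j\le k}m_j\le k^{1/2}\bigl(\sum_j m_j^{2}\bigr)^{1/2}$ and the Hodge index bound on $C_0^{2}$ is what produces the $k^{1/2}$. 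For $\mathcal{F}$--invariant $C_0$ one uses that a \emph{canonical} (reduced) singularity has exactly two separatrices which separate after a bounded number of blow-ups. Without these foliation-specific inputs the nefness of $P_k$ is simply not established.

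A minor but relevant point: in your reading $D_k=\alpha(k^{-1/2}\rho_{k0}^{*}H-\mathcal{E}_k)$ the constant $\alpha$ is a global positive factor, hence irrelevant to nefness, which makes its presence in the statement vacuous. The unmatched parenthesis in the quoted statement and the way the result is invoked in the paper (``as soon as $\alpha$ is sufficiently big'') indicate that the intended divisor is $\alpha k^{-1/2}\rho_{k0}^{*}H-\mathcal{E}_k$, with $\alpha$ scaling only $H$; under that reading $\alpha$ is genuinely useful and $P_k^{2}=(\alpha^{2}H^{2}-1)/k$.
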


In this section we would like to prove an analoguous desingularization statement for the currents constructed above, in the case $\dim X= 3$, that will be used in a sequel of this paper.

We have to study the following situation. Let $(X, \mathcal{ F})$ be a smooth projective variety of dimension three equipped with a foliation of codimension one. Let $Z$ be a smooth leaf of the foliation and $C$ be a compact curve contained in it. 

We perform the following construction. We denote $X=X_0$, $V=V_0$ and $C=C_0$. Let $f_{1}:X_1\to X_0$ be the blow up along $C_0$, $E_1$ be the exceptional divisor and $Z_1\subset X_1$ the strict transform of $Z_0$. Let $C_1:=E_1\cdot Z_1$.  Inductively we define the sequence $(X_k,  C_k, E_k, Z_k)$  where: $f_{k,k-1}:X_k\to X_{k-1}$ is the blow up of $X_{k-1}$ along $C_{k-1}$, $E_k$ is the exceptional divisor  of $f_{k,k-1}$ and $Z_k$ is the strict trasform of $Z_{k-1}$. We will denote by $f_k$ the natural projection $f_k:X_k\to X_0$ and, for every $i<k$, we denote by $f_{k,i}$ the natural projection $f_{k,i}:X_k\to X_i$. Let $i<k$, by abuse of notation, we will denote by $E_i$ the divisor $f_{k,i}^\ast(E_i)$. We fix an ample divisor $H$ on $X_0$ and we denote by $H$ the nef divisor $f_k^\ast(H)$ on $X_k$.

Then we want to prove the following.
\begin{theorem}\label{effectiveandblowups} In the situation above, there is a constant $\alpha$ independent of $k$ such that, for every $k$ and $m$ sufficiently big (possibly depending in $k$), the divisor on $X_k$ 
$$m\left({{\alpha}\over{k^{1/3}}}H-E_k\right)$$
is effective.
\end{theorem}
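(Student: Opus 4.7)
The plan is to produce, for each $k$ and for $m$ sufficiently large, a section $s \in H^0(X_0, NH)$ with $N := \lfloor \alpha m/k^{1/3} \rfloor$ whose pullback $f_k^\ast s$ vanishes along $E_k$ to order $\ge m$. Such $s$ yields the effective divisor $Nf_k^\ast H - m E_k$ on $X_k$, which is the desired $m\bigl(\alpha H/k^{1/3} - E_k\bigr)$ after clearing denominators; the whole task is to arrange $\alpha$ independently of $k$.

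I start from a local computation of the divisorial valuation $\nu_k := \ord_{E_k}$ on the function field $k(X_0)$. Fix $p \in C_0$ and local coordinates $(x, y, z)$ near $p$ with $C_0 = \{x = y = 0\}$ and $Z_0 = \{y = 0\}$. At each blow-up only the distinguished chart containing the strict transform of $C_i$ is relevant (the other chart absorbs $Z_i$ into $E_{i+1}$), and a short induction gives coordinates $(x^{(k)}, y^{(k)}, z)$ on the $k$-th model, related to the original by $x = x^{(k)}$ and $y = (x^{(k)})^k y^{(k)}$, with $E_k = \{x^{(k)} = 0\}$. Consequently $\nu_k(x) = 1$, $\nu_k(y) = k$, $\nu_k(z) = 0$, and for $s \in H^0(X_0, NH)$ with Taylor expansion $s = \sum c_{ij}(z) x^i y^j$ at the generic point of $C_0$ we have $\nu_k(s) = \min\{i + kj : c_{ij} \not\equiv 0\}$. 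Globally this weighted valuation is encoded by the filtration on $S^n \mathcal{N}^\ast_{C_0/X_0}$ induced by the conormal exact sequence
\begin{equation*}
0 \to \mathcal{N}^\ast_{Z_0/X_0}|_{C_0} \to \mathcal{N}^\ast_{C_0/X_0} \to \mathcal{N}^\ast_{C_0/Z_0} \to 0,
\end{equation*}
whose graded pieces are $(\mathcal{N}^\ast_{Z_0/X_0}|_{C_0})^{\otimes j} \otimes (\mathcal{N}^\ast_{C_0/Z_0})^{\otimes (n-j)}$. Setting $V_q := \{s \in H^0(X_0, NH) : \nu_k(s) \ge q\}$, the requirement $\nu_k(s) \ge q$ forces the degree-$n$ Taylor part $s_n \in H^0(C_0, S^n \mathcal{N}^\ast_{C_0/X_0} \otimes NH|_{C_0})$ to lie in filtration level $\ge \lceil (q-n)/(k-1) \rceil$ for every $n < q$.

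A direct dimension count then finishes the proof. Bounding $h^0$ of each graded line bundle on $C_0$ by $N(H \cdot C_0) + nD + 1$ via Riemann-Roch, where $D$ depends only on $\deg \mathcal{N}^\ast_{Z_0/X_0}|_{C_0}$ and $\deg \mathcal{N}^\ast_{C_0/Z_0}$ (i.e., on the initial geometric data, \emph{not} on $k$), and summing over forbidden pairs $(i,j)$ gives
\begin{equation*}
\codim V_q \le \frac{(H \cdot C_0)\, q^2 N}{2k} + \frac{D\, q^3}{6k} + \text{(lower order)}.
\end{equation*}
Combined with $h^0(X_0, NH) = \tfrac{H^3}{6} N^3 + O(N^2)$ from Riemann-Roch and Kodaira vanishing, the choice $q = c N k^{1/3}$ with $c > 0$ small enough in terms of $H^3$, $H \cdot C_0$ and $D$ yields $\codim V_q < h^0(X_0, NH)$ for $N$ large, so that $V_q$ contains a nonzero $s$; setting $\alpha := 1/c$ completes the proof. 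The exponent $1/3$ (rather than the $1/2$ that the quadratic term alone would produce) is forced by the cubic-in-$q$ contribution $Dq^3/k$, reflecting the linear growth in $n$ of the degrees of the twisted conormal bundles. The main subtlety is the global interpretation in the second step: the individual Taylor monomials $x^iy^j$ have no intrinsic meaning, and the weighted condition $\nu_k(s) \ge q$ must be reformulated as a constraint involving only the filtration determined by the flag $C_0 \subset Z_0 \subset X_0$.
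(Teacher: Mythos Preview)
Your argument is correct and takes a genuinely different route from the paper. The paper works on $X_k$: it first establishes (Proposition~\ref{effective1}) that $F_{\alpha,k}:=\alpha k^{2/3}H-\sum_{i=1}^kE_i$ is effective, from which the theorem follows via the identity $\tfrac{\alpha}{k^{1/3}}H-E_k=\tfrac{1}{k}\bigl(F_{\alpha,k}+\sum_{j<k}j\,\overline E_j\bigr)$. Effectivity of $F_{\alpha,k}$ is obtained by showing $F_{\alpha,k}^3>0$ (through a recursion yielding periodicity of the numbers $E_i^3$), proving a Kawamata--Viehweg-type vanishing lemma (Lemma~\ref{nefonrestriction}) to kill higher cohomology, and checking that $F_{\alpha,k}$ restricted to a generic hyperplane section is nef and big by invoking McQuillan's surface result (Theorem~\ref{blo}). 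Your approach bypasses all three ingredients: no intersection recursion, no vanishing, no reduction to surfaces. Instead you run a direct parameter count on $X_0$, in the spirit of Lemma~\ref{formallemma1}, exploiting the explicit form of the weighted valuation $\nu_k$ attached to the flag $C_0\subset Z_0\subset X_0$. The paper's route yields a bit more (bigness of $F_{\alpha,k}$ and a reusable intermediate statement); yours is more elementary and self-contained, and makes the exponent $1/3$ transparent as the balance point between the cubic term $Dq^3/k$ and $h^0(NH)\sim N^3$. The one place you rightly flag as delicate---that the individual Taylor coefficients $c_{ij}$ are not globally defined---is cleanly handled by working instead with the algebraic filtration $\mathfrak a_d:=(f_k)_*\mathcal O_{X_k}(-dE_k)$ on $\mathcal O_{X_0}$: each quotient $\mathfrak a_d/\mathfrak a_{d+1}$ is a vector bundle on $C_0$ which, via the formal leaf $Z_0$, carries a filtration with line-bundle graded pieces $(\mathcal N^*_{C_0/Z_0})^{\otimes i}\otimes(\mathcal N^*_{Z_0/X_0}\vert_{C_0})^{\otimes j}$ for $i+kj=d$, and your bound on $\codim V_q$ then goes through verbatim.
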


For this we need the following.

\begin{proposition}\label{effective1} With the notation above, for $\alpha$ sufficiently big, the line bundle $$F_{\alpha,k}:=\alpha k^{2/3}H-\sum_{i=1}^kE_i$$ is effective on $X_k$.
\end{proposition}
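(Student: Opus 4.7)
My plan is to prove Proposition \ref{effective1} by producing a nonzero section of $F_{\alpha,k}$ via a dimension count on $X_0$. Writing $N := \alpha k^{2/3}$ and $\mathcal J_k := (f_k)_\ast \mathcal O_{X_k}\bigl(-\sum_{i=1}^k E_i\bigr)$, the projection formula gives
\[
H^0(X_k, F_{\alpha,k}) \;=\; H^0\bigl(X_0, NH \otimes \mathcal J_k\bigr),
\]
so it suffices to show this cohomology group is nonzero for $\alpha$ sufficiently large, independently of $k$.

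The first key step is to give a local description of $\mathcal J_k$. Since $C_0$ is a compact smooth curve inside the smooth (possibly transcendental) leaf $Z_0$, around each point of $C_0$ I can choose analytic coordinates $(x,y,z)$ on $X_0$ with $Z_0 = \{z=0\}$ and $C_0 = \{y=z=0\}$. By induction on $k$, the tower of blow-ups admits a chart on $X_k$ with coordinates $(x,y,u_k)$ satisfying $z = y^k u_k$, in which $\sum_{i=1}^k E_i$ has local equation $y^k$. Taylor expanding $h = \sum c_{ab}(x) y^a z^b$ and pulling back yields $\sum c_{ab}(x)\, y^{a+kb}\, u_k^b$, so $h \in \mathcal J_k$ iff $c_{a,0}(x) \equiv 0$ for $a = 0,1,\dots,k-1$; equivalently $\mathcal J_k$ coincides locally with the ideal $(z, y^k)$. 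Consequently $\mathcal O_{X_0}/\mathcal J_k$ is supported on $C_0$ and admits a filtration of length $k$ whose $j$-th graded piece is globally the line bundle $L^{\otimes j}$ on $C_0$, where $L := N^\ast_{C_0/Z_0}$ is the conormal bundle of $C_0$ inside the leaf.

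Armed with this filtration, the second step is an asymptotic Riemann--Roch comparison. The cohomology sequence associated to $0 \to NH \otimes \mathcal J_k \to NH \to (NH)|_{\mathcal O_{X_0}/\mathcal J_k} \to 0$ on $X_0$ gives
\[
h^0\bigl(X_0, NH \otimes \mathcal J_k\bigr) \;\geq\; h^0(X_0, NH) \;-\; \sum_{j=0}^{k-1} h^0\bigl(C_0, NH|_{C_0} \otimes L^{\otimes j}\bigr).
\]
Setting $d := \deg(H|_{C_0})$ and $\ell := |\deg L|$, each summand on the right is bounded by $Nd + j\ell + 1$, so the subtracted total is at most $kNd + k^2\ell/2 + k$; meanwhile, for $N$ large, $h^0(X_0, NH) \geq \tfrac{H^3}{6} N^3 + O(N^2)$. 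Plugging in $N = \alpha k^{2/3}$ yields
\[
h^0(X_k, F_{\alpha,k}) \;\geq\; \tfrac{H^3}{6}\alpha^3 k^2 \;-\; \alpha\, d\, k^{5/3} \;-\; \tfrac{\ell}{2}k^2 \;-\; O\bigl(\alpha^2 k^{4/3} + k\bigr),
\]
and for $\alpha$ large enough once and for all (depending on $X_0, H, Z_0, C_0$ but not on $k$), the dominant term $\tfrac{H^3}{6}\alpha^3 k^2$ outweighs the rest uniformly in $k \geq 1$. This gives $h^0 > 0$, hence the effectivity of $F_{\alpha,k}$.

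The main obstacle I foresee is the global identification of the filtration graded pieces with $(N^\ast_{C_0/Z_0})^{\otimes j}$: the local generators of $\mathcal J_k$ involve the transverse-to-leaf coordinate $z$, which need not come from an algebraic function on $X_0$, so one must argue that the locally defined filtration of $\mathcal O_{X_0}/\mathcal J_k$ patches to a globally defined filtration whose graded pieces are canonically $(N^\ast_{C_0/Z_0})^{\otimes j}$. This is possible because $Z_0$, although possibly non-algebraic, is a smooth submanifold along the compact curve $C_0$, so the conormal bundle $L = N^\ast_{C_0/Z_0}$ is a bona fide line bundle on $C_0$. Once this geometric input is in hand, the Riemann--Roch bound of the second step delivers the proposition.
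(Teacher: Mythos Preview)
Your approach is correct and takes a genuinely different route from the paper's. The paper proceeds indirectly in four steps: it computes $F_{\alpha,k}^3>0$ via intersection theory on $X_k$ (including a recursion and periodicity argument for the $E_i^3$), proves a Kawamata--Viehweg type vanishing lemma to kill $h^i$ for $i\ge 2$, and verifies the hypothesis of that lemma by restricting to a general $D\in|H|$ and invoking McQuillan's surface result (Theorem~\ref{blo}) on the induced foliation $\mathcal{F}|_D$. You bypass all of this by identifying $(f_k)_*\mathcal{O}_{X_k}\bigl(-\sum E_i\bigr)$ directly with the ideal of the $k$-th infinitesimal neighborhood of $C_0$ inside the leaf $Z_0$, and then running a length-$k$ filtration with graded pieces $(N^*_{C_0/Z_0})^{\otimes j}$ and Riemann--Roch on the curve $C_0$. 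Your route is more elementary and self-contained, in particular it does not rely on Theorem~\ref{blo}; the paper's route yields slightly more, namely bigness of $F_{\alpha,k}$, as noted in the remark following its proof. Two small points worth tightening: first, your chart $(x,y,u_k)$ sees only the component $\overline{E}_k$ of $\sum E_i=\sum i\,\overline{E}_i$, so checking $y^k\mid f_k^*h$ there yields $\mathcal{J}_k\subseteq(z,y^k)$ but not the reverse inclusion --- for that you should observe (from the analogous chart on each $X_i$) that $\mathrm{ord}_{\overline{E}_i}(z)=i$ and $\mathrm{ord}_{\overline{E}_i}(y)=1$, whence $z,y^k\in\mathcal{J}_k$; second, the globalization issue you flag dissolves once you note that the filtration is by the sheaves $\mathcal{J}_j/\mathcal{J}_k$ with each $\mathcal{J}_j=(f_j)_*\mathcal{O}_{X_j}\bigl(-\sum_{i\le j}E_i\bigr)$ already algebraic, so the graded pieces are automatically algebraic line bundles on $C_0$.
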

The proof consists in four steps which will be stated in the lemmas below. But first let us show that this proposition implies the theorem.

\begin{proof}[Proof of Theorem \ref{effectiveandblowups}] Using the notations below, denoting by $\overline{E_i}$ the strict transform of $E_i$ in $X_k$ and remarking that $\sum_{i=1}^kE_i=\sum_{i=1}^ki\overline{E}_i$, we get
$${{\alpha}\over{k^{1/3}}}H-E_k={{1}\over{k}}\left(\left(\alpha k^{2/3}H-\sum_{i=1}^kE_i\right)+\sum_{j=1}^{k-1}j\overline{E}_j\right).$$
The conclusion easily follows because the two terms on the right are effective divisors.
\end{proof}

The {\it first step} is

\begin{lemma} 
If $\alpha$ is sufficiently big independently of $k$, then $F_{\alpha,k}^3>0$. 
\end{lemma}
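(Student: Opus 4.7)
The plan is to expand $F_{\alpha,k}^3$ and exploit the chain structure of the exceptional locus of $f_k:X_k\to X_0$. Writing $S:=\sum_{i=1}^k E_i$, the projection formula together with the fact that each $E_i$ is pulled back from some $X_i$ and eventually contracted onto the curve $C\subset X_0$ gives $H^2\cdot E_i = 0$ on $X_k$, hence
$$F_{\alpha,k}^3 \;=\; \alpha^3 k^2\,H^3 \;+\; 3\alpha\,k^{2/3}\,H\cdot S^2 \;-\; S^3.$$
It therefore suffices to establish the sharp bounds $H\cdot S^2 = O(k)$ and $S^3 = O(k^2)$ with constants independent of $k$, so that the leading term $\alpha^3 k^2 H^3$ dominates for $\alpha$ large enough, independently of $k$.

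I would first analyse the combinatorics of the exceptional locus via the identity $S = \sum_{i=1}^k i\,\overline E_i$, where $\overline E_i$ denotes the strict transform of $E_i^{(i)}$ in $X_k$. A short induction shows that at each stage the centre $C_i = Z_i\cap E_i^{(i)}$ is the section of the $\mathbb{P}^1$-bundle $E_i^{(i)}\to C_{i-1}$ corresponding to the direction of $Z_{i-1}$ in $N_{C_{i-1}/X_{i-1}}$, while the strict transform of $E_{i-1}^{(i-1)}$ meets $E_i^{(i)}$ along the disjoint section corresponding to the direction of $E_{i-1}^{(i-1)}$. Consequently every subsequent centre $C_j$ ($j\geq i+1$) avoids $\overline E_i$, so $\overline E_i\cong E_i^{(i)}$ as a variety, the $\overline E_i$'s intersect as a chain ($\overline E_i\cap\overline E_j=\emptyset$ for $|i-j|\geq 2$), and $D_i:=\overline E_i\cap\overline E_{i+1}$ is a smooth curve isomorphic to $C$.

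The next step is to compute the triple intersection numbers. Setting $\gamma := C_Z^2$, $\delta := N_{Z/X}\cdot C$, $d := H\cdot C$, and using $Z_i\cong Z$ together with the splitting $N_{C_i/X_i} = N_{C_i/Z_i}\oplus N_{C_i/E_i^{(i)}}$, the standard formula for the normal bundle of a section of a projectivised rank-two bundle yields inductively
$$C_i^2\big|_{E_i^{(i)}} = \delta-i\gamma\qquad\text{and}\qquad \deg N_{C_i/X_i} = \delta-(i-2)\gamma.$$
Combined with the relation $\overline E_i|_{\overline E_i} = -\xi_i - [C_i]$ on $\overline E_i\cong E_i^{(i)}$ for $i<k$ (respectively $-\xi_k$ for $i=k$), one then obtains the $k$-independent value $\overline E_i^{\,3} = -4\gamma$ for $i<k$, together with $\overline E_k^{\,3} = (k-2)\gamma-\delta$, $\overline E_i^{\,2}\cdot\overline E_{i+1} = (i+1)\gamma-\delta$, $\overline E_i\cdot\overline E_{i+1}^{\,2} = \delta-i\gamma$, and the mixed intersections $H\cdot\overline E_i^{\,2} = -2d$ for $i<k$, $H\cdot\overline E_k^{\,2} = -d$, $H\cdot\overline E_i\cdot\overline E_{i+1} = d$.

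Plugging these into the expansions of $H\cdot S^2$ and $S^3$, the prospective $k^5$ and $k^4$ contributions to $S^3$ cancel identically thanks to the combinatorial identity
$$i^2(i+1)\bigl[(i+1)\gamma-\delta\bigr] + i(i+1)^2\bigl[\delta-i\gamma\bigr] = i(i+1)\delta,$$
and a simpler cancellation takes place in $H\cdot S^2$, leaving
$$H\cdot S^2 \;=\; -d\,k,\qquad S^3 \;=\; -\gamma\,k^2 - \delta\,k.$$
Substituting,
$$F_{\alpha,k}^3 \;=\; (\alpha^3 H^3+\gamma)\,k^2 \;-\; 3\alpha d\,k^{5/3} \;+\; \delta\, k,$$
which is strictly positive for every $k\geq 1$ as soon as $\alpha$ is chosen large enough in terms of the fixed invariants $H^3,\gamma,\delta,d$. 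The main obstacle is precisely to exhibit these cancellations: since individual triple intersections are of order $k$, a term-by-term bound yields only $S^3=O(k^4)$, which is insufficient to keep the $\alpha^3 k^2 H^3$ term dominant for $\alpha$ bounded.
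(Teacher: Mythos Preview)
Your approach is correct and genuinely different from the paper's. Both begin by expanding $F_{\alpha,k}^3$ and reducing to a bound $S^3=O(k^2)$ (with $H\cdot S^2=O(k)$), but the mechanisms differ. The paper works with the \emph{pulled-back} divisors $E_i=f_{k,i}^\ast E_i^{(i)}$ and observes that the sequence $(E_i^3)$ satisfies the linear recursion $E_i^3=E_{i-1}^3-E_{i-2}^3$; since the characteristic roots are primitive sixth roots of unity, the sequence is periodic, hence bounded by a constant $R$ independent of $k$, and the estimate $\sum_i E_i^3\cdot(\text{linear in }i)=O(k^2)$ follows immediately. You instead pass to the \emph{strict transforms} $\overline E_i$, exploit the chain structure coming from the splitting $N_{C_i/X_i}=N_{C_i/Z_i}\oplus N_{C_i/E_i^{(i)}}$ (which makes the sections $C_{i+1}$ and $D_i$ disjoint), and compute all triple products explicitly. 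Your key identity $i^2(i+1)\overline E_i^2\overline E_{i+1}+i(i+1)^2\overline E_i\overline E_{i+1}^2=i(i+1)\delta$ is precisely what kills the $k^4$ contribution in $S^3$, and together with $\overline E_i^3=-4\gamma$ (for $i<k$) it yields the closed forms $H\cdot S^2=-dk$ and $S^3=-\gamma k^2-\delta k$. The paper's periodicity argument is slicker and avoids tracking the individual numbers $\gamma,\delta$; your computation, while heavier, produces an exact expression $F_{\alpha,k}^3=(\alpha^3H^3+\gamma)k^2-3\alpha d\,k^{5/3}+\delta k$, which is sharper and makes the choice of $\alpha$ completely explicit.
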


\begin{proof}
A systematic use of the projection formula gives the following relations:
$$(H^2,E_i)=0,$$ $$(H,E_i^2)=(H,C_0)=r_0 \text{  for a suitable } r_0>0,$$ $$(E_i,E_j,E_k)=0 \text{ if } k>j>i;$$ $$(E_i^2,E_j)=0  \text{ if } j>i,$$ and $$(E_i^2,E_j)=-(E_j,C_j) \text{ if } j<i.$$ Since $f_{i,i-1}^\ast(Z_{i-1})=Z_i+E_i$, again the projection formula gives $$(E_i,C_i)=-E_i^3.$$ The relations above gives the existence of a positive constant $s_0$ independent on $k$ such that
\begin{eqnarray*}F_{\alpha, k}^3 &= \alpha^3k^2H^3-\alpha k^{2/3}r_0k-\sum_{i=1}^kE_i^3-3\sum_{i=1}^k\sum_{j=i+1}^k(E_i,E_j^2)\\
&=\alpha k^2s_0-\sum_{i=1}^kE_i^3(1+3(k-(i+1)).\\
\end{eqnarray*}

Consequently $F_{\alpha,k}^3>0$ for a sufficiently big $\alpha$, as soon as we prove that there is a constant $R$ independent of $k$ such that, for every $i$ we have that $\vert E_i^3\vert\leq R$.

Let $N_i$ be the restriction of the normal sheaf of $Z_i$ to $C_i$. Since $C_i$ is the intersection of $E_i$ and $Z_i$, we obtain that 
$$-E_i^3=\deg(N_{i-1}(E_{i-1})\vert_{C_{i-1}}).$$
Again, by projection formula we get
$$-E_i^3=(N_{i-2},C_{i-2}).$$
Thus, using the fact that $-E_j^3=(E_j,C_j)$, by induction we get
\begin{eqnarray*}-E_i^3-E_{i-2}^3-E_{i-3}^3-\dots-E_1^3=-a,\\
-E_{i-1}^3-E_{i-3}^3-\dots-E_1^3=-a,\\
\dots\\
-E_3^3-E_1^3=-a,\\
-E^3_2=-a,\\
-E_1^3=-b.\\
\end{eqnarray*}

From this we get the recursive formula 
\begin{eqnarray*} E_i^3=E_{i-1}^3-E_{i-2}^3,\\
-E^3_2=-a,\\
-E_1^3=-b.\\
\end{eqnarray*}
Consequently the $E_i^3$ are periodic thus $F_{\alpha,k}^3>0$ for $\alpha\gg 0$ independent of 
$k$.
\end{proof}


Now, we state the {\it second step} as the following lemma which is interesting in its own.

\begin{lemma}\label{nefonrestriction} Let $X$ be a smooth projective variety of dimension $n$. Let $L$ be a line bundle  and $D$ a smooth nef and big divisor on $X$. Suppose that the restriction $L\vert_D$ of $L$ to $D$ is nef and big.  Then, there exist $a_0$ such that for every $m\gg 0$ and $i>1$ we have $h^i(X,L^{ma_0})=0$.
\end{lemma}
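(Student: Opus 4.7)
My plan is to use the restriction exact sequence
$$0 \to L^{ma_0}\otimes\mathcal{O}_X(-D) \to L^{ma_0} \to L^{ma_0}|_D \to 0$$
combined with an iterated Kawamata-Viehweg vanishing on $D$. The long exact sequence reduces the vanishing of $H^i(X,L^{ma_0})$ for $i\geq 2$ to that of $H^i(D,L^{ma_0}|_D)$ for $i\geq 1$ and of $H^i(X,L^{ma_0}\otimes\mathcal{O}_X(-D))$ for $i\geq 2$.

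For the first group I would invoke klt Kawamata-Viehweg on $D$: by Kodaira's lemma applied to the nef and big $\bQ$-divisor $L|_D$, I write $L|_D \sim_\bQ A_\epsilon + \Delta_\epsilon$ with $A_\epsilon$ ample and $\Delta_\epsilon$ effective with SNC support and coefficients smaller than $\epsilon$. Choosing $\epsilon$ of order $1/(ma_0)$ turns $ma_0\Delta_\epsilon$ into a klt boundary on $D$, and for $a_0$ large $ma_0 A_\epsilon - K_D$ is ample, so klt KV yields $H^i(D,(L|_D)^{ma_0})=0$ for $i\geq 1$ and $m\gg 0$.

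For the second group I would iterate: considering
$$0 \to L^{ma_0}\otimes\mathcal{O}_X(-(k+1)D) \to L^{ma_0}\otimes\mathcal{O}_X(-kD) \to (L|_D)^{ma_0}\otimes N_{D/X}^{-k} \to 0,$$
and using that $N_{D/X}=D|_D$ is itself nef and big on $D$ (since $D$ is nef and big on $X$), the same klt KV argument -- now requiring $ma_0 A_\epsilon - k N_{D/X} - K_D$ ample -- kills $H^i(D,(L|_D)^{ma_0}\otimes N_{D/X}^{-k})$ for $i\geq 1$, uniformly in $0\leq k\leq k_0$, once $a_0$ is large relative to $k_0$. Chaining the long exact sequences produces $H^i(X,L^{ma_0})\cong H^i(X,L^{ma_0}\otimes\mathcal{O}_X(-k_0 D))$ for $i\geq 2$. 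I then pick $k_0$ large enough that $k_0 D - ma_0 L$ is nef and big on $X$ (using Kodaira's lemma on $X$ applied to the big and nef $D$); dual Kawamata-Viehweg gives $H^i(X,L^{ma_0}\otimes\mathcal{O}_X(-k_0 D))=0$ for $i<n$, proving the result for $2\leq i\leq n-1$. For $i=n$, Serre duality reduces to $h^0(X,K_X\otimes L^{-ma_0})=0$: any putative section would restrict to an element of $H^0(D,K_D-N_{D/X}-ma_0 L|_D)=0$ (the line bundle being the negative of a big class for $m\gg 0$); an iterated descent along $D$, combined with the bigness of $D$ on $X$ (which forces $H^0(X,K_X-ma_0 L-k D)=0$ for $k\gg 0$), shows that any such section vanishes to arbitrary order on $D$, hence identically.

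The principal obstacle is coordinating the iteration: $a_0$ must be chosen large enough that the klt Kawamata-Viehweg hypothesis on $D$ holds uniformly for every twist $N_{D/X}^{-k}$ with $0\leq k\leq k_0$, where $k_0$ is simultaneously determined by how large a multiple of $D$ is needed to produce a nef and big class on $X$ when $ma_0 L$ is subtracted. The $\bQ$-divisor formalism is indispensable, since neither $ma_0 L|_D - K_D$ nor the intermediate classes are a priori nef, and Kodaira's lemma with arbitrarily small boundary is what lets us bypass this.
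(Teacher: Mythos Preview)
Your approach shares the skeleton of the paper's argument---iterate a restriction sequence along $D$, kill the graded pieces on $D$ by Kawamata--Viehweg, and finish with a vanishing on $X$---but there is a genuine circularity in the order of your choices. You require $a_0$ large relative to $k_0$ so that $ma_0 A_\epsilon - kN_{D/X} - K_D$ stays ample for every $0\le k\le k_0$; yet you then pick $k_0$ so that $k_0 D - ma_0 L$ is nef and big on $X$, which forces $k_0$ to grow with $ma_0$. These two demands are incompatible. (Take for instance $L|_D = D|_D$: the first condition gives $k_0 < ma_0 - c$, the second gives $k_0 > ma_0$.) There is also a secondary problem: with $D$ only nef and big and $L$ arbitrary on $X$, $k_0 D - ma_0 L$ need not become nef for \emph{any} $k_0$, since a curve $C$ with $D\cdot C=0$ and $L\cdot C>0$ gives a fixed negative intersection.

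The paper avoids both issues by applying Serre duality on $X$ \emph{before} iterating, reducing to $h^j(X,K_X - mL)=0$ for $j\le n-2$. The relevant exact sequence then becomes
\[
0\longrightarrow (K_X-mL-(i-1)D)\big|_D\longrightarrow (K_X-mL)\big|_{iD}\longrightarrow (K_X-mL)\big|_{(i-1)D}\longrightarrow 0,
\]
and vanishing of the left term needs $(mL-K_X+iD)|_D$ nef and big. Here the twist by $iD|_D$ \emph{adds} positivity rather than subtracting it, so the induction on $i$ runs for all $i\ge 1$ with no upper cut-off; one may then, for each fixed $m$, choose $i$ as large as necessary for the final vanishing on $X$. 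In other words, Serre duality is not cosmetic: it is precisely what reverses the sign of the $D$-twist and breaks your circular dependence between $a_0$ and $k_0$. Your separate treatment of $i=n$ is also rendered unnecessary by this move.
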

\begin{proof}  Let $K_X$ be the canonical line bundle of $X$. By Serre duality it suffices to show that $h^j(X,L^{-{ma_0}}+K_X)=0$ for $j\leq n-2$ and $m>0$ sufficiently big.

By hypotheses,  we may suppose that $a_0$ is so big that both line bundles $(L^{ma_0} - K_X)\vert_D$ are nef and big on $D$. Thus, by Kawamata--Viehweg vanishing theorem,  $h^j(D,L^{-ma_0} + K_X)=0$, as soon as $m>0$. In the sequel we will denote by $L$ the line bundle $L^{a_0}$. 

Consider the exact sequence
$$0\longrightarrow (L^{-m}+K_X-(i-1)D)\vert_D\longrightarrow L^{-m}+K_X\vert_{iD}\longrightarrow  L^{-m}+K_X\vert_{(i-1)D}\longrightarrow 0.$$
Since, for every $i>0$ and $m>1$, the line bundle $(L^m-K_X+iD)\vert_D$ is nef and big, again by Kawamata--Viehweg vanishing theorem, for every $j<n-1$ we have $h^j(D, L^{-m}+K_X-iD)=0$. Consequently, by induction on $i$ we have $h^j(iD, L^{-m}+K_X)=0$.

Fix $m>0$. We can take $i$ so big that $L^m-K_X+iD$ is nef and big on $X$ ($i$ will depend on $m$ in general). 

Kawamata--Viehweg vanishing theorem applied to $L^m-K_X+iD$ and the long exact sequence of cohomology of the exact sequence
$$0\longrightarrow L^{-m}+K_X-iD\longrightarrow L^{-m}+K_X\longrightarrow (L^{-m}+K_X)\vert_{mD}\longrightarrow 0$$ 
allow to conclude.
\end{proof}

The {\it third step} is
\begin{lemma}
Suppose that $D$ is a generic  global section of (a power of) $H$.  Let $D_k$ be the strict transform of $D$ in $X_k$. Then, the restriction of $F_{\alpha,k}$ to $D_k$ is nef and big. 
\end{lemma}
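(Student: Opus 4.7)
I take $D\in|m_0H|$ a generic smooth member for $m_0\gg 0$ chosen so that $|m_0H|$ is very ample. Since $C_i = E_i\cap Z_i$ is a section of the $\bP^1$-bundle $E_i\to C_{i-1}$, while $D_i\cap E_i$ is (generically) a disjoint union of fibres of this bundle (over the finite set $D_{i-1}\cap C_{i-1}$), by induction on $i$ the strict transform $D_i$ does not contain $C_i$ and meets it transversally at finitely many points. Hence each $D_{i+1}\to D_i$ is a point blow-up, $D_k\to D$ is an iterated point blow-up, and $f_k^\ast D = D_k$ on $X_k$, so $D_k\sim m_0 H$ as Cartier divisors.

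The blow-up tree of $D_k\to D$ is very rigid: over each point $p\in D\cap C_0$ (there are $N_1 = m_0 H\cdot C_0$ such points), one obtains a linear chain $\bar f_{1,p},\ldots,\bar f_{k,p}$ of exceptional curves, with consecutive ones meeting transversally at a single point, $\bar f_{\ell,p}^2 = -2$ for $\ell<k$, and $\bar f_{k,p}^2 = -1$. (This follows because $C_i$ is a section of $E_i\to C_{i-1}$ meeting each fibre in exactly one point, and the transversality of $Z_\sigma$ to $E_\sigma$ along $C_\sigma$ forces the relevant next blow-up point to lie on the newest exceptional.) A direct pullback computation gives
\[
\sum_{i=1}^k E_i\Big|_{D_k} \;=\; \sum_{p}\sum_{\ell=1}^k \ell\,\bar f_{\ell,p}.
\]
For nefness, testing $F_{\alpha,k}$ against each $\bar f_{\ell,p}$ (using $H\cdot\bar f_{\ell,p}=0$) yields, by a telescoping argument on the chain,
\[
(\textstyle{\sum_{\ell'}\ell'\,\bar f_{\ell',p}})\cdot \bar f_{\ell,p} \;=\; (\ell-1)+\ell\cdot(-2)+(\ell+1) \;=\; 0 \quad (1<\ell<k),
\]
with the endpoint values giving $0$ at $\ell=1$ and $-1$ at $\ell=k$. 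Hence $F_{\alpha,k}\cdot\bar f_{\ell,p} = 0$ for $\ell<k$ and $=1$ for $\ell=k$. For a non-exceptional curve $\gamma\subset D_k$ mapping to an irreducible $\gamma_0\subset X$, we have $H\cdot\gamma>0$ while $\sum_i E_i\cdot\gamma\le C\cdot H\cdot\gamma_0$, with $C$ depending only on $D$ and the foliation (but not on $k$): the summand $E_i\cdot\gamma$ equals the number of intersection points of the strict transform $\gamma_0^{X_{i-1}}$ with $C_{i-1}$, and the strict transform separates from the successive centres after a bounded number of stages (controlled by the contact order of $\gamma_0$ with the leaf $Z$ along $C_0$). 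Taking $\alpha\ge C$ gives nefness on all curves of $D_k$.

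For bigness, $D_k\sim m_0 H$ yields
\[
(F_{\alpha,k}|_{D_k})^2 \;=\; m_0\,H\cdot F_{\alpha,k}^2 \;=\; m_0\alpha^2k^{4/3}\,H^3 \;+\; O(k),
\]
where the error comes from the triple intersections $H\cdot E_i^2 = r_0$ and $H\cdot E_iE_j$, both controlled by the identities of Step 1. For $\alpha$ large enough (independent of $k$), this is positive. Together with nefness, $F_{\alpha,k}|_{D_k}$ is nef and big. The main obstacle is deriving the explicit formula for $\sum_i E_i|_{D_k}$ and verifying the telescoping cancellation on each chain: this rests entirely on the combinatorial rigidity coming from $C_i$ being a section of $E_i\to C_{i-1}$, together with the transversality of the leaf $Z_\sigma$ to the exceptional divisor $E_\sigma$ along $C_\sigma$.
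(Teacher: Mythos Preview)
Your computations on the exceptional side are correct: the chain structure of the $\bar f_{\ell,p}$, the formula $\sum_i E_i|_{D_k}=\sum_p\sum_\ell \ell\,\bar f_{\ell,p}$, the telescoping intersection giving $F_{\alpha,k}\cdot\bar f_{\ell,p}\in\{0,1\}$, and the self-intersection estimate for bigness all check out. The genuine gap is the nefness test on non-exceptional curves. You assert a constant $C$, independent of both $k$ and $\gamma_0$, with $\sum_i E_i\cdot\gamma\le C\,(H\cdot\gamma_0)$, and justify it by saying the strict transform separates from the successive centres ``after a bounded number of stages (controlled by the contact order of $\gamma_0$ with the leaf $Z$)''. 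But the contact order \emph{is} $\sum_i E_i\cdot\gamma$; your sentence only says this number is finite for each $\gamma_0$, not that it is uniformly bounded by a multiple of $H\cdot\gamma_0$. When the leaf $Z$ is transcendental (the case of interest), $Z_D=Z\cap D$ is a transcendental analytic branch, and a dimension count on $|dH|$ produces algebraic curves $\gamma_0$ of degree $d$ with $I_p(\gamma_0,Z_D)$ of order $d^2$, so $I_p(\gamma_0,Z_D)/(H\cdot\gamma_0)\to\infty$. Your uniform $C$ therefore does not exist in general.

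This is exactly where the paper invokes outside input. Restricting $\mathcal F$ to $D$ gives a foliation $\mathcal F_D$ on a surface whose singular points are the $p_j\in C\cap D$; after Seidenberg these are reduced (canonical), and the tower $D_k\to D$ is precisely the tower of point blow-ups along the branch $b=Z_D$ in McQuillan's Theorem~\ref{blo}. That theorem gives, for every $\ell$, that a multiple of $\alpha\ell^{-1/2}H-\mathcal E_\ell$ is globally generated off the other exceptional curves, hence $\mathcal E_\ell\cdot\gamma\le \alpha\ell^{-1/2}(H\cdot\gamma_0)$ for every non-exceptional $\gamma$. Summing, $\sum_{\ell\le k}E_\ell\cdot\gamma\le 2\alpha k^{1/2}(H\cdot\gamma_0)\le \alpha' k^{2/3}(H\cdot\gamma_0)$, which is the inequality you need. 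In other words, the step you treat as elementary is the whole content of the surface theorem; without it your argument does not close.
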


\begin{proof}
Let $\mathcal{F}_D$ be the restriction of the foliation $\mathcal{F}$ to $D$. Let $p_1,\dots p_r$ be the intersection points of $D$ with $C$. They are singular points for the foliation $\mathcal{F}_D$. Let $Z_D$ be the intersection of $Z$ with $D$. It is a leaf of the foliation $\mathcal{F}_D$. We may perform a tower of blow ups $(D_k, (E_D)_k, p_k, (Z_D)_k)$  of $D$ on the points $p_j$ similar to the tower we performed on $X$. 
Since $D$ is transverse to the curve $C$, the strict transform of $D$ in $X_k$ is exactly $D_k$ and the intersection of $E_k$ with $D_k$ is $(E_D)_k$. Since Seidenberg's desingularization process is obtained by blow up, as soon as $k$ is sufficently big, we may suppose that the restriction of the foliation to $D_k$ is with reduced singularities on the points $p_k$. 

The restriction of $F_{\alpha, k}$ to $D_k$ is then nef and big as soon as $\alpha$ is sufficiently big by Theorem \ref{blo}.
\end{proof}

Finally, the {\it fourth step} is the proof of  Proposition \ref{effective1}.

\begin{proof}[Proposition \ref{effective1}]
The Euler characteristic $\chi(F_{\alpha,k})$ is positive because $F_{\alpha,k}$ has positive self intersection. Since, the restriction of $F_{\alpha,k}$ to the nef and big smooth divisor $D_k$ is nef and big, then $h^i(X_k, F_{\alpha,k}^m)=0$ for $m\gg 0$ and $i>1$. Thus, as soon as $k$  and $m$ are sufficiently big, we have $h^0(X_k, F_{\alpha,k}^m)>0$.
\end{proof}

\begin{remark} (a) We proved a little bit more: for  $\alpha$ and $k$ sufficiently big, the line bundle ${{\alpha}\over{k^{1/3}}}H-E_k$ is a big divisor.

(b) If one wants to avoid the Seidenberg theorem on resolution of singularities of foliations on surfaces, one can remark that in the case of logarithmic simple singularities, we may suppose that $D$ intersect the curve $C$ properly and only on smooth points, we may suppose that the foliation $\mathcal{F}_D$ has reduced singularities on $D$. 

\end{remark}

As a corollary of theorem \ref{effectiveandblowups} we obtain the following desingularization statement.
\begin{corollary}\label{limite}
In the situation above, consider a Zariski-dense holomorphic map $f:\bC^2 \to X$ and its (meromorphic) liftings $f_k:\bC^2 \to X_k$ with the associated currents $[\Phi]^{(k)}$. Then
$$\lim_{k \to + \infty} [\Phi]^{(k)}.E_k= 0.$$
\end{corollary}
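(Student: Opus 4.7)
The strategy is to combine the effectivity statement of Theorem \ref{effectiveandblowups} with the positivity Lemma \ref{pos} applied to two divisors on $X_k$: the effective divisor $m_k(\tfrac{\alpha}{k^{1/3}}H - E_k)$ and the exceptional divisor $E_k$ itself. This will trap $[\Phi]^{(k)}\cdot E_k$ between $0$ and $\tfrac{\alpha}{k^{1/3}}[\Phi]^{(k)}\cdot H$. Hence it suffices to exhibit a uniform (in $k$) bound on $[\Phi]^{(k)}\cdot H$.

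\textbf{Step 1 (Compatibility and uniform bound on $[\Phi]^{(k)}\cdot H$).} Since $f = f_{k,0}\circ f_k$, for any $(1,1)$-form $\eta$ on $X_0$ we have $f_k^{\ast}f_{k,0}^{\ast}\eta = f^{\ast}\eta$, so
$$T_{f_k,r}(f_{k,0}^{\ast}\eta) = T_{f,r}(\eta).$$
Normalizing each $\Phi_r^{(k)}$ by the common denominator $T_{f,r}(\omega)$ (where $\omega$ is the fixed K\"ahler form on $X_0$), one constructs the currents $[\Phi]^{(k)}$ exactly as in Section~2, after extracting a diagonal subsequence $r_n\to\infty$ valid for all $k$ simultaneously. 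The identity above passes to the limit and gives
$$[\Phi]^{(k)}\cdot f_{k,0}^{\ast}\eta_0 \;=\; [\Phi]^{(0)}\cdot \eta_0.$$
Applied with $\eta_0$ representing the class $H_0$ on $X_0$, this yields $[\Phi]^{(k)}\cdot H = [\Phi]^{(0)}\cdot H_0 =: C$, a constant independent of $k$.

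\textbf{Step 2 (Zariski density persists under lifting).} Since $f_{k,0}:X_k\to X_0$ is a composition of blow-ups, it is surjective. If $\overline{f_k(\bC^2)}$ were a proper subvariety $Y\subsetneq X_k$, then $\overline{f(\bC^2)} \subseteq f_{k,0}(Y) \subsetneq X_0$, contradicting the Zariski density of $f$. Therefore $f_k(\bC^2)$ is Zariski dense in $X_k$, so it is contained neither in $E_k$ nor in the support of any proper effective divisor on $X_k$.

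\textbf{Step 3 (Applying the effectivity and positivity).} By Theorem \ref{effectiveandblowups}, for each $k$ there exists $m_k\gg 0$ such that the divisor $D_k := m_k\bigl(\tfrac{\alpha}{k^{1/3}}H - E_k\bigr)$ is linearly equivalent to an effective divisor on $X_k$. By Step 2, neither $\mathrm{Supp}(D_k)$ nor $E_k$ contains $f_k(\bC^2)$, so Lemma \ref{pos} gives
$$[\Phi]^{(k)}\cdot D_k \;\geq\; 0 \qquad\text{and}\qquad [\Phi]^{(k)}\cdot E_k \;\geq\; 0.$$
Dividing the first inequality by $m_k$ and combining with Step 1,
$$0 \;\leq\; [\Phi]^{(k)}\cdot E_k \;\leq\; \frac{\alpha}{k^{1/3}}\,[\Phi]^{(k)}\cdot H \;=\; \frac{\alpha C}{k^{1/3}},$$
which tends to $0$ as $k\to\infty$, proving the corollary.

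\textbf{Main obstacle.} Steps 2 and 3 are essentially automatic given Theorem \ref{effectiveandblowups}; the only genuinely delicate point is Step 1, namely the setup of compatible normalizations across the whole tower $(X_k)_{k\geq 0}$. One must take the same normalizing quantity $T_{f,r}(\omega)$ downstairs and perform a diagonal extraction so that $\Phi^{(k)}_{r_n}$ converges on every $X_k$. The projection-formula compatibility $[\Phi]^{(k)}\cdot f_{k,0}^{\ast}\eta_0 = [\Phi]^{(0)}\cdot\eta_0$ is the crucial ingredient that converts the ``$k^{-1/3}$'' decay in Theorem \ref{effectiveandblowups} into a decay of $[\Phi]^{(k)}\cdot E_k$, and without it the bound in Step 3 would be meaningless.
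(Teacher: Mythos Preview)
Your proof is correct and follows exactly the same route as the paper: sandwich $[\Phi]^{(k)}\cdot E_k$ between $0$ and $\tfrac{\alpha}{k^{1/3}}[\Phi]\cdot H$ using Theorem \ref{effectiveandblowups} and Lemma \ref{pos}, then let $k\to\infty$. The paper compresses this into two lines, leaving implicit precisely the points you spell out in Steps 1 and 2 (the projection-formula compatibility $[\Phi]^{(k)}\cdot f_{k,0}^*H=[\Phi]\cdot H$ and the persistence of Zariski density under lifting), so your expansion is a faithful elaboration rather than an alternative argument.
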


\begin{proof}
From  Theorem \ref{effectiveandblowups}, we have
$$ 0 \leq [\Phi]^{(k)}.E_k \leq \frac{\alpha}{k^{1/3}}  [\Phi].H,$$
and we let $k$ tend to infinity.
\end{proof}

\subsection{Degeneracy for canonical foliations}
Let $(X,\mathcal{F}, E)$ be a foliated threefold with canonical singularities adapted to a normal crossing divisor $E$. Thanks to \cite{Can} (see also \cite{CeMo}) we have a list of the local formal models of these singularities:
\begin{itemize}
\item the logarithmic case: the model is
$$\omega=\left(\prod_{i=1}^r z_i \right) \sum_{i=1}^r \lambda_i\frac{dz_i}{z_i};$$
\item the resonant case: the model is
$$\omega=\left(\prod_{i=1}^r z_i^{p_i+1} \right) \left(\sum_{i=1}^r \lambda_i\frac{dz_i}{z_i}+d\left(\frac{1}{z_1^{p_1}\dots z_r^{p_r}}\right) \right);$$
\end{itemize} 

where $r$ is the dimensional type of the foliation. 

In this setting, we want to prove the following 
\begin{theorem}
$$[\Phi].K_\mathcal{F}^{-1} \geq 0.$$
\end{theorem}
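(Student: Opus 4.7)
The strategy is to reduce to the log simple case handled in Theorem \ref{canineq} (equivalently Theorem B) by treating the two local models of canonical singularities separately. The logarithmic model is precisely the logarithmic simple type of (\ref{eq:logsing}), so Theorem \ref{canineq} applies directly and yields the desired inequality whenever only logarithmic singularities are present. The heart of the matter is thus the \emph{resonant} case, where the exact-differential term $d\!\left(1/(z_1^{p_1}\cdots z_r^{p_r})\right)$ obstructs a direct application of the logarithmic tautological inequality. The plan is to remove this obstruction through an infinite tower of blow-ups, using the desingularization of currents from Corollary \ref{limite} to absorb the resulting error in the limit.

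Concretely, for each irreducible curve $C\subset\mathrm{Sing}\,\mathcal{F}$ carrying a resonant singularity, I would choose a smooth invariant leaf $Z\supset C$ (provided by one of the coordinate hypersurfaces in the local model) and perform the tower $\pi_k\colon X_k\to X$ from Section~7.1 along $C_k=E_k\cdot Z_k$. Denote by $\mathcal{F}_k$ the lifted foliation and by $f_k\colon\bC^2\dashrightarrow X_k$ the meromorphic lift of $f$, with associated current $[\Phi]^{(k)}$. Exploiting Cano's description of how the resonant model transforms under such blow-ups---the resonance invariants decreasing along the tower, while the singularities of $\mathcal{F}_k$ outside a neighborhood of $C_k$ become logarithmic simple---Theorem \ref{canineq} applied on $X_k$, together with Proposition \ref{Edoesntcount} to absorb the logarithmic divisor, should yield an inequality of the form
$$[\Phi]^{(k)}.K_{\mathcal{F}_k}^{-1}\ge -\lambda_k\,[\Phi]^{(k)}.E_k,$$
with $\lambda_k$ bounded uniformly in $k$ by the original exponents $p_i$ of the resonant model.

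To transfer this back to $X$, canonicity of $(X,\mathcal{F})$ gives
$$K_{\mathcal{F}_k}=\pi_k^*K_{\mathcal{F}}+\sum_{i=1}^k a_iE_i,\qquad a_i\ge 0,$$
and the Zariski density of $f$ combined with Lemma \ref{pos} guarantees $[\Phi]^{(k)}.E_i\ge 0$ for every $i$. Using the projection formula $(\pi_k)_\ast[\Phi]^{(k)}=[\Phi]$, the previous inequality rewrites as
$$[\Phi].K_{\mathcal{F}}^{-1}\;=\;[\Phi]^{(k)}.K_{\mathcal{F}_k}^{-1}+\sum_{i=1}^k a_i\,[\Phi]^{(k)}.E_i\;\ge\;-\lambda_k\,[\Phi]^{(k)}.E_k.$$
By Corollary \ref{limite}, $[\Phi]^{(k)}.E_k\to 0$ as $k\to\infty$, hence $[\Phi].K_{\mathcal{F}}^{-1}\ge 0$.

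The main obstacle I anticipate is establishing the error-term inequality at level $k$ with $\lambda_k$ controlled uniformly. This requires a careful local computation showing that, after one blow-up along $C_k$, the exact-differential contribution in the resonant model produces only a bounded error and that this error does not accumulate along the iteration. This is precisely why Theorem \ref{effectiveandblowups} is proved with the refined rate $\alpha/k^{1/3}$: even should $\lambda_k$ exhibit polynomial growth in $k$, the desingularization statement of Corollary \ref{limite} would still force the error to vanish in the limit, completing the argument.
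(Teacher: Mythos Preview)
Your global architecture---iterate blow-ups, use canonicity to pass from $K_{\mathcal{F}_k}$ back to $\pi_k^*K_{\mathcal{F}}$, and kill the residual error via Corollary \ref{limite}---is exactly the paper's, and your final chain of inequalities is correct. The genuine gap is in how you propose to obtain the level-$k$ inequality $[\Phi]^{(k)}.K_{\mathcal{F}_k}^{-1}\ge -\lambda_k\,[\Phi]^{(k)}.E_k$. You want to apply Theorem \ref{canineq} on $X_k$, claiming that the blow-ups render the singularities of $\mathcal{F}_k$ logarithmic simple away from $C_k$. This is false: the saddle-node normal forms are stable under blow-ups in smooth invariant centers (this is precisely why the iteration never terminates), so Theorem \ref{canineq} is not applicable at any finite stage. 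Your hedge in the last paragraph is also unsafe: Corollary \ref{limite} only gives decay of order $k^{-1/3}$, so polynomial growth of $\lambda_k$ would not be absorbed.

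The paper obtains the level-$k$ inequality by a different mechanism that bypasses Theorem \ref{canineq} entirely. One works inside $X_1=\bP(T_X^*(\log E))$ and considers the graph $\tilde{X}\subset X_1$ of the foliation, for which $\mathcal{O}_{X_1}(-1)|_{\tilde{X}}=\pi^*K_{\mathcal{F}}^{-1}\otimes\mathcal{O}(E_0)$ with $E_0$ the exceptional locus of $\tilde{X}\to X$. The logarithmic tautological inequality (Theorem \ref{logtaut}) together with Proposition \ref{Edoesntcount} then yields $[\Phi].K_{\mathcal{F}}^{-1}\ge -[\Phi_1].E_0$ with no hypothesis on the singularity type. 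The case analysis of the four 3-dimensional normal forms (log simple, resonant simple, and the two saddle-node types) enters only to compute $\tilde{X}$ locally: it is smooth in the first two cases, while in the saddle-node cases it is the blow-up of $X$ in a non-reduced ideal $(z_1^{p_1},z_3)$ or $(z_1^{p_1}z_2^{p_2},z_3)$, hence resolved by exactly $p_1$ (resp.\ $p_1+p_2$) blow-ups of $X$ along smooth curves. This gives $[\Phi_1].E_0\le\sum[\Phi^j].E^j$ with a bounded number of summands. Since the formal model is unchanged after resolution, the number of blow-ups at each subsequent stage is the same, so the total tower has depth linear in $n$ and Corollary \ref{limite} applies directly.
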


\begin{proof}
We consider $X_1:= \bP(\overset{2} \bigwedge  T_X(-\log(E))) \cong \bP(T_X^*(\log(E)))$ with its projection $\pi: X_1 \to X$ and look at the graph of the foliation $\tilde{X} \subset X_1$. Let us look at the singularities that may appear above $Sing \mathcal{F}$. We concentrate on the $3$-dimensional type, since for the $2$-dimensional type we recover the same properties as in dimension $2$ studied in \cite{McQ0}.
From the list above, locally at $3$-dimensional type singularities, we have

\begin{itemize}
\item logarithmic simple singularities given by:
$$z_1z_2z_3\left(\sum_{i=1}^3 \lambda_i \frac{dz_i}{z_i}\right),$$
where $\frac{\lambda_i}{\lambda_j} \not \in \bQ_{<0}.$

\item resonant simple singularities given by:
$$z_1z_2z_3 \left(z_1^{p_1}z_2^{p_2}z_3^{p_3}\sum_{i=1}^3 \lambda_i \frac{dz_i}{z_i}-\sum_{i=1}^3 p_i \frac{dz_i}{z_i}\right),$$
where $p_1p_2p_3 \neq 0.$

\item resonant saddle-node simple singularities given by
$$z_1z_2z_3 \left(z_1^{p_1}z_2^{p_2}\sum_{i=1}^3 \lambda_i \frac{dz_i}{z_i}-\sum_{i=1}^2 p_i \frac{dz_i}{z_i}\right),$$
where $\lambda_3p_1p_2 \neq 0$.

\item logarithmic saddle-node simple singularities given by
$$z_1z_2z_3 \left(z_1^{p_1}\sum_{i=1}^3 \lambda_i \frac{dz_i}{z_i}- p_1 \frac{dz_1}{z_1}\right),$$
where $p_1 \neq 0$, $\lambda_2\lambda_3\neq 0.$
\end{itemize}
As the foliation is adapted to $E$ we can suppose that $z_1,z_2$ correspond to algebraic components of $E$. 
We see that in the case of logarithmic simple singularities and resonant simple singularities $\tilde{X}$ is smooth above $Sing \mathcal{F}$.

In the case of logarithmic saddle-node simple singularities, we see that $\tilde{X}$ is singular above the $z_2$- axis: it is the blow-up in the non-reduced ideal $(z_1^{p_1}, z_3)$. So if we blow-up $X$ successively $p_1$ times along the curves corresponding to the strict transforms of the $z_2$- axis, we get a resolution $X^1 \to \tilde{X}$ of $\tilde{X}$.

In the case of resonant saddle-node simple singularities, we see that $\tilde{X}$ is singular above the curve which is the union of the $z_2$- axis and the $z_1$- axis: it is the blow-up in the non-reduced ideal $(z_1^{p_1}z_2^{p_2}, z_3)$. So if we blow-up $X$ successively $p_1$ times along the curves corresponding to the strict transforms of the $z_2$- axis and $p_2$ times along the curves corresponding to the strict transforms of the $z_1$- axis, we get a resolution $X^1 \to \tilde{X}$ of $\tilde{X}$.

One notes that the resolution just described depends only on the formal model of the foliation.

We have
$$\mathcal{O}_{X_1}(-1)_{|\tilde{X}}=\pi^*K_{\mathcal{F}}^{-1}\otimes \mathcal{O}(E_0),$$
where $E_0$ is the total exceptional divisor, above the intersection of the analytic leaf with $E$, on $\tilde{X}$ seen as a blow up along (possibly non-reduced) curves as we have seen.

So, the logarithmic tautological inequality and lemma \ref{Edoesntcount} imply
$$[\Phi].K_\mathcal{F}^{-1}\geq -[\Phi_1].E_0.$$

The discussion above gives a procedure to obtain a resolution $X^1$ of $\tilde{X}$ by blowing up $X$ successively along curves. Moreover, $[\Phi_1].E_0 \leq \sum [\Phi^k].E^k$ where the $E^k$ are the exceptional divisors coming from the successive blow-ups defining  $X^1$.

On $X^1$ we get a foliation $\mathcal{F}^1$ and again
$$[\Phi]^{(1)}.K_{\mathcal{F}^1}^{-1}\geq -[\Phi_1]^{(1)}.E_1.$$
We obtain by the same argument a resolution $X^2$ of the graph of $\mathcal{F}^1$ and by induction we get $X^n$, $\mathcal{F}^n$ and
$$[\Phi]^{(n)}.K_{\mathcal{F}^n}^{-1}\geq -[\Phi_1]^{(n)}.E_n.$$

But since we blow-up only canonical singularities, we have
$$[\Phi]^{(n)}.K_{\mathcal{F}^n}^{-1}\leq [\Phi].K_\mathcal{F}^{-1}.$$

From the above remark and the fact that at each step the number of blow-ups is the same, we can use corollary \ref{limite} to obtain
$$\lim_{n \to + \infty} [\Phi_1]^{(n)}.E_n= 0,$$
which finishes the proof.

\end{proof}

The last theorem implies Theorem F.

\bigskip

\noindent
{\tt gasbarri@math.unistra.fr}\\
{\tt pacienza@math.unistra.fr}\\
{\tt rousseau@math.unistra.fr}\\
Institut de Recherche Math\'ematique Avanc\'ee\\
Universit\'e de Strasbourg et CNRS\\
7, rue Ren\'e Descartes, 67084 Strasbourg C\'edex - FRANCE

\end{document}